\title{Kalimullin Pair and Semicomputability in $\alpha$-Computability Theory}
  \author{\href{mailto:mmdt@leeds.ac.uk}{D\'avid Natingga}}
  \author{D\'avid Natingga}
\newcommand{\cki}{\omega^{CK}_1}
\newcommand{\adeg}[1]{\mathrm{deg}_{\alpha}({#1})}
\newcommand{\aedeg}[1]{\mathrm{deg}_{\alpha e}({#1})}
\newcommand{\aeop}[1]{\Phi_{#1}}
\newcommand{\asc}{\mathrm{sc}(L_\alpha)} %alpha-semicomputable
\newcommand{\ukpair}[3]{\mathcal{K}_{#1}(#2, #3)}
\newcommand{\kpair}[2]{\mathcal{K}(#1, #2)}
\newcommand{\kpairnt}[2]{\mathcal{K}_{\mathrm{nt}}(#1, #2)}
\newcommand{\kpairmax}[2]{\mathcal{K}_{\mathrm{max}}(#1, #2)}
\newcommand{\hreals}{{}^*\mathbb{R}}
\newcommand{\mrtotalaedegs}{\mathcal{TOT}_{\alpha e}^{\mathrm{mr}}}
\begin{document}

\maketitle

%set the number of sectioning levels that get number and appear in the contents
\setcounter{secnumdepth}{2}
\setcounter{tocdepth}{2}

%\tableofcontents

%\mainmatter

\subsubsection{\Large Abstract}
We generalize some results on semicomputability by Jockusch \cite{jockusch1968semirecursive} to the setting of $\alpha$-Computability Theory.
We define an $\alpha$-Kalimullin pair and show that it is definable in the $\alpha$-enumeration degrees $\mathcal{D}_{\alpha e}$ if the projectum of $\alpha$ is $\alpha^*=\omega$ or if $\alpha$ is an infinite regular cardinal.
Finally using this work on $\alpha$-semicomputability and $\alpha$-Kalimullin pairs we conclude that every nontrivial total $\alpha$-enumeration degree is a join of a maximal $\alpha$-Kalimullin pair if $\alpha$ is an infinite regular cardinal.

\section{$\alpha$-Computability Theory}
$\alpha$-Computability Theory is the study of the definability theory over G\"odel's $L_\alpha$ where $\alpha$ is an admissible ordinal. One can think of equivalent definitions on Turing machines with a transfinite tape and time \cite{koepke2005turing} \cite{koepke2007alpha} \cite{koepke2009ordinal} \cite{koepke_seyfferth2009ordinal} or on generalized register machines \cite{koepke2008register}. Recommended references for this section are \cite{sacks1990higher}, \cite{chong1984techniques}, \cite{maass1978contributions} and \cite{di1983basic}.

Classical Computability Theory is $\alpha$-Computability Theory where $\alpha = \omega$.

\subsection{G\"odel's Constructible Universe}

\begin{defn}\label{defn_goedels_constructible_universe}(G\"odel's Constructible Universe)\\
Define \emph{G\"odel's constructible universe} as $L := \bigcup_{\beta \in \mathrm{Ord}} L_\beta$ where $\gamma, \delta \in \mathrm{Ord}$, $\delta$ is a limit ordinal and:

$L_0 := \emptyset$,

$L_{\gamma + 1} := \mathrm{Def}(L_\gamma):=\{x | x \subseteq L_\gamma $ and $x$ is first-order definable over $L_\gamma\}$,

$L_\delta = \bigcup_{\gamma < \delta} L_\gamma$.
\end{defn}

\subsection{Admissibility}

\begin{defn}(Admissible ordinal\cite{chong1984techniques})\\
An ordinal $\alpha$ is \emph{$\Sigma_1$ admissible} (admissible for short) iff $\alpha$ is a limit ordinal and $L_\alpha$ satisfies $\Sigma_1$-collection:
$\forall \phi(x,y) \in \Sigma_1(L_\alpha). L_\alpha \models \forall u [\forall x \in u \exists y. \phi(x,y) \implies \exists z \forall x \in u \exists y \in z. \phi(x,y)]$ where $L_\alpha$ is the $\alpha$-th level of the G\"odel's Constructible Hierarchy (\cref{defn_goedels_constructible_universe}).
\end{defn}

\begin{eg}(Examples of admissible ordinals \cite{chong1984techniques} \cite{takeuti1965recursive})
\begin{itemize}
\item $\cki$ - Church-Kleene $\omega_1$, the first non-computable ordinal
\item every stable ordinal $\alpha$ (i.e. $L_\alpha \prec_{\Sigma_1} L$), e.g. $\delta^1_2$ - the least ordinal which is not an order type of a $\Delta^1_2$ subset of $\mathbb{N}$, \nth{1} stable ordinal
\item every infinite cardinal in a transitive model of $\mathrm{ZF}$
\end{itemize}
\end{eg}

\subsection{Basic concepts}

\begin{defn}\label{defn_ez_fin}
A set $K \subseteq \alpha$ is \emph{$\alpha$-finite} iff $K \in L_\alpha$.
\end{defn}

\begin{defn}($\alpha$-computability and computable enumerability)
\begin{itemize}
\item A function $f:\alpha \to \alpha$ is \emph{$\alpha$-computable} iff $f$ is $\Sigma_1(L_\alpha)$ definable.
\item A set $A \subseteq \alpha$ is \emph{$\alpha$-computably enumerable} ($\alpha$-c.e.) iff $A \in \Sigma_1(L_\alpha)$.
\item A set $A \subseteq \alpha$ is \emph{$\alpha$-computable} iff $A \in \Delta_1(L_\alpha)$ iff $A \in \Sigma_1(L_\alpha)$ and $\alpha-A \in \Sigma_1(L_\alpha)$.
\end{itemize}
\end{defn}

\begin{prop}\label{prop_bij_alpha_to_l_alpha}\cite{chong1984techniques}
There exists a $\Sigma_1(L_\alpha)$-definable bijection $b:\alpha \to L_\alpha$.
\qed
\end{prop}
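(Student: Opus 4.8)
The plan is to realize $b$ as the inverse of the order-rank function of Gödel's canonical well-ordering of $L_\alpha$, once I establish that this well-ordering has order type exactly $\alpha$. First I would invoke the canonical well-ordering $<_L$ of $L$, defined by recursion on levels: $x <_L y$ holds when $x$ first enters the hierarchy at an earlier level than $y$, or at the same level but via a smaller (formula, parameter-tuple) witness for its definability over the previous level, using a fixed well-ordering of such witnesses. The restriction of $<_L$ to $L_\alpha$ is the standard $\Sigma_1(L_\alpha)$ (indeed uniformly $\Delta_1$) well-ordering, since the facts ``$x \in L_{\beta+1} = \mathrm{Def}(L_\beta)$'' and the comparison of witnesses are all verifiable inside some $L_\gamma$ with $\gamma < \alpha$.

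The structural heart of the argument is the observation that every proper initial segment of $<_L$ on $L_\alpha$ is $\alpha$-finite. If $x$ first enters at level $\beta+1$, then each $<_L$-predecessor of $x$ lies in $L_{\beta+1}$, and since the predecessor set $\{z : z <_L x\}$ is first-order definable over $L_{\beta+1}$ (using $x$ and the restricted order as parameters) it belongs to $L_{\beta+2} \subseteq L_\alpha$; being a subset bounded in this way it is therefore an element of $L_\alpha$, i.e. $\alpha$-finite. With this in hand I would compute the order type $\gamma$ of $<_L$ on $L_\alpha$ by bracketing it from both sides. For the lower bound, the ordinals below $\alpha$ all lie in $L_\alpha$ and, since each $\beta$ enters precisely at level $\beta+1$, are $<_L$-ordered in their natural order, forming a suborder of type $\alpha$; hence $\gamma \ge \alpha$. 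For the upper bound, the rank of any $x$ is the order type of its $\alpha$-finite predecessor set, and by admissibility the collapsing isomorphism of this $\alpha$-finite well-ordering is itself $\alpha$-finite (via $\Sigma_1$-recursion together with $\Sigma_1$-collection), so the rank lies in $L_\alpha$ and is $< \alpha$; as every rank is $< \alpha$ we obtain $\gamma \le \alpha$. Thus $\gamma = \alpha$.

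Finally I would define $b(\xi) = y$ to mean that $y \in L_\alpha$ and there exists an order isomorphism from the predecessor set $\{z : z <_L y\}$ onto $\xi$. This is exactly the inverse of the order-rank function, hence a bijection $\alpha \to L_\alpha$ by the computation $\gamma = \alpha$, and it is $\Sigma_1(L_\alpha)$ because the witnessing isomorphism may be sought among the $\alpha$-finite sets, an existential-over-$L_\alpha$ condition. The hard part will be the upper bound on the order type, which is the single place where admissibility is indispensable: for a non-admissible limit ordinal the predecessor sets remain $\alpha$-finite, yet their order types can exceed $\alpha$ because the relevant collapse escapes $L_\alpha$, and the resulting well-ordering then has order type strictly above $\alpha$. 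It is precisely $\Sigma_1$-collection that forces every such collapse back into $L_\alpha$ and pins the order type at $\alpha$, so the definability of $b$ is inseparable from the admissibility hypothesis.
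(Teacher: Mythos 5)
The paper itself offers no proof of this proposition---it is stated with a citation to Chong's book and a \qed{}---so the only comparison available is with the standard argument in that literature, and yours is exactly that argument, carried out correctly: restrict the canonical $\Sigma_1$ (uniformly $\Delta_1$) well-ordering $<_L$ to $L_\alpha$, observe that every proper initial segment is $\alpha$-finite, pin the order type at $\alpha$ by embedding the ordinals below $\alpha$ for the lower bound and by using admissibility ($\Sigma_1$-recursion plus collection) to collapse each $\alpha$-finite initial segment inside $L_\alpha$ for the upper bound, and then read off $b$ as the inverse of the rank function, whose graph is $\Sigma_1$ since the witnessing isomorphism and the bounding level $L_\beta$ can be existentially quantified over $L_\alpha$. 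You also correctly identify the upper bound as the sole essential use of admissibility; your closing remark is accurate, since for a non-admissible limit such as $\alpha=\omega\cdot 2$ the order type of $<_L$ on $L_\alpha$ really does exceed $\alpha$.
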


Let $K_\gamma$ denote an $\alpha$-finite set $b(\gamma)$. The next proposition establishes that we can also index pairs and other finite vectors from $\alpha^n$ by an index in $\alpha$.

\begin{prop}\label{prop_bij_to_n_fold_product}\cite{maass1978contributions}
For every $n$, there is a $\Sigma_1$-definable bijection $p_n$:$\alpha \to \alpha \times \alpha \times ... \times \alpha$ (n-fold product).
\qed
\end{prop}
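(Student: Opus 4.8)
The plan is to reduce the general statement to the case $n=2$ by induction on $n$, and then to construct an explicit $\Sigma_1$-definable bijection $p_2 \colon \alpha \to \alpha \times \alpha$ from a canonical well-ordering of pairs of ordinals. For the inductive step, suppose $p_n \colon \alpha \to \alpha^n$ is a $\Sigma_1$-definable bijection. Then I would define $p_{n+1} \colon \alpha \to \alpha^{n+1}$ by splitting off one coordinate with $p_2$ and refining the remaining coordinate with $p_n$: writing $p_2(\gamma) = (\xi, \eta)$, set $p_{n+1}(\gamma) := (p_n(\xi), \eta)$, read as an element of $\alpha^n \times \alpha = \alpha^{n+1}$. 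Since a composition of $\Sigma_1$-definable bijections is again a $\Sigma_1$-definable bijection and each projection is $\Sigma_1$, the map $p_{n+1}$ is a $\Sigma_1$-definable bijection, with base case $p_1 = \mathrm{id}_\alpha$. So everything rests on the case $n=2$.

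For $n=2$, I would use the G\"odel pairing well-order $<^*$ on $\alpha \times \alpha$, defined by $(\xi_1,\eta_1) <^* (\xi_2,\eta_2)$ iff $\max(\xi_1,\eta_1) < \max(\xi_2,\eta_2)$, or these maxima agree and $(\xi_1,\eta_1)$ precedes $(\xi_2,\eta_2)$ lexicographically. This relation is $\Delta_0$ (hence $\Sigma_1$) over $L_\alpha$, and it is a well-ordering of $\alpha \times \alpha$. I would then let $p_2^{-1}(\xi,\eta)$ be the $<^*$-rank of $(\xi,\eta)$, i.e.\ the order type of the set of its $<^*$-predecessors; equivalently, $p_2^{-1}$ is the transitive-collapse (Mostowski) map of $(\alpha \times \alpha, <^*)$. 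That this rank function is $\Sigma_1$-definable is exactly where admissibility is used: $\Sigma_1$-collection in $L_\alpha$ gives $\Sigma_1$-recursion along the $\Delta_0$ well-founded relation $<^*$, so ``$p_2^{-1}(\xi,\eta) = \zeta$'' can be expressed by asserting the existence of an $\alpha$-finite order isomorphism from the predecessors of $(\xi,\eta)$ onto $\zeta$.

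The key facts to verify are that $p_2^{-1}$ is total with range exactly $\alpha$. For any $\xi, \eta < \alpha$, the set $P_{\xi,\eta}$ of $<^*$-predecessors of $(\xi,\eta)$ is contained in $\beta \times \beta$ for $\beta = \max(\xi,\eta)+1 < \alpha$, so $P_{\xi,\eta} \in L_\alpha$ is $\alpha$-finite; since $L_\alpha$ computes the order type of any of its well-ordered members as an ordinal $< \alpha$, the rank $p_2^{-1}(\xi,\eta)$ is a well-defined ordinal below $\alpha$. This gives totality and range $\subseteq \alpha$, so the range is an initial segment of $\alpha$. Conversely, $p_2^{-1}(\xi,0)$ is unbounded in $\alpha$ as $\xi \to \alpha$ (since $\xi \le p_2^{-1}(\xi,0)$), so the range is unbounded, hence equal to $\alpha$. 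Therefore $p_2^{-1}$ is a $\Sigma_1$-definable bijection $\alpha \times \alpha \to \alpha$, and its inverse $p_2$ is $\Sigma_1$-definable as well, since $p_2(\gamma) = (\xi,\eta)$ iff $p_2^{-1}(\xi,\eta) = \gamma$ and the graph of a $\Sigma_1$-definable function is $\Sigma_1$.

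I expect the main obstacle to be the closure of $\alpha$ under G\"odel pairing together with the $\Sigma_1$-definability of the rank function — that is, showing the order type of $(\alpha \times \alpha, <^*)$ is exactly $\alpha$ and that the collapse map is $\Sigma_1$ rather than merely $\Sigma_2$. Both hinge on $\Sigma_1$-collection in $L_\alpha$: admissibility is precisely what keeps the predecessor sets $\alpha$-finite with order types bounded below $\alpha$, and what licenses the $\Sigma_1$-recursion defining the collapse. (The alternative route via the bijection $b \colon \alpha \to L_\alpha$ of \cref{prop_bij_alpha_to_l_alpha} and Kuratowski pairing $L_\alpha \times L_\alpha \to L_\alpha$ only yields an injection $\alpha \times \alpha \to \alpha$; repairing surjectivity would require re-indexing the image, which recreates essentially the same recursion, so I would prefer the direct G\"odel-pairing construction.)
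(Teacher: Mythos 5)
Your proof is correct. The paper itself gives no argument for this proposition --- it is quoted from Maass \cite{maass1978contributions} with a bare citation and \qed{} --- and your construction (the G\"odel ordering of $\alpha \times \alpha$ by maximum then lexicographically, collapsed via $\Sigma_1$-recursion, followed by induction on $n$) is exactly the standard proof behind that citation, with the two essential uses of admissibility correctly isolated: each predecessor set is $\alpha$-finite with order type below $\alpha$, and the rank/collapse map is $\Sigma_1$-definable rather than merely $\Sigma_2$.
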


Similarly, we can index $\alpha$-c.e., $\alpha$-computable sets by an index in $\alpha$.
Let $W_e$ denote an $\alpha$-c.e. set with an index $e < \alpha$.

\begin{prop}\label{prop_alpha_finite_union}($\alpha$-finite union of $\alpha$-finite sets\footnote{From \cite{sacks1990higher} p162.})\\
$\alpha$-finite union of $\alpha$-finite sets is $\alpha$-finite, i.e. if $K \in L_\gamma$, then $\bigcup_{\gamma \in K} K_\gamma \in L_\alpha$.
\qed
\end{prop}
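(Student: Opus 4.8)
The plan is to reduce the statement to an application of $\Sigma_1$-collection, which is precisely the form of admissibility available to us. First I would read the hypothesis as the assertion that $K$ is $\alpha$-finite (i.e. $K \in L_\alpha$ with $K \subseteq \alpha$ an $\alpha$-finite set of indices), the bound variable $\gamma$ in the union ranging over $K$. Recall from \cref{prop_bij_alpha_to_l_alpha} that $b : \alpha \to L_\alpha$ is a $\Sigma_1(L_\alpha)$ bijection, so the assignment $\gamma \mapsto K_\gamma = b(\gamma)$ is a total $\Sigma_1$-definable function; being the graph of a total function, it is in fact $\Delta_1(L_\alpha)$.

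The key step is to produce a single level of the constructible hierarchy simultaneously containing every $K_\gamma$ for $\gamma \in K$. For each fixed $\gamma$ we have $K_\gamma = b(\gamma) \in L_\alpha$, so there is some $\beta < \alpha$ with $K_\gamma \in L_\beta$; since membership in the constructible levels is uniformly $\Delta_1$ and $\gamma \mapsto b(\gamma)$ is $\Delta_1$, the predicate ``$\exists \beta.\, b(\gamma) \in L_\beta$'' is $\Sigma_1$ in $\gamma$. Applying $\Sigma_1$-collection to the $\alpha$-finite set $K$ then yields an $\alpha$-finite $z$ bounding the witnesses, and taking $\delta = \sup(z \cap \mathrm{Ord})$ gives a single $\delta < \alpha$ with $K_\gamma \in L_\delta$ for all $\gamma \in K$. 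Here $\delta < \alpha$ is automatic, since an $\alpha$-finite set of ordinals cannot be cofinal in $\alpha$.

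Once the uniform bound is in hand the rest is bookkeeping. The union $\bigcup_{\gamma \in K} K_\gamma$ is a subset of $L_\delta$, defined by $\{x : \exists \gamma \in K.\, x \in K_\gamma\}$, a predicate that is $\Delta_1$ (indeed the existential quantifier is bounded by the $\alpha$-finite $K$). Enlarging $\delta$ if necessary so that $K \in L_\delta$ as well, $\Delta_1$-separation --- a consequence of $\Sigma_1$-collection together with $\Delta_0$-separation in the admissible structure $L_\alpha$ --- shows that this union is a subset of $L_\delta$ lying in $L_{\delta+1} \subseteq L_\alpha$, hence is $\alpha$-finite. (Alternatively, one can first collect $\{K_\gamma : \gamma \in K\}$ into $L_\alpha$ by $\Sigma_1$-replacement and then invoke the union axiom, which holds in $L_\alpha$.)

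The main obstacle, and the only place admissibility is essential, is the collection step: without it the bounds $\beta$ attached to the individual $K_\gamma$ could be cofinal in $\alpha$, and the union would in general escape $L_\alpha$. I would therefore concentrate the care on verifying that the predicate to which collection is applied is genuinely $\Sigma_1$, so that the admissibility of $\alpha$ legitimately applies; the separation and union steps afterwards are routine closure properties of $L_\alpha$.
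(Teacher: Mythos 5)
Your proof is correct. The paper itself offers no argument for this proposition --- it is stated with a \qed and a citation to Sacks --- and what you have written is exactly the standard proof that citation points to: read the hypothesis as $K \in L_\alpha$ (the ``$K \in L_\gamma$'' in the statement is a typo), apply $\Sigma_1$-collection to the $\Sigma_1$ predicate ``$b(\gamma) \in L_\beta$'' to obtain a single level $L_\delta$ with $\delta < \alpha$ containing every $K_\gamma$ for $\gamma \in K$, then finish by $\Delta_1$-separation (or, equivalently, $\Sigma_1$-replacement plus the union axiom in $L_\alpha$). You correctly identify the collection step as the only point where admissibility is used, and your justification that $\delta < \alpha$ (ordinals of a set lying in $L_\alpha$ are bounded below $\alpha$) is sound, so there is nothing to fill in.
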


\subsection{Enumeration reducibility}
The generalization of the enumeration reducibility corresponds to two different notions - weak $\alpha$-enumeration reducibility and $\alpha$-enumeration reducibility.

\begin{defn}(Weak $\alpha$-enumeration reducibility)\\
$A$ is weakly $\alpha$-enumeration reducible to $B$ denoted as $A \le_{w\alpha e} B$ iff $\exists \Phi \in \Sigma_1(L_\alpha)$ st
$\Phi(B) = \{x < \alpha : \exists \delta < \alpha [ \langle x, \delta \rangle \in \Phi \land K_\delta \subseteq B]\}$.
The set $\Phi$ is called a weak $\alpha$-enumeration operator.
\end{defn}

\begin{defn}\label{defn_ae_reducibility}($\alpha$-enumeration reducibility)\\
$A$ is $\alpha$-enumeration reducible to $B$ denoted as $A \le_{\alpha e} B$ iff $\exists W \in \Sigma_1(L_\alpha)$ st
$\forall \gamma < \alpha [ K_\gamma \subseteq A \iff \exists \delta < \alpha [ \langle \gamma, \delta \rangle \in W \land K_\delta \subseteq B]]$.

Denote the fact that $A$ reduces to $B$ via $W$ as $A=W(B)$.
\end{defn}

\begin{fact}(Transitivity)\\
The $\alpha$-enumeration reducibility $\le_{\alpha e}$ is transitive.
But in general the weak $\alpha$-enumeration reducibility is not transitive.
\end{fact}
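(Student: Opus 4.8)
The plan is to treat the two assertions separately: a direct composition argument settles transitivity of $\le_{\alpha e}$, while for $\le_{w\alpha e}$ I would isolate precisely where the same recipe breaks and then realize that break as a genuine counterexample. For transitivity, suppose $A = W(B)$ and $B = V(C)$ with $W, V \in \Sigma_1(L_\alpha)$ as in \cref{defn_ae_reducibility}. I would define the composite operator
\[
U := \{\langle \gamma, \epsilon\rangle : \exists \delta < \alpha\,(\langle \gamma, \delta\rangle \in W \land \langle \delta, \epsilon\rangle \in V)\}
\]
and claim $A = U(C)$. Since $U$ is an existential projection of a conjunction of two $\Sigma_1(L_\alpha)$ relations it is again $\Sigma_1(L_\alpha)$ (using the $\Sigma_1$-definable pairing of \cref{prop_bij_to_n_fold_product}), and crucially this step is purely arithmetical and names no oracle. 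It then remains to verify $K_\gamma \subseteq A \iff \exists \epsilon\,(\langle \gamma, \epsilon\rangle \in U \land K_\epsilon \subseteq C)$ for every $\gamma < \alpha$. Left to right, a $\delta$ with $\langle\gamma,\delta\rangle\in W$ and $K_\delta\subseteq B$ yields, through $B=V(C)$, an $\epsilon$ with $\langle\delta,\epsilon\rangle\in V$ and $K_\epsilon\subseteq C$, so $\langle\gamma,\epsilon\rangle\in U$. Right to left, from $\langle\gamma,\epsilon\rangle\in U$ and $K_\epsilon\subseteq C$ I extract $\delta$; then $\langle\delta,\epsilon\rangle\in V$ with $K_\epsilon\subseteq C$ forces $K_\delta\subseteq B$ by $B=V(C)$, whence $\langle\gamma,\delta\rangle\in W$ with $K_\delta\subseteq B$ forces $K_\gamma\subseteq A$ by $A=W(B)$. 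This direction uses only the defining biconditionals, invoking no collection principle, so transitivity of $\le_{\alpha e}$ in fact holds over any limit $\alpha$.

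Next I would show why the identical recipe fails for $\le_{w\alpha e}$. Composing $A=\Phi(B)$ and $B=\Psi(C)$, an element $x\in A$ is certified by a single $\alpha$-finite $K_\delta\subseteq B$, and each $y\in K_\delta$ is in turn certified by some $\alpha$-finite $K_{\eta_y}\subseteq C$. To build one weak-operator certificate $K_\zeta\subseteq C$ for $x$, one must amalgamate the $K_{\eta_y}$ over $y\in K_\delta$ into a single $\alpha$-finite set. This is exactly a $\Sigma_1$-collection demand on the formula $\langle y,\eta\rangle\in\Psi \land K_\eta\subseteq C$; but the conjunct $K_\eta\subseteq C$ is \emph{not} a $\Sigma_1(L_\alpha)$ formula, since membership in the oracle $C$ is not $L_\alpha$-definable. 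Hence admissibility does not license the amalgamation, and \cref{prop_alpha_finite_union} cannot be applied because we never obtain an $\alpha$-finite index set to union over. This pinpoints the obstruction: unlike $U$ above, the natural composite weak operator need not certify membership by a single $\alpha$-finite slice of $C$.

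The remaining task, and the genuine difficulty, is to realize this obstruction as an actual failure for some admissible $\alpha$ and sets $A,B,C$. Here one must avoid the trap that an $\alpha$-c.e.\ $A$ reduces weakly to every $C$ by an operator ignoring $C$; so $A$ must be chosen non-$\alpha$-c.e.\ and genuinely dependent on the oracle. I would work at an admissible $\alpha$ whose $\Sigma_1$ structure defeats oracle collection — concretely one carrying a non-regular $\alpha$-c.e.\ set, or an $\alpha$-finite set cofinal in $\alpha$ (e.g.\ below the projectum when $\alpha^* < \alpha$) — and build $A,B,C$ together by diagonalization. Using a column presentation with $i\in B \iff \exists j\,\langle i,j\rangle\in C$ gives $B\le_{w\alpha e} C$ for free, and letting the witnessing sets for $A$ index a cofinal family of columns gives $A\le_{w\alpha e} B$; the construction then spreads the column witnesses $j$ cofinally so that every prospective composite certificate $K_\zeta\subseteq C$ would have to meet cofinally many columns and hence fail to be $\alpha$-finite, meeting each requirement $\Phi_e(C)\neq A$ in turn. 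Verifying that the two single-step reductions persist while no $\Sigma_1$ operator $\Phi_e$ succeeds — that the spreading can defeat all candidates simultaneously — is the main obstacle. Consistent with the qualifier ``in general,'' this must collapse at $\alpha=\omega$, where $\alpha$-finite means finite and the amalgamation is automatic, so the counterexample genuinely requires the higher-$\alpha$ failure of collection over the oracle.
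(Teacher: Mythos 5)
The paper states this as an unproved fact (it is quoted from the literature), so there is no in-paper argument to measure you against; your proposal must stand on its own, and it splits into a correct half and an incomplete half. The transitivity half is correct and complete: the composite operator $U$ is $\Sigma_1(L_\alpha)$ by closure under conjunction and existential quantification, and your verification of $A = U(C)$ correctly exploits the one feature that separates $\le_{\alpha e}$ from $\le_{w\alpha e}$, namely that the biconditional in \cref{defn_ae_reducibility} characterizes $K_\delta \subseteq B$ for \emph{every} $\alpha$-finite set $K_\delta$, so certificates compose one-for-one and no amalgamation (hence no collection over an oracle-dependent formula) is ever invoked. Your remark that this needs no admissibility is also right.

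The second assertion, however, you have not proved. Your obstruction analysis is accurate and consistent with the paper's machinery: composing weak operators requires collecting, for each $y \in K_\delta$, a certificate $K_{\eta_y} \subseteq C$ into one $\alpha$-finite set, and the relevant formula is $\Sigma_1(L_\alpha, C^+)$ rather than $\Sigma_1(L_\alpha)$, which is exactly why megaregularity appears in \cref{prop_correspondence_wae_ae}. Indeed, \cref{prop_correspondence_wae_ae} together with \cref{fact_wae_ae_implies_wae} shows that any counterexample \emph{must} have $C^+$ non-megaregular (otherwise $B \le_{w\alpha e} C$ upgrades to $B \le_{\alpha e} C$ and transitivity follows), and hence $C \not\in \Sigma_1(L_\alpha)$ by \cref{cor_mr_deg_invariance}(v) --- a sharper necessary condition than the one you isolate (that $A$ be non-$\alpha$-c.e.). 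But explaining why the naive composition fails is not the same as exhibiting sets for which \emph{no} weak operator succeeds: your counterexample is a plan, and you yourself flag its essential verification (that the ``spreading'' defeats every $\Sigma_1$ operator simultaneously while both single-step reductions survive) as an unresolved obstacle. Until that diagonalization is carried out against a non-megaregular $C$, the non-transitivity claim remains unproved. One detail in the sketch is also wrong: there is no ``$\alpha$-finite set cofinal in $\alpha$'' --- every $\alpha$-finite subset of $\alpha$ is bounded below $\alpha$, a fact the paper itself uses in \cref{rk_b_ordering_computable}; the pathology you actually need is a $\Sigma_1(L_\alpha)$-definable cofinal map from some $\gamma < \alpha$ into $\alpha$ (equivalently a failure of megaregularity of the oracle), not an unbounded $\alpha$-finite set.
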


\begin{lemma}\label{lemma_join_ce_reduction}
$A \le_{\alpha e} B \oplus C \land B \in \Sigma_1(L_\alpha) \implies A \le_{\alpha e} C$
\qed
\end{lemma}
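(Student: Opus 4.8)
The plan is to absorb the $\alpha$-c.e. set $B$ directly into the reduction operator, mirroring the classical fact that $A \le_e B \oplus C$ with $B$ c.e.\ yields $A \le_e C$. Fix a witness $W \in \Sigma_1(L_\alpha)$ to $A \le_{\alpha e} B \oplus C$ as in \cref{defn_ae_reducibility}, and fix the coding underlying the join, say $B \oplus C = \{\langle 0, \beta\rangle : \beta \in B\} \cup \{\langle 1, \beta\rangle : \beta \in C\}$ for the $\Sigma_1$-definable pairing $\langle\cdot,\cdot\rangle$ from \cref{prop_bij_to_n_fold_product}.

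First I would set up the decoding of an $\alpha$-finite condition $K_\delta$ into its two components. Using the pairing, define $K_\delta^B := \{\beta : \langle 0, \beta\rangle \in K_\delta\}$ and $K_\delta^C := \{\beta : \langle 1, \beta\rangle \in K_\delta\}$; since the pairing is $\Sigma_1$-definable and $K_\delta$ is $\alpha$-finite, both components are $\alpha$-finite and the index maps $\delta \mapsto \delta^B$ and $\delta \mapsto \delta^C$ (their indices under the bijection $b$) are $\alpha$-computable. The key observation is that $K_\delta \subseteq B \oplus C$ holds iff $K_\delta$ consists only of pairs $\langle 0, \beta\rangle$ and $\langle 1, \beta\rangle$ (well-formedness, an $\alpha$-computable condition on the $\alpha$-finite set $K_\delta$) and moreover $K_\delta^B \subseteq B$ and $K_\delta^C \subseteq C$.

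Then I would define the new operator $W' := \{\langle \gamma, \delta^C\rangle : \langle\gamma, \delta\rangle \in W \land K_\delta \text{ is well-formed} \land K_\delta^B \subseteq B\}$ and claim it witnesses $A \le_{\alpha e} C$. For the forward direction: if $K_\gamma \subseteq A$ then some $\langle\gamma,\delta\rangle \in W$ has $K_\delta \subseteq B\oplus C$, so by the splitting $K_\delta^B \subseteq B$ and $K_\delta^C \subseteq C$, whence $\langle\gamma,\delta^C\rangle \in W'$ and $K_{\delta^C} \subseteq C$. For the converse: any $\langle\gamma,\epsilon\rangle \in W'$ with $K_\epsilon \subseteq C$ arises from some $\delta$ with $\epsilon = \delta^C$, with $K_\delta^B \subseteq B$ and $K_\delta$ well-formed, so the splitting reassembles $K_\delta \subseteq B \oplus C$ with $\langle\gamma,\delta\rangle \in W$, giving $K_\gamma \subseteq A$.

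The main thing to verify, and the step I expect to carry the real content, is that $W' \in \Sigma_1(L_\alpha)$. The outer existential over $\delta$ preserves $\Sigma_1$, the operator $W$ and the decoding maps are $\Sigma_1$, and well-formedness is $\alpha$-computable; the one genuinely nontrivial conjunct is $K_\delta^B \subseteq B$, i.e.\ $\forall \beta \in K_\delta^B.\ \beta \in B$. Since $B \in \Sigma_1(L_\alpha)$, this is a bounded universal quantifier over an $\alpha$-finite set applied to a $\Sigma_1$ predicate, and its $\Sigma_1$-ness is exactly where admissibility enters: $\Sigma_1$-collection guarantees that $\Sigma_1$ relations are closed under bounded universal quantification over $\alpha$-finite sets. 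Granting this closure, $W'$ is $\Sigma_1$ and the two directions above complete the proof.
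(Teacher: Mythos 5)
Your proof is correct; note that the paper itself gives no proof of this lemma (it is stated with the proof omitted as immediate), so there is nothing of the author's to compare against beyond checking your argument, which holds up. Both directions of the verification that $W'$ witnesses $A \le_{\alpha e} C$ are sound, and you correctly isolate the one point of real content: the conjunct $K_\delta^B \subseteq B$ is a universal quantifier bounded by an $\alpha$-finite set applied to the $\Sigma_1$ predicate $B$, and the fact that this remains $\Sigma_1$ is exactly what admissibility ($\Sigma_1$-collection) delivers --- this is the only place where the hypothesis $B \in \Sigma_1(L_\alpha)$ and the admissibility of $\alpha$ enter. Two minor remarks. First, you introduce your own pairing-based coding of the join, whereas the paper defines $A \oplus B := \{2a : a \in A\} \cup \{2b+1 : b \in B\}$; the argument is insensitive to this choice, with parity playing the role of your well-formedness condition (indeed, under the paper's coding every ordinal is either even or odd, so that conjunct becomes vacuous). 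Second, the $\alpha$-finiteness of the decoded components $K_\delta^B$, $K_\delta^C$ and the $\alpha$-computability of the index maps $\delta \mapsto \delta^B$, $\delta \mapsto \delta^C$, which you argue by hand from admissibility, are precisely the content of \cref{lemma_alpha_computable_indexing_of_computed_sets}(iii), which you could cite instead of reproving.
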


\begin{fact}\label{fact_wae_ae_implies_wae}
If $A \le_{w \alpha e} B$ and $B \le_{\alpha e} C$, then $A \le_{w \alpha e} C$.
\end{fact}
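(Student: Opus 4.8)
The plan is to compose the two operators directly, eliminating the intermediate set $B$. First I would unfold the hypotheses. Since $A \le_{w\alpha e} B$ there is a weak $\alpha$-enumeration operator $\Phi \in \Sigma_1(L_\alpha)$ with $x \in A \iff \exists \delta < \alpha [\langle x, \delta \rangle \in \Phi \land K_\delta \subseteq B]$, and since $B \le_{\alpha e} C$ there is $W \in \Sigma_1(L_\alpha)$ with $K_\gamma \subseteq B \iff \exists \epsilon < \alpha [\langle \gamma, \epsilon \rangle \in W \land K_\epsilon \subseteq C]$ for every $\gamma < \alpha$. The crucial observation is that the condition ``$K_\delta \subseteq B$'' occurring inside the weak reduction is precisely the instance with $\gamma = \delta$ of the left-hand side of the $\alpha$-enumeration reducibility $B \le_{\alpha e} C$, so it may be replaced by a statement mentioning $C$ alone. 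Note that it is exactly because the first reduction is only \emph{weak} (it controls individual elements $x \in A$, not $\alpha$-finite subsets $K_\gamma \subseteq A$) that the conclusion can only be weak reducibility.

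Carrying this out, I would define the composite operator
$$\Psi := \{\langle x, \epsilon \rangle : \exists \delta < \alpha [\langle x, \delta \rangle \in \Phi \land \langle \delta, \epsilon \rangle \in W]\}.$$
Substituting the characterisation of $K_\delta \subseteq B$ into the definition of $A$ yields $x \in A \iff \exists \delta \exists \epsilon [\langle x, \delta \rangle \in \Phi \land \langle \delta, \epsilon \rangle \in W \land K_\epsilon \subseteq C]$, and unfolding $\Psi(C) = \{x : \exists \epsilon [\langle x, \epsilon \rangle \in \Psi \land K_\epsilon \subseteq C]\}$ shows this set equals $A$. Both directions of the equality $A = \Psi(C)$ use the corresponding direction of the biconditional for $B \le_{\alpha e} C$ together with the definition of $\Phi(B)$, so no extra hypothesis is needed and $A \le_{w\alpha e} C$ via $\Psi$.

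It then remains to confirm that $\Psi$ is genuinely a weak $\alpha$-enumeration operator, i.e. that $\Psi \in \Sigma_1(L_\alpha)$. Membership in $\Phi$ and in $W$ are each $\Sigma_1(L_\alpha)$, their conjunction is $\Sigma_1$, and forming the set of pairs $\langle x, \epsilon \rangle$ is legitimate since the pairing function is $\Sigma_1$-definable by \cref{prop_bij_to_n_fold_product}. I expect the only delicate point to be the verification that projecting out the intermediate index $\delta$ keeps the operator $\Sigma_1$ rather than merely $\Sigma_2$: this is guaranteed precisely because an (unbounded) existential quantifier $\exists \delta < \alpha$ preserves $\Sigma_1$ over an admissible $L_\alpha$. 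This is also the place that explains why the analogous composition does not upgrade to full $\le_{\alpha e}$ — for that one would need $\Psi$ to act uniformly on $\alpha$-finite index sets, the very feature weak reducibility discards.
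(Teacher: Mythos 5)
Your proof is correct. The paper states this as a \texttt{fact} with no proof at all, and your composition argument --- replacing the condition $K_\delta \subseteq B$ inside the weak operator by the biconditional supplied by the full reduction $B \le_{\alpha e} C$, then checking $\Psi \in \Sigma_1(L_\alpha)$ --- is exactly the routine verification the paper leaves implicit, including the correct observation of where the non-weak hypothesis on the second reduction is essential.
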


\subsection{Properties of $\alpha$-enumeration operator}
\begin{fact}\label{lemma_enum_from_aeop}
If $A \subseteq \alpha$, then $\aeop{e}(A) \le_{w\alpha e} A$.
\end{fact}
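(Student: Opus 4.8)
The plan is to build a weak $\alpha$-enumeration operator $\Psi$ directly from the $\alpha$-enumeration operator $\Phi_e$ and to verify that $\Psi$ and $\Phi_e$ produce the same output on $A$. Recall that the action of $\Phi_e$ is governed by whole $\alpha$-finite chunks: an element enters $\Phi_e(A)$ precisely when it lies in some $K_\gamma$ for which there is a $\delta$ with $\langle \gamma, \delta\rangle \in \Phi_e$ and $K_\delta \subseteq A$. A weak operator, by contrast, must enumerate single elements $x$ paired with an $\alpha$-finite ``use'' index $\delta$ (cf.\ the definition of $\le_{w\alpha e}$). So the essential idea is to unpack each output chunk $K_\gamma$ into its individual members and let each such member be enumerated with the very same use index $\delta$ that $\Phi_e$ uses to produce $K_\gamma$.

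Concretely, I would set
\[
\Psi := \{\langle x, \delta\rangle : \exists \gamma < \alpha \,(x \in K_\gamma \land \langle \gamma, \delta\rangle \in \Phi_e)\}.
\]
First I would verify the reduction equation $\Psi(A) = \Phi_e(A)$. Unwinding the definition of the weak operator, $x \in \Psi(A)$ iff $\exists \delta\,(\langle x, \delta\rangle \in \Psi \land K_\delta \subseteq A)$, which by the definition of $\Psi$ is exactly $\exists \gamma, \delta\,(x \in K_\gamma \land \langle \gamma, \delta\rangle \in \Phi_e \land K_\delta \subseteq A)$; this is precisely the condition for $x \in \Phi_e(A)$. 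Both inclusions follow immediately from this equivalence, so the chunk-based output and the element-based output coincide.

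The remaining and only substantive point is that $\Psi \in \Sigma_1(L_\alpha)$, which is what licenses calling $\Psi$ a weak $\alpha$-enumeration operator. Here I would use that $\Phi_e$ is $\Sigma_1(L_\alpha)$ by hypothesis, that the decoding map $\gamma \mapsto K_\gamma = b(\gamma)$ is $\Sigma_1$-definable by \cref{prop_bij_alpha_to_l_alpha}, so that the membership relation $x \in K_\gamma$ is $\Sigma_1(L_\alpha)$, and that pairing $\langle \cdot, \cdot\rangle$ is $\Sigma_1$ by \cref{prop_bij_to_n_fold_product}. Since $\Sigma_1(L_\alpha)$ is closed under conjunction and under existential quantification over elements of $L_\alpha$, the displayed definition of $\Psi$ is $\Sigma_1(L_\alpha)$.

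The main obstacle, such as it is, lies in this last step: one must check that ``unpacking'' $K_\gamma$ into its members does not destroy $\Sigma_1$-definability, i.e.\ that the decoding $\gamma \mapsto K_\gamma$ together with the membership test $x \in K_\gamma$ is uniformly $\Sigma_1$ over $L_\alpha$. This is exactly what the $\Sigma_1$-definable bijection $b : \alpha \to L_\alpha$ of \cref{prop_bij_alpha_to_l_alpha} supplies, so no genuine difficulty arises, and the rest is a routine unwinding of the two definitions.
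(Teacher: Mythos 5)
Your proof is correct: the operator $\Psi$ you define is $\Sigma_1(L_\alpha)$, and unwinding both definitions shows $\Psi(A) = \aeop{e}(A)$, which is exactly what $\le_{w\alpha e}$ requires. The paper states this as a fact with no proof at all, so there is nothing to diverge from; your ``unpack each output chunk $K_\gamma$ into its members, reusing the same use index $\delta$'' construction is precisely the routine argument the paper leaves implicit.
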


\begin{fact}\label{prop_enum_op_monotonicity}(Monotonicity)\\
$\forall e < \alpha \forall A, B \subseteq \alpha [A \subseteq B \implies \aeop{e}(A) \subseteq \aeop{e}(B)]$.
\qed
\end{fact}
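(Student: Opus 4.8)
The plan is to prove \cref{prop_enum_op_monotonicity} by directly unfolding the definition of the action of an $\alpha$-enumeration operator on a set and observing that the only dependence on the oracle enters through a membership condition that is itself monotone. Recall from the definition of the weak $\alpha$-enumeration operator that for any $C \subseteq \alpha$ we have $\aeop{e}(C) = \{x < \alpha : \exists \delta < \alpha [\langle x, \delta \rangle \in \aeop{e} \land K_\delta \subseteq C]\}$. The key observation is that the oracle $C$ appears exclusively inside the clause $K_\delta \subseteq C$, while the other clause $\langle x, \delta \rangle \in \aeop{e}$ is fixed once $e$ is fixed.

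First I would fix $e < \alpha$ and sets $A, B \subseteq \alpha$ with $A \subseteq B$, and take an arbitrary $x \in \aeop{e}(A)$. By the displayed definition there exists $\delta < \alpha$ witnessing this membership, i.e. $\langle x, \delta \rangle \in \aeop{e}$ and $K_\delta \subseteq A$. Since $A \subseteq B$, the inclusion $K_\delta \subseteq A$ yields $K_\delta \subseteq B$ by transitivity of set inclusion, so the same $\delta$ witnesses $x \in \aeop{e}(B)$. As $x$ was arbitrary this gives $\aeop{e}(A) \subseteq \aeop{e}(B)$, and since $e$, $A$, $B$ were arbitrary the universally quantified statement follows.

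There is no genuine obstacle here: the result is an immediate consequence of the fact that the only oracle-dependent clause, $K_\delta \subseteq C$, is preserved under enlarging $C$. The only point requiring care is to apply the correct reading of $\aeop{e}(\cdot)$, namely the enumeration-operator action in which the $\alpha$-finite set $K_\delta$ indexed by $\delta$ must be wholly contained in the oracle, rather than any variant that constrains the oracle more delicately; with that reading the witness $\delta$ transfers verbatim from $A$ to $B$.
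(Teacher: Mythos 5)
Your proof is correct and is precisely the argument the paper intends: the statement is recorded as a fact with the proof omitted as immediate, because the oracle occurs only positively, via the clause $K_\delta \subseteq C$, so any witness for membership relative to $A$ transfers verbatim to $B \supseteq A$. The only cosmetic remark is that the paper elsewhere (in the proof of the witness property) uses the reading $\Phi_e(A) = \bigcup \{K_\gamma : \exists \delta < \alpha [\langle \gamma, \delta \rangle \in W_e \land K_\delta \subseteq A]\}$ rather than the singleton form you quote; your argument goes through unchanged under that reading, with the pair $\langle \gamma, \delta \rangle$ serving as the transferred witness.
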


\begin{prop}\label{prop_enum_op_witness}(Witness property)\\
If $x \in \aeop{e}(A)$, then $\exists K \subseteq A [K \in L_\alpha \land x \in \aeop{e}(K)]$.
\end{prop}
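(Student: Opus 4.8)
The plan is to read the witnessing $\alpha$-finite set directly off the existential quantifier that is already built into the definition of the $\alpha$-enumeration operator, so the proof is essentially a definitional unfolding. Recall that by the (weak) $\alpha$-enumeration operator clause, membership $x \in \aeop{e}(A)$ unfolds to
\[
x \in \aeop{e}(A) \iff \exists \delta < \alpha\,[\,\langle x, \delta \rangle \in \aeop{e} \;\land\; K_\delta \subseteq A\,].
\]
So the very hypothesis $x \in \aeop{e}(A)$ hands us an ordinal $\delta < \alpha$ with $\langle x, \delta \rangle \in \aeop{e}$ and $K_\delta \subseteq A$. The natural candidate for the desired $\alpha$-finite subset is simply $K := K_\delta$.

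First I would fix such a $\delta$ and set $K := K_\delta$. Then I would verify the three required conditions in turn. Membership $K \in L_\alpha$ (i.e. $K$ is $\alpha$-finite in the sense of \cref{defn_ez_fin}) holds because $K_\delta = b(\delta)$ lies in the range of the $\Sigma_1(L_\alpha)$-bijection $b : \alpha \to L_\alpha$ from \cref{prop_bij_alpha_to_l_alpha}, hence $K_\delta \in L_\alpha$. The inclusion $K \subseteq A$ is immediate, since $K = K_\delta \subseteq A$ is exactly the second conjunct we extracted. Finally, to see $x \in \aeop{e}(K)$, I would reuse the \emph{same} $\delta$ as witness: we have $\langle x, \delta \rangle \in \aeop{e}$ and trivially $K_\delta \subseteq K_\delta = K$, so the defining condition for $x \in \aeop{e}(K)$ is satisfied. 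This completes the verification.

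There is no substantive obstacle here; the only point requiring a moment's care is confirming that the code $\delta$ furnished by the existential quantifier really does name an $\alpha$-finite set, which is precisely the content of \cref{prop_bij_alpha_to_l_alpha} together with the convention that $K_\delta := b(\delta)$. As a consistency check one may also note that, since $K \subseteq A$, Monotonicity (\cref{prop_enum_op_monotonicity}) gives $\aeop{e}(K) \subseteq \aeop{e}(A)$, so the witness $K$ we produce cannot enumerate anything outside $\aeop{e}(A)$; this is not needed for the proof but confirms the statement is coherent. The essential idea is that $\alpha$-enumeration operators act through $\alpha$-finite ``neighbourhoods'' $K_\delta$, and any element enumerated from $A$ is already enumerated from one such finite approximation.
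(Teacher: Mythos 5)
Your proof is correct and takes essentially the same approach as the paper: unfold the definitional existential, extract the axiom with its $\alpha$-finite set $K_\delta \subseteq A$, set $K := K_\delta$, and reuse the very same axiom to witness $x \in \aeop{e}(K)$. The only cosmetic difference is that the paper's convention for $\aeop{e}$ uses axioms $\langle \gamma, \delta \rangle$ that enumerate whole $\alpha$-finite blocks $K_\gamma$ (so one first picks $\gamma$ with $x \in K_\gamma$) rather than axioms $\langle x, \delta \rangle$ naming single elements, which adds one unfolding step but changes nothing essential.
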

\begin{proof}
Note $\aeop{e}(A) := \bigcup \{K_\gamma : \exists \delta < \alpha [\langle \gamma, \delta \rangle \in W_e \land K_\delta \subseteq A\}$.
Thus if $x \in \aeop{e}(A)$, then $\exists \gamma < \alpha$ st $x \in K_\gamma$ and so $\exists \delta < \alpha [\langle \gamma, \delta \rangle \in W_e \land K_\delta \subseteq A]$.
Taking $K$ to be $K_\delta$ concludes the proof.
\end{proof}

\subsection{Totality}
\begin{defn}\footnote{From \cite{chong1984techniques} p8.}
The \emph{computable join} of sets $A, B \subseteq \alpha$ denoted $A \oplus B$ is defined to be

$A \oplus B := \{2a : a \in A\} \cup \{2b+1 : b \in B\}$.
\end{defn}

The computable join satisfies the usual properties of the case $\alpha=\omega$.

The generalization of the Turing reducibility corresponds to two different notions - weak $\alpha$ reducibility and $\alpha$ reducibility.

\begin{defn}(Total reducibilities)
\begin{itemize}
\item $A$ is $\alpha$-reducible to $B$ denoted as $A \le_{\alpha} B$ iff $A \oplus \overline{A} \le_{\alpha e} B \oplus \overline{B}$.
\item $A$ is weakly $\alpha$-reducible to $B$ denoted as $A \le_{w\alpha} B$ iff $A \oplus \overline{A} \le_{w\alpha e} B \oplus \overline{B}$.
\end{itemize}
\end{defn}

\begin{defn}(Total set)\\
A subset $A \subseteq \alpha$ is total iff $A \le_{\alpha e} \overline{A}$ iff $A \equiv_{\alpha e} A \oplus \overline{A}$.
\end{defn}

\subsection{$\Sigma_1$-projectum}

\begin{defn}(Projectum\footnote{Definition 1.19 in \cite{chong1984techniques}.})\\
The \emph{$\Sigma_1$ projectum} (projectum for short) of $\alpha$ is\\
$\alpha^* := \mathrm{min}\{\gamma \le \alpha : \exists A \subseteq \gamma [A \in \Sigma_1(L_\alpha) \land A \not\in L_\alpha]\}$.
\end{defn}

\begin{prop}\label{prop_projectum_eq_defs}\footnote{Theorem 1.20 in \cite{chong1984techniques}.}
The following ordinals are equal:\\
i) $\alpha^* := \mathrm{min}\{\gamma < \alpha : \exists A \subseteq \gamma [A \in \Sigma_1(L_\alpha) \land A \not\in L_\alpha]\}$\\
ii) $\mathrm{min}\{\gamma \le \alpha : \exists $ partial surjection $ p_1: \gamma \rightharpoonup \alpha \in \Sigma_1(L_\alpha)\}$\\
iii) $\mathrm{min}\{\gamma \le \alpha : \exists $ total injection $ i: \alpha \rightarrowtail \gamma \in \Sigma_1(L_\alpha)\}$.
\qed
\end{prop}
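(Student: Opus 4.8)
The plan is to show the three ordinals coincide by a cycle of inequalities. Write $\gamma_i,\gamma_{ii},\gamma_{iii}$ for the minima in clauses (i), (ii), (iii). It suffices, for each step of the cycle, to convert a witness living at a level $\gamma$ for one clause into a witness at the \emph{same} level $\gamma$ for the next; chaining the implications $(iii)\Rightarrow(i)\Rightarrow(ii)\Rightarrow(iii)$ then yields $\gamma_i\le\gamma_{iii}$, $\gamma_{ii}\le\gamma_i$, $\gamma_{iii}\le\gamma_{ii}$, hence equality. (The mismatch between the strict bound $\gamma<\alpha$ in (i) and the non-strict $\gamma\le\alpha$ in (ii), (iii) is harmless: $A=\alpha$ is a $\Sigma_1(L_\alpha)$ subset of $\alpha$ not in $L_\alpha$, so every clause is witnessed at $\alpha$ itself, and each minimum is attained at some $\gamma\le\alpha$.) The cheapest step is $(iii)\Rightarrow(i)$. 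Given a total $\Sigma_1$ injection $i:\alpha\rightarrowtail\gamma$, its range $\operatorname{ran}(i)=\{y<\gamma:\exists x<\alpha\,(i(x)=y)\}$ is $\Sigma_1$ and contained in $\gamma$; were it $\alpha$-finite, then $i^{-1}$ (whose graph is that of $i$ with coordinates swapped, hence $\Sigma_1$) would be a $\Sigma_1$ function defined on the $\alpha$-finite set $\operatorname{ran}(i)$ with image all of $\alpha$, and $\Sigma_1$-collection (admissibility) would bound that image below $\alpha$, which is absurd. So $\operatorname{ran}(i)\notin L_\alpha$ and $\gamma_i\le\gamma_{iii}$.

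For $(ii)\Rightarrow(iii)$, suppose $p:\gamma\rightharpoonup\alpha$ is a partial $\Sigma_1$ surjection. Fix a $\Sigma_1$ enumeration $\langle(x_s,\xi_s):s<\alpha\rangle$ of the graph of $p$ and define $i(\xi)=x_s$ for the least $s$ with $\xi_s=\xi$. Every proper initial part of the enumeration is $\alpha$-finite, so the clause ``$\forall s'<s\;\xi_{s'}\ne\xi$'' is a bounded quantification and $i$ has $\Sigma_1$ graph; $i$ is total because $p$ is onto, injective because $p$ is single-valued, and lands in $\gamma$ because each $x_s\in\operatorname{dom}(p)\subseteq\gamma$. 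This first-appearance trick is exactly what keeps the graph $\Sigma_1$, and it gives $\gamma_{iii}\le\gamma_{ii}$.

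The substance lies in $(i)\Rightarrow(ii)$. Fix $\gamma=\gamma_i$ minimal and a $\Sigma_1$ set $A\subseteq\gamma$ with $A\notin L_\alpha$. By minimality, every proper initial segment $A\cap\beta$ (for $\beta<\gamma$) is a $\Sigma_1$ subset of $\beta<\gamma$ and so \emph{must} be $\alpha$-finite. Moreover the collection $R=\{A\cap\beta:\beta<\gamma\}$ cannot be $\alpha$-finite, for otherwise $A=\bigcup R$ would be an $\alpha$-finite union of $\alpha$-finite sets, hence $\alpha$-finite by \cref{prop_alpha_finite_union}, a contradiction. The intended surjection then sends $\beta<\gamma$ to the code (under the $\Sigma_1$ bijection $b$ of \cref{prop_bij_alpha_to_l_alpha}) of the datum $A\cap\beta$: since $R\notin L_\alpha$, its image of codes is a non-$\alpha$-finite subset of $\alpha$, and an unbounded search over the stages by which these initial segments settle is meant to sweep out all of $\alpha$.

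I expect the genuine obstacle to be twofold, and both parts are where $\Sigma_1$-admissibility is spent. First, the naive defining formulas for these maps (``$A\cap\beta$ has stabilised by stage $\sigma$'', i.e. $A\cap\beta\subseteq A_\sigma$) are only $\Pi_1$, because membership in $A$ is merely $\Sigma_1$; to obtain a genuinely $\Sigma_1$ assignment $\beta\mapsto\mathrm{code}(A\cap\beta)$ one needs a regularity/uniformization step producing these codes $\Sigma_1$-uniformly in $\beta$, which is precisely the nontrivial content. Second, granting such a $\Sigma_1$ listing, one still has only a surjection of $\gamma$ onto a non-$\alpha$-finite \emph{subset} of $\alpha$, and promoting this to a surjection onto all of $\alpha$ requires a normalisation (collapsing to order type and arguing cofinality, using $\Sigma_1$-collection to see the stabilisation stages are unbounded) — cofinality alone being strictly weaker than surjectivity for admissible $\alpha$. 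Once both points are settled, composing with $b$ yields a $\Sigma_1$ partial surjection $\gamma\rightharpoonup\alpha$, so $\gamma_{ii}\le\gamma_i$, and the cycle $\gamma_{iii}\le\gamma_{ii}\le\gamma_i\le\gamma_{iii}$ closes, giving the three equalities.
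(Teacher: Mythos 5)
Your cycle structure is sound, and two of its three legs are correct and complete: the passage from (iii) to (i) (the range of a total $\Sigma_1$ injection cannot be $\alpha$-finite, since $\Sigma_1$-collection applied to $\forall y\in\mathrm{ran}(i)\,\exists x\,(i(x)=y)$, together with injectivity, would put all of $\alpha$ inside an $\alpha$-finite set), and the first-appearance inversion giving (ii) to (iii). Your repair of the strict bound in clause (i) is also sensible, and in fact necessary: for $\alpha$ an infinite regular cardinal one has $\alpha^*=\alpha$, so the set in (i) as printed is empty. But note that the paper offers no proof of this proposition at all --- it is quoted from Chong, Theorem 1.20 --- so the entire mathematical substance that a self-contained proof must supply is exactly the leg you do not complete: the implication (i) $\Rightarrow$ (ii), i.e. $\gamma_{ii}\le\gamma_i$. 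There you list two obstacles and then assert the conclusion ``once both points are settled''; neither point is settled, so this is a genuine gap, not a proof.

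Moreover, the route you sketch for that leg cannot be completed as described. We may assume $\gamma_i<\alpha$, since if $\gamma_i=\alpha$ the whole proposition is trivial (the identity map witnesses (ii) and (iii) at level $\alpha$). Under that assumption your first obstacle is not merely unresolved but unresolvable: if the assignment $\beta\mapsto\mathrm{code}(A\cap\beta)$ were $\Sigma_1$, then $x\notin A$ would be equivalent to $\exists\beta>x\,[\,x\notin K_{\mathrm{code}(A\cap\beta)}\,]$, which is $\Sigma_1$; hence $A\in\Delta_1(L_\alpha)$, and a bounded $\Delta_1$ set is $\alpha$-finite, contradicting $A\notin L_\alpha$. (At $\alpha=\omega$ this is the classical fact that a c.e. set whose canonical-index function $n\mapsto A\cap n$ is computable is itself computable.) So no ``uniformization step'' can produce those codes; a correct proof must abandon this map entirely. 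Your second obstacle is equally real: the least-stage function on $A$ does give a $\Sigma_1$ partial map $\gamma\rightharpoonup\alpha$ with cofinal range, but upgrading cofinal to onto is precisely the content of the theorem, not a normalisation. The order-type idea you float founders because the increasing enumeration of a $\Sigma_1$ non-$\Delta_1$ set is never $\Sigma_1$, and the gaps between consecutive points of a cofinal $\Sigma_1$ set can have order type exceeding $\gamma$, so their elements cannot be indexed below $\gamma$ pointwise. The classical argument behind Chong's Theorem 1.20 proceeds through different machinery --- for instance, showing that $\alpha^*$ is a cardinal of $L_\alpha$ (hence closed under G\"odel pairing) and running a $\Sigma_1$ Skolem-function/condensation-style argument to manufacture the surjection --- none of which appears in your sketch. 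In short: the two easy reductions are right, but the load-bearing direction is missing, and the specific plan you propose for it provably cannot work.
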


\begin{prop}\label{prop_indexing_ace_sets_with_proj}(Indexing $\alpha$-c.e. sets with a projectum)\\
We can index all $\alpha$-c.e. sets just with indices from $\alpha^*$.
\qed
\end{prop}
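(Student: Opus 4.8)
The plan is to transport the standard enumeration of the $\alpha$-c.e. sets along a $\Sigma_1(L_\alpha)$ partial surjection furnished by the projectum. Recall that by the usual universal-predicate construction there is a single $\Sigma_1(L_\alpha)$ set $W \subseteq \alpha \times \alpha$ with $W_e = \{x < \alpha : \langle e, x\rangle \in W\}$, and every $\alpha$-c.e. set equals $W_e$ for some $e < \alpha$. The goal is to produce an analogous universal predicate whose first coordinate ranges only over $\alpha^*$.

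First I would invoke \cref{prop_projectum_eq_defs}(ii) to fix a $\Sigma_1(L_\alpha)$ partial surjection $p_1 : \alpha^* \rightharpoonup \alpha$. Because $p_1$ is $\Sigma_1$-definable and single-valued, its graph $\{\langle i, e\rangle : p_1(i) = e\}$ is a $\Sigma_1(L_\alpha)$ relation. I then define the reindexed enumeration by $W'_i := W_{p_1(i)}$ for $i < \alpha^*$ (with $W'_i$ empty where $p_1(i)$ is undefined), via the predicate
\[
W' := \{\langle i, x\rangle : \exists e < \alpha\, [\,p_1(i) = e \ \wedge\ \langle e, x\rangle \in W\,]\}.
\]

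Two things then have to be checked. For coverage, since $p_1$ is onto $\alpha$, every index $e < \alpha$ equals $p_1(i)$ for some $i < \alpha^*$, so every $W_e$ — and hence every $\alpha$-c.e. set — occurs as some $W'_i$ with $i < \alpha^*$. For uniformity, $W'$ is obtained from the two $\Sigma_1(L_\alpha)$ relations $\mathrm{graph}(p_1)$ and $W$ by a conjunction followed by an existential quantifier over $e < \alpha$; since $\Sigma_1(L_\alpha)$ is closed under conjunction and existential quantification, $W'$ is itself $\Sigma_1(L_\alpha)$. Thus each $W'_i = \{x : \langle i,x\rangle \in W'\}$ is $\alpha$-c.e., and $\{W'_i\}_{i<\alpha^*}$ enumerates exactly the $\alpha$-c.e. sets, which is the desired indexing.

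The only real subtlety — the part I would treat most carefully — is the claim that the composite predicate $W'$ stays $\Sigma_1$. This rests on the graph of a $\Sigma_1$-definable partial function being $\Sigma_1$ and on the existential quantifier harmlessly absorbing the intermediate index $e$; no appeal to admissibility ($\Sigma_1$-collection) is actually needed here, only the basic closure properties of $\Sigma_1$ formulas. Everything else is bookkeeping.
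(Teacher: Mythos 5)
Your proposal is correct. The paper in fact states this proposition with its proof omitted, and your argument is the standard one that fills that gap: fix the universal enumeration $W_e$ ($e<\alpha$), pull it back along the $\Sigma_1(L_\alpha)$ partial surjection $p_1:\alpha^*\rightharpoonup\alpha$ furnished by \cref{prop_projectum_eq_defs}(ii), and observe that the composite predicate $W'$ stays $\Sigma_1(L_\alpha)$ (closure under conjunction and unbounded existential quantification, no collection needed) while surjectivity of $p_1$ gives coverage -- exactly the uniform reindexing the paper later relies on in \cref{thm_kalimullin_thm2.5}.
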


%Degree theory
\subsection{Degree Theory}

\begin{defn}(Degrees)
\begin{itemize}
\item $\mathcal{D}_\alpha := \powerset{\alpha}/\equiv_\alpha$ is a set of \emph{$\alpha$-degrees}.
\item $\mathcal{D}_{\alpha e} := \powerset{\alpha}/\equiv_{\alpha e}$ is a set of \emph{$\alpha$-enumeration} degrees.
\end{itemize}
Induce $\le$ on $\mathcal{D}_\alpha$ and $\mathcal{D}_{\alpha e}$ by $\le_\alpha$ and $\le_{\alpha e}$ respectively.
\end{defn}

\begin{fact}(Embedding of the total degrees)\\
$\langle \mathcal{D}_\alpha, \le \rangle$ embeds into $\langle \mathcal{D}_{\alpha e}, \le \rangle$ via $\iota:\mathcal{D}_\alpha \hookrightarrow \mathcal{D}_{\alpha e}$, $A \mapsto A \oplus \overline{A}$.
\end{fact}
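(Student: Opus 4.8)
The plan is to check the two conditions that make $\iota$ an order embedding --- well-definedness on degree classes and the order-preserving/reflecting biconditional --- and then to observe that injectivity comes for free. Writing $\adeg{A}$ for the $\alpha$-degree of $A$ and $\aedeg{X}$ for the $\alpha$-enumeration degree of $X$, the map is given on representatives by $\iota(\adeg{A}) := \aedeg{A \oplus \overline{A}}$. The key point is that essentially all the work has already been absorbed into the definition of $\le_\alpha$, which is stated precisely as $A \le_\alpha B \iff A \oplus \overline{A} \le_{\alpha e} B \oplus \overline{B}$.

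First I would verify that $\iota$ is well defined, i.e.\ independent of the chosen representative. Suppose $\adeg{A} = \adeg{B}$, so $A \equiv_\alpha B$, meaning $A \le_\alpha B$ and $B \le_\alpha A$. Unfolding each via the definition of total $\alpha$-reducibility gives $A \oplus \overline{A} \le_{\alpha e} B \oplus \overline{B}$ together with $B \oplus \overline{B} \le_{\alpha e} A \oplus \overline{A}$, i.e.\ $A \oplus \overline{A} \equiv_{\alpha e} B \oplus \overline{B}$, so that $\aedeg{A \oplus \overline{A}} = \aedeg{B \oplus \overline{B}}$ and $\iota$ is well defined.

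Next I would establish the embedding property itself. An order embedding requires $\adeg{A} \le \adeg{B} \iff \iota(\adeg{A}) \le \iota(\adeg{B})$, which reads as $A \le_\alpha B \iff A \oplus \overline{A} \le_{\alpha e} B \oplus \overline{B}$. Both directions hold by simply reading off the definition of $\le_\alpha$, so no further argument is needed. Injectivity is then automatic: if $\iota(\adeg{A}) = \iota(\adeg{B})$ then $A \oplus \overline{A} \equiv_{\alpha e} B \oplus \overline{B}$, and the reflecting ($\Leftarrow$) direction yields $A \le_\alpha B$ and $B \le_\alpha A$, hence $\adeg{A} = \adeg{B}$.

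The main obstacle is really that there is no obstacle: all the content sits inside the definition of $\le_\alpha$ in terms of $\le_{\alpha e}$, and the argument is a routine unfolding. The one point genuinely worth confirming --- implicit in the earlier phrase ``Induce $\le$ on $\mathcal{D}_\alpha$ and $\mathcal{D}_{\alpha e}$'' --- is that $\le_\alpha$ and $\le_{\alpha e}$ each respect their own equivalence classes, the standard fact that a reflexive and transitive reducibility induces a genuine partial order on its quotient. Once that is granted for both preorders, the computations above go through verbatim.
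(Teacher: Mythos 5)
Your proof is correct and matches the paper's (implicit) reasoning: the paper states this as a \emph{fact} with no written proof precisely because, as you observe, the definition $A \le_\alpha B \iff A \oplus \overline{A} \le_{\alpha e} B \oplus \overline{B}$ makes well-definedness, order-preservation/reflection, and injectivity all immediate unfoldings. Nothing is missing; your added remark that $\le_\alpha$ and $\le_{\alpha e}$ induce genuine partial orders on the quotients is the only background fact needed, and the paper supplies it via transitivity of $\le_{\alpha e}$.
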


\begin{defn}(Total degrees)\\
Let $\iota:\mathcal{D}_\alpha \hookrightarrow \mathcal{D}_{\alpha e}$ be the embedding from above.
The total $\alpha$-enumeration degrees $\mathcal{TOT}_{\alpha e}$ are the image of $\iota$, i.e. $\mathcal{TOT}_{\alpha e} := \iota[\mathcal{D}_\alpha]$.
\end{defn}

\begin{prop}\label{cor_unboundedness_of_ae_degs}(Unboundedness of $\alpha$-enumeration degrees)
For every set $A \subseteq \alpha$, there is a set $B \subseteq \alpha$ st $A <_{\alpha e} B$.
\qed
\end{prop}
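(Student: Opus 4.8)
The plan is to produce a concrete set lying strictly above $A$, namely an $\alpha$-enumeration jump of $A$, and to prove strictness by a self-referential diagonalization of exactly the classical shape. Fix the $\Sigma_1(L_\alpha)$-indexing of the $\alpha$-enumeration operators $\{\aeop{e}\}_{e<\alpha}$ coming from the indexing of $\alpha$-c.e.\ sets (\cref{prop_indexing_ace_sets_with_proj}, together with the pairing of \cref{prop_bij_to_n_fold_product}), and set
$$D_A := \{e < \alpha : e \in \aeop{e}(A)\}, \qquad \aejump{A} := D_A \oplus \overline{D_A}.$$
I will take $B := \aejump{A}$ and show both $A \le_{\alpha e} B$ and $B \not\le_{\alpha e} A$, which gives $A <_{\alpha e} B$ as required.

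First I would establish $A \le_{\alpha e} D_A$. Using the explicit form $\aeop{e}(A)=\bigcup\{K_\gamma : \exists\delta<\alpha[\langle\gamma,\delta\rangle\in W_e \land K_\delta\subseteq A]\}$ from the proof of \cref{prop_enum_op_witness}, the relation ``$e\in\aeop{e}(A)$'' is uniformly $\Sigma_1$ in $e$, so $D_A$ is obtained from $A$ by the operator with axiom set $\Psi:=\{\langle e,\delta\rangle : \exists\gamma[e\in K_\gamma \land \langle\gamma,\delta\rangle\in W_e]\}\in\Sigma_1(L_\alpha)$; this already yields $D_A \le_{\alpha e} A$. For the converse direction I would invoke a parametrization (s-m-n) argument in $L_\alpha$ to obtain an $\alpha$-computable $g:\alpha\to\alpha$ with $\aeop{g(x)}(X)=\{g(x)\}$ if $x\in X$ and $\aeop{g(x)}(X)=\emptyset$ otherwise; then $g(x)\in D_A \iff g(x)\in\aeop{g(x)}(A)\iff x\in A$, so the $\alpha$-enumeration operator $\Theta(Y):=\{x:g(x)\in Y\}$ witnesses $A\le_{\alpha e} D_A$ and hence $A\aeeq D_A$. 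Since $D_A \le_{\alpha e} D_A\oplus\overline{D_A}$ by the left projection, transitivity of $\le_{\alpha e}$ gives $A \le_{\alpha e} B$.

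It remains to show $B\not\le_{\alpha e} A$. Suppose toward a contradiction that $\aejump{A}=D_A\oplus\overline{D_A}\le_{\alpha e}A$; applying the right projection we get $\overline{D_A}\le_{\alpha e}A$, so by the correspondence between reductions and indexed operators there is an index $e_0<\alpha$ with $\overline{D_A}=\aeop{e_0}(A)$. Then
$$e_0\in D_A \iff e_0\in\aeop{e_0}(A) \iff e_0\in\overline{D_A} \iff e_0\notin D_A,$$
a contradiction. Therefore $B\not\le_{\alpha e}A$, and combined with $A\le_{\alpha e}B$ this gives $A<_{\alpha e}B$.

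The main obstacle, as against the case $\alpha=\omega$, is not the diagonalization itself but the two uniformity facts it rests on: that operator evaluation ``$e\in\aeop{e}(A)$'' is uniformly $\Sigma_1$ over $L_\alpha$, and that the parametrization producing $g$ (and the recovery of the diagonal index $e_0$) can be carried out \emph{inside} $L_\alpha$ with index strictly below $\alpha$. Both follow from the $\Sigma_1$-definable coding of $L_\alpha$ by $\alpha$ and the indexing propositions already in hand; the one genuinely $\alpha$-specific point to verify carefully is that $e_0$ may be taken below $\alpha$ (equivalently below $\alpha^*$ via \cref{prop_indexing_ace_sets_with_proj}), so that the self-reference $e_0\in\aeop{e_0}(A)$ is a legitimate instance of the fixed indexing.
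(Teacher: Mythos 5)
The paper gives no proof of \cref{cor_unboundedness_of_ae_degs} at all (the statement is followed immediately by a qed mark), so there is nothing to compare line by line; your argument has to stand on its own, and in its essentials it does. Your $D_A$ is exactly the paper's weak halting set $K(A)$, and your $B = K(A) \oplus \overline{K(A)}$ is the same device the paper itself invokes inside the proof of \cref{lemma_max_k_trivial} (steps 12--16), so the approach is the one the paper implicitly has in mind. Both halves of the strict inequality are sound: $A \le_{\alpha e} D_A$ holds because $g$ is a total $\alpha$-computable function, so $g[K_\gamma]$ is $\alpha$-finite by admissibility and the element-wise reduction $x \in A \iff g(x) \in D_A$ genuinely lifts to a strong reduction on $\alpha$-finite subsets; and $B \not\le_{\alpha e} A$ follows because a witness $W$ to $\overline{D_A} \le_{\alpha e} A$ is an $\alpha$-c.e.\ set, hence $W = W_{e_0}$ for some $e_0 < \alpha$, its induced weak operator satisfies $\aeop{e_0}(A) = \overline{D_A}$, and then $e_0 \in D_A \iff e_0 \notin D_A$.

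One genuinely $\alpha$-specific slip, though it is harmless to your conclusion: the claim that the axiom set $\Psi$ ``already yields $D_A \le_{\alpha e} A$.'' What $\Psi$ gives is only $D_A \le_{w\alpha e} A$, since it is an element-wise (weak) operator. Upgrading to $\le_{\alpha e}$ would require, for each $\alpha$-finite $K_\gamma \subseteq D_A$, a \emph{single} $\alpha$-finite subset of $A$ containing witnesses for every $e \in K_\gamma$ simultaneously; that is a $\Sigma_1$-collection statement relative to $A^+$, which is precisely what megaregularity of $A^+$ buys in \cref{prop_correspondence_wae_ae} and which can fail for arbitrary $A$ --- this is why the paper states \cref{lemma_enum_from_aeop} only for $\le_{w\alpha e}$. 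Fortunately you never use that direction: the chain $A \le_{\alpha e} D_A \le_{\alpha e} B$ plus the diagonalization suffices, so you should simply delete the assertion $A \equiv_{\alpha e} D_A$ and keep $A \le_{\alpha e} D_A$. A minor further point: your requirement $\aeop{g(x)}(X) = \{g(x)\}$ needs the recursion theorem on top of s-m-n (the index must appear in its own output); the simpler choice $\aeop{g(x)}(X) = \alpha$ if $x \in X$ and $\emptyset$ otherwise gives $g(x) \in D_A \iff x \in A$ from s-m-n alone, both of which are standard for admissible $\alpha$.
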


\subsection{Regularity}
\subsubsection{Regularity and quasiregularity}

\begin{defn}(Regularity and quasiregularity)
\begin{itemize}
\item A subset $A \subseteq \alpha$ is \emph{$\alpha$-regular} iff $\forall \gamma < \alpha. A \cap \gamma \in L_\alpha$.
\item A subset $A \subseteq \alpha$ is \emph{$\alpha$-quasiregular} iff $\forall \gamma < \mathrm{sup}(A). A \cap \gamma \in L_\alpha$.
\end{itemize}
\end{defn}

If clear from the context, we just say \emph{regular} and \emph{quasiregular} instead of $\alpha$-regular and $\alpha$-quasiregular respectively.

\begin{fact}
i) $\forall A \subseteq \alpha [A$ regular $\iff \overline{A}$ regular$]$,\\
ii) $\forall A, B \subseteq \alpha [A$ regular $\land B$ regular $\implies A \oplus B$ regular $]$,\\
iii) $\forall A, B \subseteq \alpha [A$ regular $\land B$ regular $\implies A \cup B$ regular $]$.
\end{fact}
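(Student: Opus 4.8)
The plan is to verify each clause directly from the definition of $\alpha$-regularity, namely that $A$ is regular iff every initial segment $A \cap \gamma$ (for $\gamma < \alpha$) lies in $L_\alpha$. The single recurring tool is that the collection of $\alpha$-finite sets $L_\alpha$ is closed under the basic Boolean operations (relative complement, pairwise union, intersection) and under taking images along $\alpha$-computable functions; the former holds because $\alpha$ admissible makes $L_\alpha$ a transitive model closed under $\Delta_0$-operations, and pairwise union is the two-element instance of \cref{prop_alpha_finite_union}.

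For (i), I would fix $\gamma < \alpha$ and observe $\overline{A} \cap \gamma = \gamma \setminus (A \cap \gamma)$. Since $\gamma \in L_\alpha$ (every ordinal below $\alpha$ is $\alpha$-finite) and $A \cap \gamma \in L_\alpha$ by regularity of $A$, the relative complement $\gamma \setminus (A \cap \gamma)$ is $\alpha$-finite, so $\overline{A} \cap \gamma \in L_\alpha$. As $\gamma$ was arbitrary, $\overline{A}$ is regular; the reverse implication is the same argument applied to $\overline{A}$, using $\overline{\overline{A}} = A$. For (iii), fixing $\gamma < \alpha$ gives $(A \cup B) \cap \gamma = (A \cap \gamma) \cup (B \cap \gamma)$, a pairwise union of two $\alpha$-finite sets (regularity of $A$ and $B$), hence $\alpha$-finite, so $A \cup B$ is regular.

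For (ii), recall $A \oplus B = \{2a : a \in A\} \cup \{2b+1 : b \in B\}$, and note the two coding maps $c_0 : x \mapsto 2x$ and $c_1 : x \mapsto 2x+1$ are $\alpha$-computable and inflationary, i.e. $x \le c_i(x)$. Hence any element of $A \oplus B$ below a fixed $\gamma < \alpha$ is the image of an element of $A$ (resp.\ $B$) that is itself below $\gamma$, giving the inclusion $(A \oplus B) \cap \gamma \subseteq c_0[A \cap \gamma] \cup c_1[B \cap \gamma]$. The right-hand side is $\alpha$-finite as a pairwise union of images of $\alpha$-finite sets under $\alpha$-computable maps, and intersecting with $\gamma \in L_\alpha$ returns $(A \oplus B) \cap \gamma$ to $L_\alpha$. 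So $A \oplus B$ is regular.

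I do not expect a genuine obstacle here: all three parts reduce to closure properties of $L_\alpha$. The only point requiring mild care is the ordinal arithmetic in (ii) — one must confirm that the even/odd coding functions never decrease their argument, so that bounding the coded value by $\gamma$ bounds the original value by $\gamma$; this is exactly what keeps the relevant initial segments of $A$ and $B$ within reach of their regularity hypotheses.
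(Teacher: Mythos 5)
Your proof is correct, and since the paper states this as a \emph{fact} with no proof supplied, your argument is exactly the kind of routine verification the author left implicit: all three parts follow from closure of $L_\alpha$ under Boolean operations (for limit $\alpha$) together with $\Sigma_1$-replacement for the inflationary, $\alpha$-computable coding maps in part (ii). The one point worth being careful about --- that $2a$ and $2b+1$, read in ordinal arithmetic, never decrease their argument, so elements of $(A \oplus B) \cap \gamma$ come only from $A \cap \gamma$ and $B \cap \gamma$ --- is precisely the point you flagged and handled.
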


\begin{thm}\label{thm_sacks_regular_set_existence}(Sack's Theorem on regular set existence\footnote{Sacks \cite{sacks1990higher}, theorem 4.2})\\
Let $A$ be $\alpha$-computably enumerable. Then there exists a regular, $\alpha$-computably enumerable $B$ of the same $\alpha$-degree as $A$.
\end{thm}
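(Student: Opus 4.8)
The plan is to regularize $A$ by a stagewise priority construction that codes the $\Sigma_1$ content of $A$ into a new set $B$ while actively freezing every proper initial segment, so that $B\cap\gamma\in L_\alpha$ for all $\gamma<\alpha$ and $B\equiv_\alpha A$. First I would dispose of the trivial case: if $A\in L_\alpha$ then $A$ is $\alpha$-finite, hence regular, and $B=A$ works; so assume $A\notin L_\alpha$. Fix an $\alpha$-computable enumeration $\{A_\sigma:\sigma<\alpha\}$ of $A$, i.e.\ an increasing, $\alpha$-computable sequence of $\alpha$-finite sets with $A=\bigcup_{\sigma<\alpha}A_\sigma$. It is essential to note the pitfall that rules out the naive answer: the ``snapshot graph'' $\{\langle x,\sigma\rangle:x\in A_\sigma\}$ is regular but also $\Delta_1(L_\alpha)$, hence $\alpha$-computable, so it discards the degree of $A$ entirely. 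Any regularization must therefore retain the genuinely undecidable $\Sigma_1$ content of $A$ (so that $A\le_\alpha B$, not merely $A\le_{\alpha e}B$) while still forcing initial segments to stabilize.

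To resolve this tension I would set up two families of requirements and a priority ordering. The \emph{regularity (negative) requirements} $R_\gamma$, one for each $\gamma<\alpha$, demand that $B\cap\gamma$ reach a final $\alpha$-finite value; each $R_\gamma$ acts by imposing a restraint forbidding new elements below $\gamma$ from entering $B$. The \emph{coding (positive) requirements} $C_\delta$, one for each $\delta<\alpha$, demand that membership of $\delta$ in $A$ be recorded in $B$ by a marker whose final position is $\alpha$-computably trackable. When an element $\delta$ enters $A$ at stage $\sigma$ but its intended code lies below a restraint imposed by a higher-priority $R_\gamma$, the marker for $\delta$ is redirected to a fresh position above all current restraints; this redirection injures only lower-priority regularity requirements.

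For the verification I would argue, by induction along the priority ordering, that each $R_\gamma$ is injured only $\alpha$-finitely often: the markers that can ever be placed below $\gamma$ come from stages before the restraint protecting $\gamma$ is permanently in force, and there are boundedly many such stages. Concretely, $B\cap\gamma$ is then an $\alpha$-finite union of $\alpha$-finite pieces and so lies in $L_\alpha$ by \cref{prop_alpha_finite_union}, giving regularity. The degree equivalence then follows: $A\le_\alpha B$ because $\delta\in A$ iff the marker for $\delta$ is eventually placed, and its final position can be located from $B\oplus\overline{B}$ (using negative information about $B$ to certify that a code slot is permanent); conversely $B\le_\alpha A$ since $B$ is $\alpha$-c.e.\ and, being regular, its complement $\overline{B}$ is enumerable from $A\oplus\overline{A}$ once each $\alpha$-finite value $B\cap\gamma$ is certified.

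The hard part will be the injury bookkeeping, which is exactly where the $\omega$-case (trivial, since all subsets of $\omega$ are regular) and the admissible case diverge. I must show that every $R_\gamma$ is injured only \emph{$\alpha$-finitely} — not merely ``finitely'' — often, and that every $C_\delta$ eventually secures a permanent code, so that the two families are simultaneously met. This requires bounding, for the $\alpha$-finite set of requirements and markers relevant to a fixed $\gamma$, the stages and redirections involved; that bounding step is precisely an application of $\Sigma_1$-admissibility ($\Sigma_1$-collection), which guarantees the injury sets are bounded below $\alpha$. Making the restraints and marker reassignments $\alpha$-computable and showing the resulting construction does not stall — so that the restraints are cofinal in $\alpha$ yet every initial segment settles — is the crux of the argument.
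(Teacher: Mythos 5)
The paper does not actually prove this theorem---it quotes it from Sacks \cite{sacks1990higher}---so your attempt must stand on its own, and it does not: the step you defer as ``the hard part'' is not a bookkeeping detail but the precise point where the argument breaks, and your appeal to $\Sigma_1$-collection there is invalid. What your construction gives is that each $R_\gamma$ is injured by at most the (fewer than $\alpha$ many) higher-priority coding requirements; that is an ordinal bound on the \emph{number} of injuries, which is not at all the same as the injury set being $\alpha$-finite, i.e.\ an element of $L_\alpha$. A subset of $\alpha$ of order type $\omega$ can fail to lie in $L_\alpha$---that failure is exactly what non-regularity means---so conflating ``$<\alpha$ many'' with ``$\alpha$-finite'' assumes the very regularity you are trying to manufacture. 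Worse, in the only case where the theorem has content, the injuries to a fixed $R_\gamma$ occur at stages cofinal in $\alpha$: if $S$ is the set of $\delta$ whose coding requirements outrank $R_\gamma$ and $S \cap A \notin L_\alpha$ (such $S$, $\gamma$ exist whenever $A$ is not regular), then by admissibility the elements of $S \cap A$ enter $A$ at stages unbounded in $\alpha$, so either their markers land below $\gamma$ cofinally often (and $B \cap \gamma$ never settles, its limit being the $\Sigma_1$ image of a non-$\alpha$-finite set of stages, with no reason to lie in $L_\alpha$), or the construction pushes them to fresh high positions tied to their entry stages---which is exactly the ``snapshot'' coding you correctly ruled out, and which destroys $A \le_\alpha B$: to decide $\delta \notin A$ from $B \oplus \overline{B}$ you need a $B$-computable bound on where a code for $\delta$ could ever appear, and your sketch supplies no such mechanism. $\Sigma_1$-collection bounds images of $\alpha$-finite sets under $\Sigma_1$ functions; the stage sets in question are not $\alpha$-finite, so it gives nothing. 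This circularity is the well-known obstruction to injury arguments over admissible ordinals (it is why even Friedberg--Muchnik at level $\alpha$ required the Sacks--Simpson blocking machinery), and nothing in your proposal substitutes for it.

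The known proof is also structurally different from yours: it is priority-free and hinges on the $\Sigma_1$-projectum, which your plan never engages. If $\alpha^* = \alpha$, then by the minimality in \cref{prop_projectum_eq_defs} every bounded $\Sigma_1(L_\alpha)$ set is $\alpha$-finite, so \emph{every} $\alpha$-c.e.\ set is already regular and there is nothing to prove; the theorem has content only when $\alpha^* < \alpha$. In that case one fixes a $\Sigma_1$ injection $i \colon \alpha \to \alpha^*$ and takes $B$ to be (roughly) the deficiency set of a one-one enumeration $f$ of $A$ reordered by $i$-values, $B = \{\sigma : \exists \tau > \sigma\, [\, i(f(\tau)) < i(f(\sigma))\,]\}$: regularity of $B$ then falls out of the projectum's minimality (the relevant trace $i[A] \cap \xi$ is a $\Sigma_1$ subset of a proper initial segment of $\alpha^*$, hence $\alpha$-finite), and $A \equiv_\alpha B$ is recovered by a true-stages analysis, which is precisely the missing ``certify the marker never moves again'' mechanism in your sketch. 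So the theorem is not a routine injury argument generalized from $\omega$; it is a different kind of argument, and your proposal as written cannot be completed without importing that machinery.
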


\begin{thm}\label{thm_shore_splitting}(\emph{Shore's Splitting Theorem} \cite{shore1975splitting})\\
Let $B$ be $\alpha$-computably enumerable and regular. Then there exists $\alpha$-computably enumerable $A_0, A_1$ st $B = A_0 \sqcup A_1$ and
$B \not\le_{\alpha} A_i (i \in \{0,1\})$.
\qed
\end{thm}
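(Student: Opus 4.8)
The plan is to build the splitting $B = A_0 \sqcup A_1$ by an $\alpha$-finite-injury priority construction of Sacks-preservation type, in which each element entering $B$ is routed into exactly one of $A_0, A_1$ so as to protect the highest-priority threatened computation. Since $A \le_{\alpha} C$ unwinds to $A \oplus \overline{A} \le_{\alpha e} C \oplus \overline{C}$, it suffices to beat every $\alpha$-enumeration operator, and by \cref{prop_indexing_ace_sets_with_proj} I may index these with $e < \alpha^*$. So I set up, for each $e < \alpha^*$ and $i \in \{0,1\}$, the requirement
\[ R_{e,i} : \aeop{e}(A_i \oplus \overline{A_i}) \ne B \oplus \overline{B}, \]
and fix a priority well-ordering of the $R_{e,i}$ of order type $\alpha^*$. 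Note the argument requires $B \not\in \Delta_1(L_\alpha)$, which I assume; the contradiction derived below is precisely that $B$ is $\alpha$-computable.

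Since $B$ is regular and $\alpha$-c.e., fix a tame $\Sigma_1(L_\alpha)$ enumeration $\{B_\sigma\}_{\sigma<\alpha}$ with $B_\sigma \in L_\alpha$, $\sigma \mapsto B_\sigma$ increasing and $\bigcup_\sigma B_\sigma = B$; regularity gives $B \cap \gamma \in L_\alpha$ for every $\gamma < \alpha$, which is what makes the stagewise approximations to $B \oplus \overline{B}$ $\alpha$-finite. At stage $\sigma$ I define for each requirement a length of agreement
\[ \ell(e,i,\sigma) = \sup\{ \gamma : (B \oplus \overline{B}) \cap \gamma = \aeop{e}(A_i \oplus \overline{A_i})[\sigma] \cap \gamma \}, \]
and a restraint $r(e,i,\sigma)$ equal to the supremum of the $\alpha$-finite uses witnessing this agreement, which exist by the witness property (\cref{prop_enum_op_witness}). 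When $x$ enters $B$ at stage $\sigma$ it must join exactly one side; I place it opposite to the column protected by the highest-priority requirement $R_{e,i}$ with $x$ below $r(e,i,\sigma)$, i.e.\ into $A_{1-i}$. Because every requirement in column $i$ wants to keep elements out of $A_i$, the only genuine conflict is between the two columns, and it is resolved uniformly by priority; this is precisely the structural feature that keeps a splitting construction simpler than a general infinite-injury argument.

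The verification runs by induction along the priority order. Suppose $\aeop{e}(A_i \oplus \overline{A_i}) = B \oplus \overline{B}$ for some $R_{e,i}$; I show $B$ is then $\alpha$-computable, contradicting the assumption. Once every strictly higher-priority requirement has ceased to act and to be injured, at some stage $\sigma_0$, any $x$ entering $B$ below the current restraint of $R_{e,i}$ is routed into $A_{1-i}$, so $A_i$ is frozen below each use of an agreement computation. Since $\ell(e,i,\sigma) \to \alpha$, for each $\gamma < \alpha$ one can search $\alpha$-computably for a stage $\sigma > \sigma_0$ with $\ell(e,i,\sigma) > \gamma$; the value $\aeop{e}(A_i \oplus \overline{A_i})[\sigma]$ on $\gamma$ is then permanently correct by monotonicity (\cref{prop_enum_op_monotonicity}) and preservation, yielding $(B \oplus \overline{B}) \cap \gamma$, and hence $B$, as a $\Delta_1(L_\alpha)$ set computed with the single $\alpha$-finite parameter $\sigma_0$.

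The real obstacle is the inductive claim that each $R_{e,i}$ has injury bounded below $\alpha$, so that the settling stage $\sigma_0 < \alpha$. The naive route, applying $\Sigma_1$-collection to the $\alpha$-finite set of higher-priority requirements, fails, because the predicate ``$R_{e',i'}$ stops injuring $R_{e,i}$ by stage $y$'' is $\Pi_1$ rather than $\Sigma_1$, and the statement that a requirement is injured only boundedly often is $\Sigma_2$; a general admissible $\alpha$ need not be $\Sigma_2$-admissible. Overcoming this is exactly the content of Shore's argument: one organizes the requirements into $\alpha$-finite blocks and controls the restraint imposed by a whole block at once, so that the relevant injuries can be bounded using only $\Sigma_1$-admissibility together with the regularity and tameness of the enumeration of $B$. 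This blocking and its bookkeeping, rather than the preservation idea itself, are the technical core, which is why the theorem is credited to \cite{shore1975splitting}.
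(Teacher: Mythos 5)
The paper does not prove this theorem at all: it is stated without proof and imported as a black box from \cite{shore1975splitting} (it is invoked once, in the proof of \cref{lemma_jockusch_strongly_non-re_semicomputable}). So your attempt has to be measured against Shore's own argument, and against that standard there is a genuine gap, one you name yourself. Everything up to your final paragraph is the Sacks preservation framework --- requirements, length of agreement, restraints, routing each new element of $B$ opposite the highest-priority restraint --- which is the part that was already routine at $\omega$. The part that makes the theorem Shore's, namely bounding the injury to each requirement below $\alpha$ using only $\Sigma_1$-admissibility, is asserted to exist (``one organizes the requirements into $\alpha$-finite blocks'') but never carried out: no blocks are defined, no block-restraint rule is given, and no verification is offered that blocking makes the settling stage $\sigma_0$ exist below $\alpha$. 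Your own verification still begins ``once every strictly higher-priority requirement has ceased to act, at some stage $\sigma_0$,'' which is exactly the $\Sigma_2$-style assertion you correctly identified as unavailable. In Shore's proof this is where the real work lives: the blocks are indexed along a $\Sigma_2$-cofinal sequence, their sizes are calibrated against the projectum (with a case division according to how the $\Sigma_2$-cofinality of $\alpha$ compares with $\alpha^*$), and one proves that the total restraint of each block is $\alpha$-finite and that each block acts boundedly, using regularity of $B$ and tameness of its enumeration. Deferring all of that to the citation means the proposal is an outline of the strategy, not a proof of the theorem.

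Two smaller points. First, your length-of-agreement function is defined from the true set $B \oplus \overline{B}$; as written, $\ell$ is not $\alpha$-computable and the construction would not produce $\alpha$-c.e.\ sets $A_0, A_1$. It must be defined from the stage approximation $B_\sigma \oplus \overline{B_\sigma}$, with regularity and tameness guaranteeing that initial segments of the approximation stabilize to initial segments of $B \oplus \overline{B}$; this is where those hypotheses actually enter, and your write-up never uses them beyond saying so. Second, in your favor: you correctly added the hypothesis $B \not\in \Delta_1(L_\alpha)$, which the paper's statement omits but which is necessary, since for $\alpha$-computable $B$ one has $B \le_\alpha A_i$ trivially and the conclusion fails.
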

\subsubsection{Megaregularity}
Megaregularity of a set $A$ measures the amount of the admissibility of a structure structure $\langle L_\alpha, A \rangle$, i.e. a structure extended by a predicate with an access to $A$.

\begin{note}(Formula with a positive/negative parameter)
\begin{itemize}
\item Let $B$ denote a set, $B^+$ its enumeration, $B^-$ the enumeration of its complement $\overline{B}$.
\item Denote by $\Sigma_1(L_\alpha, B)$ the class of $\Sigma_1$ formulas with a parameter $B$ or in $L_\alpha$.
\item A $\Sigma_1(L_\alpha, B)$ formula $\phi(\overline{x}, B)$ is $\Sigma_1(L_\alpha, B^+)$ iff $B$ occurs in $\phi(\overline{x}, B)$ only positively, i.e. there is no negation up the formula tree above the literal $x \in B$.
\item Similarly, a $\Sigma_1(L_\alpha, B)$ formula $\phi(\overline{x}, B)$ is $\Sigma_1(L_\alpha, B^-)$ iff $B$ occurs in $\phi(\overline{x}, B)$ only negatively.
\end{itemize}
\end{note}

\begin{defn}(Megaregularity)\\
Let $\mathcal{B} \in \{B, B^-, B^+\}$ and add $B$ as a predicate to the language for the structure $\langle L_\alpha, \mathcal{B} \rangle$.
\begin{itemize}
\item Then $\mathcal{B}$ is $\alpha$-\emph{megaregular} iff $\alpha$ is $\Sigma_1(L_\alpha, \mathcal{B})$ admissible iff the structure $\langle L_\alpha, \mathcal{B} \rangle$ is admissible,
i.e. every $\Sigma_1(L_\alpha, \mathcal{B})$ definable function satisfies the replacement axiom:
$\forall f \in \Sigma_1(L_\alpha, \mathcal{B}) \forall K \in L_\alpha. f[K] \in L_\alpha$.
\item $B$ is \emph{positively $\alpha$-megaregular} iff $B^+$ is $\alpha$-megaregular.
\item $B$ is \emph{negatively $\alpha$-megaregular} iff $B^-$ is $\alpha$-megaregular.
\end{itemize}
\end{defn}

If clear from the context, we just say \emph{megaregular} instead of $\alpha$-megaregular.

\begin{rk}(Hyperregularity and megaregularity)\\
A person familiar with the notion of hyperregularity shall note that a set is
megaregular iff it is regular and hyperregular.
\end{rk}

\begin{prop}\label{prop_alpha_fin_iff_bounded_and_alpha_comp_with_oracle}
Let $\mathcal{B} \in \{B, B^-, B^+\}$ be megaregular and let $A \subseteq \alpha$. Then:
$A \in L_\alpha$ iff $A \in \Delta_1(L_\alpha, \mathcal{B})$ and $A$ is bounded by some $\beta < \alpha$.
\end{prop}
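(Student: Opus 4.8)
The plan is to prove the two implications separately; only the right-to-left direction will need megaregularity. For the forward direction, suppose $A \in L_\alpha$. Since $\alpha$ is admissible, hence a limit ordinal, $L_\alpha = \bigcup_{\beta < \alpha} L_\beta$, so $A \in L_\beta$ for some $\beta < \alpha$. As $L_\beta$ is transitive we get $A \subseteq L_\beta$, and since $A$ consists of ordinals, $A \subseteq L_\beta \cap \mathrm{Ord} = \beta$; thus $A$ is bounded by $\beta < \alpha$. Moreover, with $A$ itself as a parameter the conditions $x \in A$ and $x \notin A$ are $\Delta_0(L_\alpha)$, so $A \in \Delta_1(L_\alpha) \subseteq \Delta_1(L_\alpha, \mathcal{B})$. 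This settles $\Rightarrow$ without any appeal to megaregularity.

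For the converse, assume $A \subseteq \beta$ with $\beta < \alpha$ (so $\beta \in L_\alpha$) and $A \in \Delta_1(L_\alpha, \mathcal{B})$. Then both $A$ and $\beta \setminus A$ are $\Sigma_1(L_\alpha, \mathcal{B})$, say $x \in A \iff \exists w.\, \theta_1(x,w)$ and $x \in \beta \setminus A \iff \exists w.\, \theta_0(x,w)$ with $\theta_0, \theta_1 \in \Delta_0(L_\alpha, \mathcal{B})$. For each $x < \beta$ exactly one of the two holds, so the function $g(x) :=$ the least $w$ with $\theta_0(x,w) \lor \theta_1(x,w)$ is total on $\beta$ and has a $\Delta_0(L_\alpha, \mathcal{B})$ graph, hence is $\Sigma_1(L_\alpha, \mathcal{B})$. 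Because $\mathcal{B}$ is megaregular, $\langle L_\alpha, \mathcal{B}\rangle$ is admissible and $g$ satisfies replacement, so $g[\beta] \in L_\alpha$ and is therefore bounded by some $\gamma < \alpha$. Then every membership question about $A$ is decided below $\gamma$, i.e. $A = \{x < \beta : \exists w < \gamma.\, \theta_1(x,w)\}$, which is a $\Delta_0(L_\alpha, \mathcal{B})$ condition with parameters $\beta, \gamma \in L_\alpha$ over the $\alpha$-finite set $\beta$. By $\Delta_0$-separation in the admissible structure $\langle L_\alpha, \mathcal{B}\rangle$ we conclude $A \in L_\alpha$.

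I expect the only real work to be the converse, and within it the single decisive step is the passage from the unbounded $\Sigma_1$ witnesses to a uniform bound $\gamma < \alpha$. This is exactly where megaregularity is indispensable: for a merely admissible $L_\alpha$ without the oracle, the witnesses deciding $x \in A$ as $x$ ranges over $\beta$ could be cofinal in $\alpha$, and $A$ would fail to be $\alpha$-finite despite being bounded and $\Delta_1(L_\alpha, \mathcal{B})$. Once the uniform bound is in hand, reducing the $\Sigma_1$ definition to a bounded one and applying $\Delta_0$-separation is routine. A minor point to verify is that replacement (equivalently $\Sigma_1$-collection) and $\Delta_0$-separation are available for formulas mentioning the predicate $\mathcal{B}$, which is precisely the content of $\langle L_\alpha, \mathcal{B}\rangle$ being admissible.
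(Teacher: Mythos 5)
Your forward direction matches the paper's (which dismisses it as ``clear'') and is fine. For the converse you take a genuinely different route: you rerun the standard Kripke--Platek argument for $\Delta_1$-separation --- use replacement on a least-witness function to bound the $\Sigma_1(L_\alpha,\mathcal{B})$ witnesses over $\beta$, rewrite $A$ as a $\Delta_0(L_\alpha,\mathcal{B})$ subset of $\beta$, then invoke $\Delta_0$-separation in $\langle L_\alpha,\mathcal{B}\rangle$. The paper instead applies replacement exactly once, to the function $f$ with $f(x)=x$ for $x\in A$ and $f(x)=a$ otherwise (for a fixed $a\in A$, WLOG $A\neq\emptyset$): its graph is $\Sigma_1(L_\alpha,\mathcal{B})$ because $A\in\Delta_1(L_\alpha,\mathcal{B})$, and $f[\beta]=A\in L_\alpha$ by megaregularity. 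The paper's proof is shorter and, more importantly, uses only what its definition of megaregularity literally provides, namely the image property $f[K]\in L_\alpha$ for $\Sigma_1(L_\alpha,\mathcal{B})$ functions on $\alpha$-finite sets.

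Two points in your write-up need repair. First, your $g$ does not have a $\Delta_0(L_\alpha,\mathcal{B})$ graph as claimed: ``least $w$'' presupposes either the canonical $\Sigma_1(L_\alpha)$-definable well-ordering of $L_\alpha$ or a coding of witnesses as ordinals, and either way the graph comes out $\Sigma_1(L_\alpha,\mathcal{B})$, not $\Delta_0$. This is harmless, since $\Sigma_1$ is all that replacement needs. Second, and more substantively, your last step appeals to $\Delta_0(\mathcal{B})$-separation, which is not part of the paper's operative definition of megaregularity (the paper glosses admissibility of $\langle L_\alpha,\mathcal{B}\rangle$ as the replacement property alone). You flag this yourself but leave it unverified. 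It can be verified, but the verification is exactly the paper's trick: with your bound $\gamma$ in hand, send each $x<\beta$ with $\exists w<\gamma\,\theta_1(x,w)$ to itself and every other $x<\beta$ to a fixed $a\in A$, and apply replacement to that function. Once this is observed, your witness-bounding detour becomes unnecessary --- the same trick applied directly to the unbounded $\Delta_1$ definition of $A$ is precisely the paper's one-line proof.
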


\begin{proof}
$\implies$ direction is clear. For the other direction, assume that $A \in \Delta_1(L_\alpha, \mathcal{B})$ and $A \subseteq \beta < \alpha$ for some $\beta$.
WLOG let $A \not=\emptyset$ and let $a \in A$.
Define a function $f:\alpha \to \alpha$ by $f(x)=y :\iff x \not\in \beta \land y =a \lor x \in \beta \land [x \in A \land x=y \lor x \not\in A \land y=a]$.
Since  $A \in \Delta_1(L_\alpha, \mathcal{B})$, the function $f$ is $\Sigma_1(L_\alpha, \mathcal{B})$ definable.
By the megaregularity of $\mathcal{B}$, we have that $A=f[\beta] \in L_\alpha$ as required.
\end{proof}

\begin{cor}\label{cor_mr_deg_invariance}(Megaregularity closure and degree invariance)\\
i) If $A \le_{\alpha e} B$ and $B^+$ megaregular, then $A^+$ megaregular.\\
ii) If $A \equiv_{\alpha e} B$, then $[A^+$ megaregular iff $B^+$ megaregular $]$.\\
iii) If $A \le_{\alpha} B$ and $B$ megaregular, then $A$ megaregular.\\
iv) If $A \equiv_{\alpha} B$, then $[A$ megaregular iff $B$ megaregular $]$.\\
v) If $A \in \Sigma_1(L_\alpha)$, then $A^+$ is megaregular.\\
vi) If $A \in \Delta_1(L_\alpha)$, then $A$ is megaregular.
\qed
\end{cor}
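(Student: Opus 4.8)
The plan is to reduce all six parts to a single transfer principle for positive $\Sigma_1$-definability, and then read off (i)--(vi) by symmetry and by specialisation to the trivial oracle. Concretely, I would first isolate the lemma: \emph{if $A \le_{\alpha e} B$, then every relation (in particular every function graph) that is $\Sigma_1(L_\alpha, A^+)$-definable is already $\Sigma_1(L_\alpha, B^+)$-definable.} Granting this, part (i) is immediate: a $\Sigma_1(L_\alpha, A^+)$ function $f$ becomes $\Sigma_1(L_\alpha, B^+)$, so if $\langle L_\alpha, B^+ \rangle$ is admissible then $f[K] \in L_\alpha$ for every $K \in L_\alpha$, i.e. $A^+$ is megaregular. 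Part (ii) is then just (i) applied in both directions of $A \equiv_{\alpha e} B$.

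To prove the transfer lemma I would argue by substitution on the syntactic shape of a $\Sigma_1(L_\alpha, A^+)$ formula. Write such a relation as $R(\bar{x}) \iff \exists w\, \theta(\bar{x}, w, A)$ with $\theta$ bounded and $A$ occurring only positively, so that the only atoms mentioning $A$ have the form $t \in A$. By \cref{defn_ae_reducibility} there is $W \in \Sigma_1(L_\alpha)$ with, on singleton indices, $t \in A \iff \exists \delta [\langle \gamma_t, \delta \rangle \in W \land K_\delta \subseteq B]$, where $t \mapsto \gamma_t$ (an index under the $\Sigma_1$ bijection of \cref{prop_bij_alpha_to_l_alpha} for the $\alpha$-finite set $\{t\}$) is itself $\Sigma_1$. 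The right-hand side $\sigma_B(t)$ is $\Sigma_1(L_\alpha, B^+)$, since $K_\delta \subseteq B$ unfolds to the bounded positive formula $\forall u \in K_\delta.\, u \in B$. Replacing every positive atom $t \in A$ in $\theta$ by $\sigma_B(t)$ and pulling the freshly introduced existential quantifiers to the front yields a $\Sigma_1$ formula in which $B$ still occurs only positively; this defines $R$ from $B^+$, proving the lemma.

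For the total parts I would first record the auxiliary equivalence that, for any $C$, $C$ is megaregular iff $(C \oplus \overline{C})^+$ is megaregular: positive access to $C \oplus \overline{C}$ supplies both $x \in C$ (as $2x \in C \oplus \overline{C}$) and $x \notin C$ (as $2x+1 \in C \oplus \overline{C}$) as positive queries, so $\Sigma_1(L_\alpha, C)$ and $\Sigma_1(L_\alpha, (C \oplus \overline{C})^+)$ define exactly the same functions and hence confer the same admissibility. Now (iii) follows: $A \le_\alpha B$ unwinds to $A \oplus \overline{A} \le_{\alpha e} B \oplus \overline{B}$, so $B$ megaregular gives $(B \oplus \overline{B})^+$ megaregular, the transfer lemma gives $(A \oplus \overline{A})^+$ megaregular, and the auxiliary equivalence returns $A$ megaregular; (iv) is (iii) both ways. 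For (v), note that an $\alpha$-c.e. $A$ satisfies $A \le_{\alpha e} \emptyset$ (take $W = \{\langle \gamma, \delta_\emptyset \rangle : K_\gamma \subseteq A\}$, which is $\Sigma_1$), while $\emptyset^+$ is megaregular because $\langle L_\alpha, \emptyset^+ \rangle$ reduces to $L_\alpha$ and $\alpha$ is admissible; (i) then gives $A^+$ megaregular. Finally (vi) is (v) via the auxiliary equivalence: $A \in \Delta_1(L_\alpha)$ makes $A \oplus \overline{A}$ $\alpha$-c.e., so $(A \oplus \overline{A})^+$ is megaregular, whence $A$ is megaregular.

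The main obstacle is the transfer lemma, and within it the bookkeeping that positivity is preserved: the substitution is sound precisely because $A$ occurs only positively and the replacement $\sigma_B$ is positive in $B$, so no polarity is flipped and the existential prefix stays $\Sigma_1$. This is exactly where the hypothesis $A^+$ (rather than $A$ or $A^-$) is essential, since enumeration reducibility transmits only positive information, and it is also why the total statements (iii), (iv) must be routed through the join $C \oplus \overline{C}$, which repackages two-sided access to $C$ as positive access to a single set. I would also take a little care over the uniformity of $t \mapsto \gamma_t$ and over reading the definition of megaregularity for function graphs rather than arbitrary relations, but these are routine given \cref{prop_bij_alpha_to_l_alpha}.
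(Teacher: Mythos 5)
Your architecture is the natural one and matches the route the paper leaves implicit by stating this as a corollary with no proof: a positive-definability transfer along $\le_{\alpha e}$, the dictionary $\Sigma_1(L_\alpha, C) = \Sigma_1(L_\alpha, (C \oplus \overline{C})^+)$ for the total parts, and specialisation to the empty oracle for (v), (vi). The dictionary argument is a sound literal-by-literal translation (it introduces only bounded quantifiers, so no admissibility issues arise), and the derivations of (ii), (iii), (iv), (v), (vi) from (i) and the dictionary are all correct.

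The gap is in your transfer lemma. As stated --- assuming only $A \le_{\alpha e} B$ --- it is not established by the substitution argument, and the step ``pulling the freshly introduced existential quantifiers to the front'' is exactly where it breaks. The matrix $\theta$ is $\Delta_0$ and in general contains bounded universal quantifiers above the positive atoms $t \in A$ (the paradigm case $K \subseteq A$, i.e. $\forall u \in K\, u \in A$, already has this shape). After substituting $\sigma_B$ you face $\forall u \in v\, \exists \delta\, \psi(u, \delta, B^+)$, and rewriting this as $\exists z\, \forall u \in v\, \exists \delta \in z\, \psi$ is not prenexing: it requires $\Sigma_1(L_\alpha, B^+)$-collection in the structure $\langle L_\alpha, B^+ \rangle$, which is precisely the megaregularity of $B^+$ --- the very notion you are propagating. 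At $\alpha = \omega$ this is invisible because bounded sets are finite and witnesses can be maxed out, but at general admissible $\alpha$ it is the crux; the paper's remark that weak $\alpha$-enumeration reducibility fails to be transitive reflects the same phenomenon, so your unrestricted lemma is not merely unproved but doubtful. The repair is local: state the lemma with the additional hypothesis that $B^+$ is megaregular (collection is then available and your induction on the shape of $\theta$ goes through), or equivalently factor through \cref{prop_correspondence_wae_ae}. Since every application you make --- oracle $B$ in (i), $B \oplus \overline{B}$ in (iii), $\emptyset$ in (v) --- does have a megaregular positive oracle by hypothesis, the corollary stands as you derived it once the lemma is restated.
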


\subsubsection{Regularity and definability}
\begin{prop}\label{prop_correspondence_wae_ae}($\Sigma_1$ definability and $\alpha$-enumeration reducibilities correspondence)\\
We have the following implication diagram:\\
\begin{center}
\begin{tikzcd}
A \in \Sigma_1(L_\alpha, B^+) \arrow[rr, bend left, "\text{if $B$ regular}"] & &
A \le_{w\alpha e} B \arrow[rr, bend left, "\text{if $B^+$ megaregular}"]
\arrow[ll, bend left, "\text{always}"] & &
A \le_{\alpha e} B \arrow[ll, bend left, "\text{always}"]
\end{tikzcd}
\end{center}
\qed
\end{prop}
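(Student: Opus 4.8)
The plan is to prove the four labelled arrows independently, since the diagram merely asserts four implications. The two arrows marked \emph{always} are routine unfoldings of the definitions. For the lower one, $A \le_{\alpha e} B \implies A \le_{w\alpha e} B$, I would specialise the operator $W$ witnessing $A \le_{\alpha e} B$ to singletons: letting $s:\alpha\to\alpha$ be the $\alpha$-computable map with $K_{s(x)}=\{x\}$ and putting $\Phi:=\{\langle x,\delta\rangle : \langle s(x),\delta\rangle\in W\}$, one checks directly that $\Phi(B)=A$. For the upper one, $A \le_{w\alpha e} B \implies A\in\Sigma_1(L_\alpha,B^+)$, I would read off from the definition that $x\in A \iff \exists\delta\,[\langle x,\delta\rangle\in\Phi \land K_\delta\subseteq B]$; here $\langle x,\delta\rangle\in\Phi$ is $\Sigma_1(L_\alpha)$ and $K_\delta\subseteq B$ is the bounded formula $\forall y\in K_\delta.\,y\in B$ in which $B$ occurs only positively, so the whole matrix is $\Sigma_1(L_\alpha,B^+)$.

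For the arrow $A\in\Sigma_1(L_\alpha,B^+)\implies A\le_{w\alpha e}B$ under regularity of $B$, I would write $x\in A\iff\exists z\,\psi(x,z,B)$ with $\psi$ a $\Sigma_0$ matrix in which $B$ is positive, and set $\Phi:=\{\langle x,\delta\rangle:\exists z\,\psi(x,z,K_\delta)\}$, which is $\Sigma_1(L_\alpha)$. Positivity of $B$ in $\psi$ (monotonicity) gives $\Phi(B)\subseteq A$, since $\psi(x,z,K_\delta)$ with $K_\delta\subseteq B$ entails $\psi(x,z,B)$. For the reverse inclusion $A\subseteq\Phi(B)$, given $x\in A$ with witness $z$, the bounded quantifiers of $\psi$ only ever test membership in $B$ of elements of some $\alpha$-finite set $S$ determined by $x$ and $z$; regularity of $B$ makes $K:=B\cap S\in L_\alpha$, and this $\alpha$-finite $K\subseteq B$ still satisfies $\psi(x,z,K)$, so $\langle x,\delta\rangle\in\Phi$ for $K_\delta=K$ and $x\in\Phi(B)$. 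Thus regularity is exactly what turns the positive information used by $\psi$ into a genuine $\alpha$-finite witness.

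The hard part will be $A\le_{w\alpha e}B\implies A\le_{\alpha e}B$ under megaregularity of $B^+$. I would take the weak operator $\Phi$ and define $W:=\{\langle\gamma,\delta\rangle:\forall x\in K_\gamma\,\exists\delta'\,[\langle x,\delta'\rangle\in\Phi\land K_{\delta'}\subseteq K_\delta]\}$, which is $\Sigma_1(L_\alpha)$ because a bounded universal quantifier in front of a $\Sigma_1$ matrix stays $\Sigma_1$ by $\Sigma_1$-collection in the admissible $L_\alpha$. The direction $\exists\delta\,[\langle\gamma,\delta\rangle\in W\land K_\delta\subseteq B]\implies K_\gamma\subseteq A$ is immediate from $\Phi(B)=A$. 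The converse is the crux: assuming $K_\gamma\subseteq A$, the relation $P(x,\delta')\equiv\langle x,\delta'\rangle\in\Phi\land K_{\delta'}\subseteq B$ is $\Sigma_1(L_\alpha,B^+)$ and satisfies $\forall x\in K_\gamma\,\exists\delta'\,P(x,\delta')$, so megaregularity of $B^+$ (admissibility of $\langle L_\alpha,B^+\rangle$), in the form of $\Sigma_1$-collection, produces an $\alpha$-finite $z$ with $\forall x\in K_\gamma\,\exists\delta'\in z\,P(x,\delta')$. To assemble a single witness I would set $\beta:=\sup\bigcup_{\delta'\in z}K_{\delta'}<\alpha$, the union being $\alpha$-finite by \cref{prop_alpha_finite_union}; since a megaregular set is in particular regular, $B\cap\beta\in L_\alpha$, so $D:=\{\delta'\in z:K_{\delta'}\subseteq B\}$ is $\Delta_0$ in the parameters $z$ and $B\cap\beta$, hence $\alpha$-finite, and $K_\delta:=\bigcup_{\delta'\in D}K_{\delta'}$ is $\alpha$-finite by \cref{prop_alpha_finite_union} with $K_\delta\subseteq B$. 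Each $x\in K_\gamma$ has its witness $\delta'\in D$, so $\langle\gamma,\delta\rangle\in W$ and $K_\delta\subseteq B$, yielding $A\le_{\alpha e}B$. The main obstacle is precisely this last step: the weak operator $\Phi$ witnesses elements of $A$ one at a time, and the content of the statement is that megaregularity of $B^+$ is exactly the collection principle needed to merge the $\alpha$-finitely many witnesses for $K_\gamma\subseteq A$ into a single $\alpha$-finite subset of $B$.
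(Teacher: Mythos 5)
The paper offers no proof of this proposition (it is stated with a \qed), so your attempt can only be judged on its own terms. Three of your four arrows are correct: the singleton specialisation for $A \le_{\alpha e} B \implies A \le_{w\alpha e} B$, the positive-$\Sigma_1$ reading of the weak reduction for $A \le_{w\alpha e} B \implies A \in \Sigma_1(L_\alpha, B^+)$, and the regularity arrow, where your observation that all $B$-queries of a $\Delta_0$ matrix are confined to an $\alpha$-finite set $S$, so that $K := B \cap S \in L_\alpha$ serves as the $\alpha$-finite witness, is exactly the right use of regularity.

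The gap is in the megaregularity arrow, at the pruning step: ``since a megaregular set is in particular regular, $B \cap \beta \in L_\alpha$.'' The hypothesis there is megaregularity of $B^+$, not of $B$, and under the paper's stated definition (replacement for $\Sigma_1(L_\alpha, B^+)$-definable functions) positive megaregularity does \emph{not} imply regularity of $B$: by \cref{cor_mr_deg_invariance}(v) every $\alpha$-c.e.\ set has megaregular positive part, yet non-regular $\alpha$-c.e.\ sets exist whenever $\alpha^* < \alpha$ (any $\Sigma_1(L_\alpha)$ subset of $\alpha^*$ not lying in $L_\alpha$ is bounded but not $\alpha$-finite) --- this is precisely why \cref{thm_sacks_regular_set_existence} is needed at all. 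The hierarchy ``megaregular $\implies$ regular'' in the paper concerns the full-access notion $\mathcal{B}=B$, not $B^+$. What your construction actually requires at that point is that $D = \{\delta' \in z : K_{\delta'} \subseteq B\}$ be $\alpha$-finite, i.e.\ $\Delta_0(L_\alpha, B^+)$-separation on $\alpha$-finite sets; $\Sigma_1(B^+)$-collection alone does not yield this, and indeed (applied to a set of indices of singletons) this separation principle is \emph{equivalent} to regularity of $B$. The repair is to read ``$\langle L_\alpha, B^+\rangle$ is admissible'' as full KP in the expanded positive language, so that $\Delta_0(B^+)$-separation is available; then $D \in L_\alpha$ immediately and the rest of your argument closes. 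Equivalently, your proof is correct under the hypotheses ``$B$ regular and $\Sigma_1(L_\alpha,B^+)$-collection.'' As written, however, the regularity claim is a genuine unjustified step --- it is exactly the separation half of admissibility, which the paper's replacement-style definition does not supply. (Note the $\alpha$-c.e.\ example does not refute the arrow itself: if $B \in \Sigma_1(L_\alpha)$ then $A \le_{w\alpha e} B$ already forces $A \in \Sigma_1(L_\alpha)$, whence $A \le_{\alpha e} B$ trivially; the failure is only in the justification.)
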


\subsubsection{Notions of regularity by strength}

\begin{rk}
We have the following \emph{strict} separation of the notions where $\alpha$-finiteness is the strongest condition and quasiregularity is the weakest:\\
$\alpha$-finite $\implies$ $\alpha$-computable $\implies$ megaregular $\implies$ regular $\implies$ quasiregular
\end{rk}

\subsection{Useful lemmas}
%done1, 29.3.2015
\begin{lemma}\label{lemma_sacks_lemma6}\footnote{From lemma 6 in \cite{sacks1963degrees} on p66.}
$A_0 \cap A_1 = \emptyset, A_{i \in \{0,1\}} \in \Sigma_1(L_\alpha, A_0 \sqcup A_1) \implies A_0 \sqcup A_1 \equiv_\alpha A_0 \oplus A_1$.
\end{lemma}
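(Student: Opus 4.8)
The plan is to prove the two $\le_\alpha$ reductions separately. Write $U := A_0 \sqcup A_1 = A_0 \cup A_1$ (the union being disjoint by hypothesis) and $J := A_0 \oplus A_1$. Recalling that $X \le_\alpha Y$ unfolds to $X \oplus \overline{X} \le_{\alpha e} Y \oplus \overline{Y}$, and that $\overline{J} = \overline{A_0} \oplus \overline{A_1}$ while $\overline{U} = \overline{A_0} \cap \overline{A_1}$, I would reduce each direction to checking a handful of $\le_{\alpha e}$ facts and then invoke the closure of $\le_{\alpha e}$ under finite joins, unions and intersections (all routine from \cref{defn_ae_reducibility}, using \cref{prop_alpha_finite_union} to keep the witnessing sets $\alpha$-finite).

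The direction $U \le_\alpha J$ needs no hypotheses, so I would dispatch it first: from $J \oplus \overline{J}$ one enumerates $U = A_0 \cup A_1$ by decoding the even and odd parts of $J$, and one enumerates $\overline{U} = \overline{A_0} \cap \overline{A_1}$ by firing on an element only once it has been enumerated into both $\overline{A_0}$ and $\overline{A_1}$; hence $U \oplus \overline{U} \le_{\alpha e} J \oplus \overline{J}$.

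The content lies in $J \le_\alpha U$, i.e. $J \oplus \overline{J} \le_{\alpha e} U \oplus \overline{U}$. Since $J \oplus \overline{J}$ is an $\alpha$-computable rearrangement of the fourfold join of $A_0, A_1, \overline{A_0}, \overline{A_1}$, it suffices to show each of these four sets is $\le_{\alpha e} U \oplus \overline{U}$. For the complements I would use disjointness to write $\overline{A_0} = A_1 \cup \overline{U}$ and $\overline{A_1} = A_0 \cup \overline{U}$ (if $x \notin A_0$ then either $x \notin U$, or, since $A_0, A_1$ partition $U$, $x \in A_1$), so these reduce once $A_0$ and $A_1$ do, because $\overline{U} \le_{\alpha e} U \oplus \overline{U}$ trivially. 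Everything therefore comes down to the key claim $A_i \le_{\alpha e} U \oplus \overline{U}$, which I would isolate as the lemma that any $B \in \Sigma_1(L_\alpha, C)$ satisfies $B \le_{\alpha e} C \oplus \overline{C}$: translate each positive literal $z \in C$ into $2z \in C \oplus \overline{C}$ and each negative literal $z \notin C$ into $2z+1 \in C \oplus \overline{C}$, turning the defining formula into a $\Sigma_1(L_\alpha, (C \oplus \overline{C})^+)$ formula in which every witness consults only an $\alpha$-finite set of queries; collecting these queries over any $\alpha$-finite $K_\gamma \subseteq B$ via $\Sigma_1$-collection (admissibility) and \cref{prop_alpha_finite_union} yields a single $\alpha$-finite positive use, which is exactly the datum needed to build the enumeration operator $W$ of \cref{defn_ae_reducibility}.

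I expect this last lemma to be the main obstacle, and the delicate point is that it must deliver a \emph{full} $\alpha$-enumeration reduction $\le_{\alpha e}$ with \emph{no} regularity assumption on $C$. Routing through \cref{prop_correspondence_wae_ae} would only yield $\le_{w\alpha e}$ and would additionally demand that $C \oplus \overline{C}$ be regular with megaregular positive part, hypotheses we do not have here. The direct query-bounding argument avoids this precisely because a single witness for $z \in B$ already consults only boundedly much of $C$; the one genuine use of the admissibility of $\alpha$ is in amalgamating the witnesses for all $x \in K_\gamma$ into one $\alpha$-finite use, and this is what makes the resulting reduction total rather than merely weak.
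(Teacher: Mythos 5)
Your overall architecture is the same as the paper's: the trivial direction $A_0 \sqcup A_1 \le_\alpha A_0 \oplus A_1$; then, for the converse, the disjointness identity $x \notin A_i \iff x \in A_{1-i} \lor x \notin A_0 \sqcup A_1$ (your $\overline{A_i} = A_{1-i} \cup \overline{U}$); and finally a translation of $\Sigma_1(L_\alpha, U)$-definability into an $\alpha$-enumeration reduction from $U \oplus \overline{U}$. The paper performs this last translation silently (``$x \in A_i$ recognizable\ldots''), whereas you isolate it as an explicit key lemma, claimed to hold \emph{unconditionally}: $B \in \Sigma_1(L_\alpha, C) \implies B \le_{\alpha e} C \oplus \overline{C}$. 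That is exactly where your proposal breaks.

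The gap is twofold, and both halves sit precisely where regularity hypotheses would be needed. First, ``a single witness consults only boundedly much of $C$'' does not give an $\alpha$-finite positive use. The set $Q$ of \emph{queried ordinals} is indeed $\alpha$-finite, but what the operator of \cref{defn_ae_reducibility} must exhibit is an $\alpha$-finite $K_\delta \subseteq C \oplus \overline{C}$ sufficient to verify the formula, and the canonical candidate $(C \cap Q) \oplus (\overline{C} \cap Q)$ lies in $L_\alpha$ exactly when $C$ is regular on $Q$ --- the failure of this is the definition of non-regularity. Worse, no $\alpha$-finite use need exist at all: take $\gamma < \alpha$ and $C \subseteq 2\gamma$ with $C \notin L_\alpha$ containing exactly one of $2\beta, 2\beta+1$ for each $\beta < \gamma$, and consider the $\Delta_0$ subformula $\forall z \in w\, \exists u \in z\, (u \in C)$ with $w := \{\{2\beta, 2\beta+1\} : \beta < \gamma\} \in L_\alpha$. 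The formula is true, but any $K_\delta \subseteq C \oplus \overline{C}$ verifying it positively must select the unique element of $C$ in each pair, hence must include a copy of $C$ itself, so it is not $\alpha$-finite; your operator's completeness fails for any $K_\gamma \subseteq B$ whose verification passes through such a subformula. Second, the amalgamation step ``via $\Sigma_1$-collection (admissibility)'' misapplies admissibility: the function $x \mapsto (\text{least witness for } x, \text{ its use})$ is $\Sigma_1$-definable only \emph{with the parameter} $C$, since which witness works depends on the oracle. Plain admissibility of $L_\alpha$ gives collection for $\Sigma_1(L_\alpha)$ relations only; collection for $\Sigma_1(L_\alpha, C)$ relations is admissibility of $\langle L_\alpha, C\rangle$, i.e.\ megaregularity, so \cref{prop_alpha_finite_union} has nothing $\alpha$-finite to act on. These two requirements are exactly the hypotheses of \cref{prop_correspondence_wae_ae} (regularity to pass from $\Sigma_1$-definability to $\le_{w\alpha e}$, megaregularity of the positive part to pass from $\le_{w\alpha e}$ to $\le_{\alpha e}$); your ``direct query-bounding argument'' does not avoid them, it re-derives the need for them. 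An unconditional elementary proof of your key lemma would render those hypotheses redundant for every oracle of the form $C \oplus \overline{C}$, which should have been a red flag.

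To be fair, the paper's own two-line proof makes the same silent jump from recognizability in $\langle L_\alpha, U\rangle$ to $\le_\alpha$-reducibility, and is only safe where the lemma is actually invoked (\cref{lemma_jockusch_strongly_non-re_semicomputable}), since there the sets are $\alpha$-c.e.\ and, by \cref{thm_sacks_regular_set_existence}, may be taken regular, so the needed regularity facts are available. Your instinct to make the translation step explicit was the right one; but as written, the justification asserts two false principles, and a correct version of your key lemma must either carry regularity/megaregularity hypotheses on $U = A_0 \sqcup A_1$ or proceed by a genuinely different argument.
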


\begin{proof}
$A_0 \sqcup A_1 \le_\alpha A_0 \oplus A_1$ trivially. Let $i \in \{0,1\}$.
For $A_0 \oplus A_1 \le_\alpha A_0 \sqcup A_1$:
$x \in A_i$ recognizable by $A_i \in \Sigma_1(L_\alpha, A_0 \sqcup A_1)$.
Also $x \not\in A_i$ is recognizable since $x \not\in A_i \iff x \in A_{1-i} \lor x \not\in A_0 \sqcup A_1$ by disjointness and both $x \in A_{1-i}, x \not\in A_0 \sqcup A_1$ are recognizable from $A_0 \sqcup A_1$.
Hence $A_0 \sqcup A_1 \equiv_\alpha A_0 \oplus A_1$.
\end{proof}

%done2
The lemma implies that if $A_0$, $A_1$ are disjoint $\alpha$-incomparable $\alpha$-computably enumerable sets, then $A_0 \sqcup A_1 \equiv_\alpha A_0 \oplus A_1$ (proposition 3.3 in \cite{soare1987recursively}).

\begin{lemma}\label{lemma_numbering_of_pairs_of_alpha_fin_sets}
There exists an $\alpha$-computable function $g : \alpha \times \alpha \times \alpha \to \alpha$ st $D_\eta:=\{x | g(\eta,x,1)=1\} \in L_\alpha$, $E_\eta:=\{x | g(\eta,x,2)=1\} \in L_\alpha$ and for every pair $(\hat{D},\hat{E})$ of $\alpha$-finite subsets of $\alpha$ there is an index $\eta < \alpha$ st $D_\eta = \hat{D}$ and $E_\eta = \hat{E}$.

Therefore we can $\alpha$-effectively number the pairs of the $\alpha$-finite subsets of $\alpha$ by the indices of $\alpha$.
\end{lemma}

\begin{proof}
Note that there are $\alpha$-computable bijections $j:\alpha \to L_\alpha$ and $f:\alpha \to \alpha \times \alpha$.
Let $\pi_1$ and $\pi_2$ be the projections.
Define $g(\eta,x,k) := [x \in j \circ \pi_k \circ f(\eta)]$.
Then $g$ is the required $\alpha$-computable function.
\end{proof}

\begin{lemma}\label{lemma_alpha_computable_indexing_of_computed_sets}
Let $i,j,k: \alpha \times \alpha \to \alpha$ be any $\alpha$-computable numberings of $\alpha$-finite subsets of $\alpha$. Then:\\
i) There is an $\alpha$-computable function $u:\alpha \to \alpha$ st\\ $\forall \gamma < \alpha. \bigcup_{x \in j(\gamma)} i(x) = k(u(\gamma))$.\\
ii) There is an $\alpha$-computable function $v:\alpha \times \alpha \to \alpha$ st\\
$\forall \gamma, \delta < \alpha. k(v(\gamma,\delta))=i(\gamma) \oplus j(\delta)$.\\
iii) There exist $\alpha$-computable functions $i_{\pi_1},i_{\pi_2}:\alpha \to \alpha$ st\\
$\forall l \in \{1,2\} \forall \gamma < \alpha. k(i_{\pi_l}(\gamma)) = \{x_l : \langle x_1, x_2 \rangle \in i(\gamma)\}$.\\
iv) There exists an $\alpha$-computable function $i_{p_2} : \alpha \to \alpha$ st\\
$\forall \gamma < \alpha. k(i_{p_2}(\gamma)) = i(\gamma) \times j(\gamma)$.\\
v) There exists an $\alpha$-computable function $w:\alpha \times \alpha \to \alpha$ st if $\gamma, \delta < \alpha$, then $k(w(\gamma, \delta)) = \{\langle x, y \rangle : x \in j(\delta) \land y \in j(\gamma) \land y \in i(x)\}$.\\
vi) There exists a function $t_{i,j}:\alpha \to \alpha \in \Sigma_1(L_\alpha)$ st $\forall \gamma < \alpha. i(\gamma)=j(t_{i,j}(\gamma))$.\\
vii) Let $K(\gamma) := \bigcup_{x \in j(\gamma)} i(x)$.
Then there exists a function $s_{i,j}:\alpha \to \alpha \in \Sigma_1(L_\alpha)$ st
$\forall \gamma < \alpha. s_{i,j}(\gamma)=$
$\begin{cases}
0 & K(\gamma)=\emptyset\\
\mathtt{sup}(K(\gamma)) & K(\gamma)\not=\emptyset
\end{cases}$.
\qed
\end{lemma}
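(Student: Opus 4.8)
The plan is to treat all seven parts with one uniform recipe, since each asserts that some set-theoretic operation on $\alpha$-finite sets is reflected by an $\alpha$-computable (or merely $\Sigma_1(L_\alpha)$) operation on their indices. First I would observe that, because $i,j,k$ are $\alpha$-computable numberings, the maps $\gamma \mapsto i(\gamma)$ etc.\ are $\Sigma_1(L_\alpha)$-definable as functions into $L_\alpha$ (via the bijection $b$ of \cref{prop_bij_alpha_to_l_alpha}), and equality of two $\alpha$-finite sets presented by indices is $\Delta_1(L_\alpha)$. The recipe is then: (1) decode the input index or indices into the actual $\alpha$-finite sets they name; (2) perform the required set operation inside $L_\alpha$, obtaining a set $S$ that is again $\alpha$-finite; (3) search $\Sigma_1$-ly for an index $\delta$ with $k(\delta)=S$ (or $j(\delta)=S$), returning the least such $\delta$. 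Because $k$ (resp.\ $j$) is surjective onto the $\alpha$-finite sets, this search always halts, so the resulting function is a total $\Sigma_1(L_\alpha)$ function, hence $\alpha$-computable by definition; this already covers the $\alpha$-computability claims in (i)--(v) and the weaker $\Sigma_1$ claims in (vi)--(vii).

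What differs between the parts is only the closure fact invoked at step (2) to guarantee that $S$ is $\alpha$-finite. For (i) and (vii) the set $\bigcup_{x\in j(\gamma)} i(x)$ is an $\alpha$-finite union of $\alpha$-finite sets, so its $\alpha$-finiteness is exactly \cref{prop_alpha_finite_union}. For (ii) the join $i(\gamma)\oplus j(\delta)$ stays $\alpha$-finite since $L_\alpha$ is closed under the $\alpha$-computable pairing and under finite unions. For (iii) the projections $\{x_l : \langle x_1,x_2\rangle \in i(\gamma)\}$ are images of the $\alpha$-finite set $i(\gamma)$ under the $\Sigma_1$ projection maps and so lie in $L_\alpha$. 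For (iv) and (v) the Cartesian product $i(\gamma)\times j(\gamma)$ and the relational set in (v) are $\alpha$-finite because $L_\alpha$ is closed under products of $\alpha$-finite sets and under $\Delta_1$-separation (using \cref{prop_bij_to_n_fold_product} to code pairs). For (vi) the translation $t_{i,j}(\gamma)$ is just the least $j$-index of the $\alpha$-finite set $i(\gamma)$, again found by $\Sigma_1$ search. Finally in (vii), once $K(\gamma)$ is known to be $\alpha$-finite, it is a bounded subset of $\alpha$, so $\sup(K(\gamma))<\alpha$ and the case-split defining $s_{i,j}$ is $\Sigma_1$.

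The main obstacle, and the only genuinely nontrivial point, is establishing totality of the index-search, i.e.\ that each operation really lands inside $L_\alpha$; for the unions in (i) and (vii) this is where admissibility enters through \cref{prop_alpha_finite_union}, while the remaining parts reduce to the more elementary closure of $L_\alpha$ under pairing, products, and bounded separation. The secondary concern is uniformity: I must check that decoding, the set operation, and the re-encoding are each uniformly $\Sigma_1(L_\alpha)$ in the index, so that the composite is a single function of $\gamma$ (and $\delta$) rather than a family of definitions. Both concerns are handled by the explicit numbering machinery of \cref{lemma_numbering_of_pairs_of_alpha_fin_sets}, which supplies the $\alpha$-computable decoding and pairing used throughout.
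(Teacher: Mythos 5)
Your recipe (decode the given indices to elements of $L_\alpha$, apply a closure fact inside $L_\alpha$, then $\Sigma_1$-search for a $k$-index of the result) is the natural one, and the closure facts you invoke are correct: \cref{prop_alpha_finite_union} for the unions in (i) and (vii), admissibility/replacement for joins and projection images in (ii)--(iii), and products plus $\Delta_1$-separation for (iv)--(v). The paper itself leaves this proof as routine, and under the reading of ``numbering'' discussed below your argument does fill it in.

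The gap is your opening ``observation'', which is in fact the only nontrivial point of the lemma and does not follow from the stated hypotheses. If all one knows is that $i:\alpha\times\alpha\to\alpha$ is a total $\alpha$-computable function whose sections $i(\gamma):=\{x : i(\gamma,x)=1\}$ are $\alpha$-finite and exhaust the $\alpha$-finite sets, then the decode map $\gamma\mapsto i(\gamma)\in L_\alpha$ need \emph{not} be $\Sigma_1(L_\alpha)$, and equality of indexed sets need not be $\Delta_1$. Already at $\alpha=\omega$ there is a counterexample: let $i(\langle e,d\rangle,x)=1$ iff $x$ lies in the symmetric difference of the canonical finite set $D_d$ and the set $\{s : \text{machine } e \text{ halts in exactly } s \text{ steps}\}$. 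This $i$ is total computable and surjective onto the finite sets, yet any computable $t_{i,j}$ as in your part (vi), or any computable $u$ as in part (i), would decide the halting problem; moreover ``$i(\gamma)=i(\delta)$'' is properly $\Pi_1$, not $\Delta_1$, since ruling out extra elements needs an unbounded universal quantifier --- so your step (3) search cannot be $\Sigma_1$-verified against such a numbering either. The lemma is therefore true only under the stronger (clearly intended) reading that an ``$\alpha$-computable numbering'' is one whose index-to-set map into $L_\alpha$ is itself $\alpha$-computable, which holds for every numbering the paper actually uses, since they are all manufactured directly from the bijection $b$ of \cref{prop_bij_alpha_to_l_alpha} as in \cref{lemma_numbering_of_pairs_of_alpha_fin_sets}. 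You should make that reading explicit (or note the counterexample); once it is in place, your decode step is definitional, set equality becomes $\Delta_0$ in $L_\alpha$, the least-index search has $\Sigma_1$ graph and is total by surjectivity, and the rest of your argument goes through as written.
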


%%%%%%%%%%%%%%%%%%%%%%%%%%%%%%%%%%%%%%%%%%%%%%%%%%%%%%%%%%%%%%%%%%%%%%%%%%%%%
\section{Semicomputability}\label{section_semicomputability}%%%%%%%%%%%%%%%%%
%%%%%%%%%%%%%%%%%%%%%%%%%%%%%%%%%%%%%%%%%%%%%%%%%%%%%%%%%%%%%%%%%%%%%%%%%%%%%

The goal of this section is to lift the necessary results of Jockusch \cite{jockusch1968semirecursive} on semicomputable sets from the level $\omega$ to a level $\alpha$.

\begin{defn}A set $A \subseteq \alpha$ is \emph{$\alpha$-semicomputable} iff there exists a total $\alpha$-computable function $s_A:\alpha \times \alpha \to \alpha$ called a \emph{selector function} satisfying:\\
i)$\forall x, y \in \alpha. s_A(x, y) \in \{x, y\}$,\\
ii)$\forall x, y \in \alpha [\{x ,y\} \cap A \not= \emptyset \implies s_A(x,y) \in A]$.\\
Denote by $\asc$ the class of $\alpha$-semicomputable sets.
\end{defn}

\begin{fact}(Semicomputability closure)\\
%i) $A \subseteq B \land B \in \asc \implies A \in \asc$,\\
i) $A \in \asc \iff \overline{A} \in \asc$,\\
ii) $A \oplus B \in \asc \implies A \in \asc \land B \in \asc$.
%iii) $A \cup B \in \asc \implies A \oplus B \in \asc$.
\end{fact}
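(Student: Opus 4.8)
The plan is to prove both parts by explicitly constructing selector functions, each built from the selector function supplied by the hypothesis, and then verifying the two defining conditions of $\alpha$-semicomputability.

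For part (i), the plan is to build a selector $s_{\overline{A}}$ for $\overline{A}$ out of a selector $s_A$ for $A$ by \emph{flipping} the choice. First I would define, for $x \neq y$, the value $s_{\overline{A}}(x,y)$ to be the element of $\{x,y\}$ \emph{not} selected by $s_A(x,y)$, and set $s_{\overline{A}}(x,x)=x$. This is total and $\alpha$-computable, since $s_A$ is $\alpha$-computable and both deciding $x=y$ and returning ``the other element'' are $\alpha$-computable operations. Condition (i) of the selector definition is then immediate. For condition (ii) I would assume $\{x,y\} \cap \overline{A} \neq \emptyset$ and argue by cases: if both $x,y \in \overline{A}$ the conclusion is trivial; if exactly one lies in $\overline{A}$, say $y \notin A$ while $x \in A$, then $\{x,y\} \cap A = \{x\}$ forces $s_A(x,y)=x$, hence $s_{\overline{A}}(x,y)=y \in \overline{A}$. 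Since $\overline{\overline{A}}=A$, the reverse implication follows by applying the same construction to $\overline{A}$, which yields the full biconditional.

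For part (ii), the plan is to extract a selector for $A$ (and symmetrically for $B$) from a selector $s_{A\oplus B}$ by embedding queries into the even coordinates of the join. Recalling $A \oplus B = \{2a : a\in A\} \cup \{2b+1 : b \in B\}$, I would define $s_A(x,y)$ by computing $s_{A\oplus B}(2x,2y)$, which by the selector property lies in $\{2x,2y\}$, and then returning its decoding: output $x$ if the value is $2x$, and $y$ if it is $2y$. This is $\alpha$-computable because the even-embedding $a \mapsto 2a$, its inverse on even values, and $s_{A\oplus B}$ are all $\alpha$-computable. To verify condition (ii) I would use the equivalence $a \in A \iff 2a \in A\oplus B$: if $\{x,y\}\cap A \neq \emptyset$, then $\{2x,2y\} \cap (A\oplus B) \neq \emptyset$, so $s_{A\oplus B}(2x,2y)$ lands in $A \oplus B$, and being even it equals $2a$ for some $a \in \{x,y\}\cap A$; thus $s_A(x,y)=a \in A$. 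The construction for $B$ is identical using the odd-embedding $b \mapsto 2b+1$.

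The constructions are short and the resulting selector functions are manifestly total and $\alpha$-computable, so I expect no serious obstacle. The only points requiring care are bookkeeping: in part (i), handling the case $x=y$ separately so that ``the other element'' is well defined; and in part (ii), invoking that the even/odd embeddings underlying $\oplus$ admit $\alpha$-computable encoding and decoding, which is among the stated usual properties of the computable join. The crux is therefore the case-by-case verification of condition (ii), which in both parts reduces to the elementary membership equivalence and to the observation that the selector's choice is forced whenever exactly one of the two inputs lies in the relevant set.
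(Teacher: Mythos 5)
Your proof is correct. The paper states this closure fact without proof, and your constructions (flipping the selector for the complement, with the $x=y$ case handled separately, and pulling back the join's selector along the even/odd embeddings) are exactly the standard argument the paper leaves implicit; both verifications of the selector conditions are sound.
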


\begin{defn}(Index set)\\
An \emph{index set} for a set $A \subseteq \alpha$ denoted as $A_I$ is a set of \emph{all} indices of $\alpha$-finite subsets of $A$, i.e. $A_I := \{\gamma < \alpha : K_\gamma \subseteq A\}$. 
\end{defn}

\begin{prop}(Semicomputability of an index set)\\
For every set $A \subseteq \alpha$, its index set $A_I$ is $\alpha$-semicomputable.
\end{prop}
\begin{proof}
Define the selector function of $A_I$ as $s_{A_I} := \{ \langle \gamma, \delta, \rangle : K_\gamma \subseteq K_\delta \}$. The function $s_{A_I}$ is $\alpha$-computable as required.
\end{proof}

\begin{defn}\label{defn_alpha_rationals_order}(Binary ordering)\\
Define $<_b \subseteq \mathcal{P}(\alpha) \times \mathcal{P}(\alpha)$ and $\le_b \subseteq \mathcal{P}(\alpha) \times \mathcal{P}(\alpha)$ to be numerical orderings on the binary representation of the compared sets:
\begin{itemize}
\item $A <_b B :\iff \exists \beta \in \alpha [\beta \not\in A \land \beta \in B \land A \cap \beta = B \cap \beta]$,
\item $A \le_b B :\iff A <_b B \lor A = B$.
\end{itemize}
\end{defn}

%done2
\begin{rk}\label{rk_b_ordering_computable}
The restrictions of the orderings $<_b$ and $\le_b$ to $\alpha$-finite sets are first-order definable and $\alpha$-computable since an $\alpha$-finite set is bounded.
\end{rk}

\begin{prop}(Properties of binary ordering)\\
Let $\lhd \in \{<, \le\} $, then:\\
i) $<_b$ is a strict total order,\\
ii) $\le_b$ is a total order,\\
iii) $([0,1], \lhd_{\hreals}) \cong (\mathcal{P}(\alpha), \lhd_b))$ where $\hreals$ is an appropriate model of the hyperreal numbers,\\
iv) $A \lhd_b B \iff \overline{B} \lhd_b \overline{A}$.
\end{prop}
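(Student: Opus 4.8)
The plan is to reduce all four parts to a single structural fact. Since $\alpha$ is well-ordered, whenever $A \neq B$ the symmetric difference $A \triangle B$ has a least element, the \emph{point of first difference} $\delta(A,B) := \min(A \triangle B)$. First I would observe that the witness $\beta$ appearing in $A <_b B$ is forced to equal $\delta(A,B)$: the clause $A \cap \beta = B \cap \beta$ says that $A$ and $B$ agree strictly below $\beta$, so no element of $A \triangle B$ lies below $\beta$, while $\beta \in B \setminus A \subseteq A \triangle B$; hence $\beta = \min(A \triangle B)$. This yields the characterization
\[
A <_b B \iff A \neq B \ \land\ \delta(A,B) \in B \setminus A .
\]

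From this, part i) is routine. Irreflexivity holds because $A <_b A$ would require $\delta(A,A)$ while $A \triangle A = \emptyset$. For totality/trichotomy, if $A \neq B$ then $\delta(A,B)$ lies in exactly one of $B \setminus A$ or $A \setminus B$, giving exactly one of $A <_b B$, $B <_b A$, and both exclude $A = B$. For transitivity, given $A <_b B$ and $B <_b C$ with witnesses $\beta_1 = \delta(A,B)$ and $\beta_2 = \delta(B,C)$, I would compare $\beta_1$ and $\beta_2$: the case $\beta_1 = \beta_2$ is impossible, as it forces simultaneously $\beta_1 \in B$ and $\beta_1 \notin B$; in each remaining case the smaller witness serves as $\delta(A,C)$ and witnesses $A <_b C$, because the agreement of $B$ with $C$ (resp. of $A$ with $B$) below the larger witness transports the inequality. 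Part ii) is then immediate, since $\le_b$ is the reflexive closure of $<_b$: reflexivity holds by definition, antisymmetry and totality follow from trichotomy, and transitivity follows from transitivity of $<_b$ together with the trivial equality cases.

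For part iv) I would use the characterization directly. Complementation fixes the point of first difference, $\delta(\overline{A}, \overline{B}) = \delta(A,B)$, because $A \cap \beta = B \cap \beta$ iff $\overline{A} \cap \beta = \overline{B} \cap \beta$, and it swaps membership at that point, so $\delta(A,B) \in B \setminus A$ iff $\delta(A,B) \in \overline{A} \setminus \overline{B}$. Hence $A <_b B \iff \overline{B} <_b \overline{A}$, and applying this to $\overline{A}, \overline{B}$ with double complementation gives the converse; adjoining the equality case $A = B \iff \overline{A} = \overline{B}$ extends the statement to $\le_b$.

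The remaining, and genuinely delicate, part is iii). The idea is to send each $A \subseteq \alpha$ to its transfinite binary expansion $f(A) := \sum_{\beta \in A} 2^{-(\beta+1)}$, evaluated inside a hyperreal field $\hreals$ large enough to interpret such $\alpha$-indexed sums; by construction the numerical comparison of two expansions is decided exactly at the first index where their digit strings differ, which is precisely $<_b$, so the order-preservation follows from the first-difference characterization above. The hard part will be surjectivity onto $[0,1]$ together with the classical "dyadic carry" collapse: over the standard reals the lexicographic order on $2^\omega$ has adjacent pairs, such as $\mathtt{0}\,\mathtt{1}\,\mathtt{1}\,\mathtt{1}\cdots$ and $\mathtt{1}\,\mathtt{0}\,\mathtt{0}\,\mathtt{0}\cdots$, where $[0,1]$ is instead dense, so no isomorphism can exist over $\mathbb{R}$. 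This is exactly where the phrase \emph{appropriate model} does the work: one must choose $\hreals$, and the corresponding interpretation of $[0,1]$, so that every $\alpha$-indexed digit string names a distinct element and every element of the hyperreal $[0,1]$ is so named, making $f$ a bijection that preserves and reflects order. I would therefore present iii) as an order-theoretic identification via $f$, verifying order-preservation from the characterization and locating the entire difficulty in pinning down the hyperreal model that eliminates the carry identifications.
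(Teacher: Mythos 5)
Your proofs of i), ii), and iv) are correct, and they take a genuinely different route from the paper. The paper simply declares i)--iii) trivial and then derives iv) \emph{from} iii), by viewing $\mathcal{P}(\alpha)$ as the hyperreal interval $[0,1]$ and complementation as $x \mapsto 1-x$. You instead prove iv) directly from the first-difference characterization $A <_b B \iff A \neq B \,\land\, \min(A \triangle B) \in B \setminus A$, never invoking iii); your transitivity analysis (comparing the two witnesses, the equal case being contradictory) and your observation that complementation fixes $\min(A \triangle B)$ while swapping the side on which it lies are exactly right. This is a real gain, not just a stylistic difference: the paper's argument for iv) is only as sound as iii), and iii) is precisely the weak point, so your version of iv) is the only self-contained, unconditional proof of it between the two texts.

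On iii), however, there is a genuine gap in your plan, and your own argument shows why it cannot be repaired in the way you suggest. You note that over $\mathbb{R}$ the dyadic carry identification (e.g.\ $\mathtt{0111}\cdots$ versus $\mathtt{1000}\cdots$) blocks any isomorphism, because these strings are \emph{adjacent} in $<_b$ (take $B = \alpha \setminus \{0\}$ and $A = \{0\}$: nothing lies strictly between them) while $[0,1]$ is densely ordered. But density is not a special feature of $\mathbb{R}$: every ordered field is densely ordered, since $a < (a+b)/2 < b$ whenever $a < b$. So the identical obstruction applies to every hyperreal field $\hreals$. If you choose a saturated model in which adjacent strings name distinct elements (differing by an infinitesimal), then the field interval between those two values contains points outside the range of $f$, and surjectivity fails; there is no choice of ``appropriate model'' making $f$ a bijection onto $[0,1]$. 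In short, a linear order with jumps is not isomorphic to an interval of any ordered field, so iii) as stated is not provable at all --- by you or by the paper, which offers no argument for it and then rests iv) on it. The honest repair is to weaken iii) to an order \emph{embedding} of $(\mathcal{P}(\alpha), <_b)$ into a sufficiently saturated $\hreals$ (which your map $f(A) = \sum_{\beta \in A} 2^{-(\beta+1)}$, suitably interpreted, can realize), and to keep your direct proof of iv), which needs none of this.
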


\begin{proof}
i), ii), iii) are trivial. To prove iv), use iii) and consider $\mathcal{P}(\alpha)$ as the interval $[0,1]$ from the field of hyperreals, where $0 := \emptyset$ and $1 := \alpha$. Then:
$\overline{B} \lhd_b \overline{A} \iff 1 - B \lhd_b 1 - A \iff -B \lhd_b -A \iff A \lhd_b B$.
\end{proof}

\begin{fact}(Binary and subset ordering)\\
i) $A \subset B \implies A <_b B$,\\
ii) $A \subseteq B \implies A \le_b B$,\\
iii) $A = B \iff A \equiv_b B$.
\end{fact}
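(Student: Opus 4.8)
The plan is to derive everything from part (i), which carries the genuine content; parts (ii) and (iii) then follow formally from the fact, established in the preceding proposition, that $\le_b$ is a total order. The single underlying idea is that $\alpha$ is an ordinal and hence well-ordered, so I can always locate the least point at which two sets differ.

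For part (i), suppose $A \subset B$ properly, so that $B \setminus A \neq \emptyset$. By well-ordering, this set has a least element, which I call $\beta$. I would then check that $\beta$ witnesses $A <_b B$ in the sense of \cref{defn_alpha_rationals_order}. By construction $\beta \in B$ and $\beta \notin A$, so it remains only to verify $A \cap \beta = B \cap \beta$. The inclusion $A \cap \beta \subseteq B \cap \beta$ is immediate from $A \subseteq B$. For the reverse inclusion, take any $\gamma \in B$ with $\gamma < \beta$: by minimality of $\beta$ in $B \setminus A$ we cannot have $\gamma \in B \setminus A$, and since $\gamma \in B$ this forces $\gamma \in A$. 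Hence $A \cap \beta = B \cap \beta$, and so $A <_b B$.

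For part (ii), I would split on whether the inclusion is proper. If $A = B$, then $A \le_b B$ holds by the $A = B$ disjunct in the definition of $\le_b$; if $A \subset B$ properly, then part (i) gives $A <_b B$, and again $A \le_b B$. For part (iii), reading $\equiv_b$ as mutual $\le_b$, the forward direction is immediate from reflexivity of $\le_b$. For the converse, $A \equiv_b B$ unfolds to $A \le_b B$ and $B \le_b A$; since $\le_b$ is a total order it is antisymmetric, which forces $A = B$.

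I expect no serious obstacle. The only step requiring any care is the choice of $\beta$ as the least element of $B \setminus A$ in part (i); this relies purely on $\alpha$ being well-ordered and on no computability-theoretic property, so the argument is self-contained once that minimum is invoked.
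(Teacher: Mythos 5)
Your proof is correct: taking $\beta$ to be the least element of $B \setminus A$ is exactly the right witness for $A <_b B$, and parts (ii) and (iii) do follow formally from (i) together with the antisymmetry of $\le_b$ established in the preceding proposition. The paper states this as a fact with no proof at all, so there is nothing to compare against; your argument is the standard one the author evidently had in mind.
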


\begin{note}
If $A \le_b C$ and $B \le_b C$, is it true that $A \cup B \le_b C$?

No. Consider $A=011..., B=100..., C=110...$. Then $A \cup B=111...$. Thus $A \le_b C$ and $B \le_b C$, but $\neg A \cup B \le_b C$.
\end{note}

\begin{defn}\label{defn_sc_segments}
Given a set $A$ define $L_A := \{x \in \alpha : K_x \le_b A \}, R_A := \overline{L_A}$.
\end{defn}

\begin{rk}\label{rk_la_ra_def}
If $A \not\in L_\alpha$, then:
\begin{itemize}
\item $L_A = \{ x < \alpha : K_x <_b A \}$ are $\alpha$-finite sets \emph{left} of $A$,
\item $R_A = \{ x < \alpha : A <_b K_x \}$ are $\alpha$-finite sets \emph{right} of $A$.
\end{itemize}
\end{rk}

\begin{fact}\label{fact_properties_of_left_right_af_sets}(Properties of left/right $\alpha$-finite sets)\\
Let $A \subseteq \alpha$ and $\beta, \gamma, \delta < \alpha$. Then:\\
i)  $K \in L_\alpha \land K_\delta = \bigcup_{\gamma \in K} K_\gamma \land \delta \in L_A \implies K \subseteq L_A$,\\
ii) $\beta \in L_A \land \gamma \in R_A \land K_\beta \cap \delta = K_\gamma \cap \delta \implies K_\beta \cap \delta \subseteq A$.
\end{fact}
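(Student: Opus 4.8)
The plan is to treat both parts purely order-theoretically, exploiting that $\le_b$ is a total order, that $<_b$ is strict, and that the subset relation refines the binary order (the Fact on binary and subset ordering).

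For part (i), I would proceed directly. Since $\delta \in L_A$ we have $K_\delta \le_b A$. For each index $\gamma \in K$ the set $K_\gamma$ is one of the sets appearing in the union $K_\delta = \bigcup_{\gamma \in K} K_\gamma$, so $K_\gamma \subseteq K_\delta$; by the Fact on binary and subset ordering this gives $K_\gamma \le_b K_\delta$, and transitivity of $\le_b$ then yields $K_\gamma \le_b A$, i.e. $\gamma \in L_A$. As $\gamma \in K$ was arbitrary, $K \subseteq L_A$. This part is routine.

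For part (ii), the heart of the matter is a sandwiching observation: if two $\alpha$-finite sets agree on their first $\delta$ coordinates and $A$ lies between them in $\le_b$, then $A$ must agree with them there too. Concretely, from $\beta \in L_A$ and $\gamma \in R_A$ I would first record $K_\beta \le_b A <_b K_\gamma$, using that $\gamma \in R_A = \overline{L_A}$ means $\neg(K_\gamma \le_b A)$, hence $A <_b K_\gamma$ by totality. I then claim $A \cap \delta = K_\beta \cap \delta$. Supposing not, take the least $\xi < \delta$ at which $A$ and $K_\beta$ differ, so that $A \cap \xi = K_\beta \cap \xi$; since $\xi < \delta$ and $K_\beta \cap \delta = K_\gamma \cap \delta$, this common initial segment also equals $K_\gamma \cap \xi$, and moreover $\xi \in K_\beta \iff \xi \in K_\gamma$. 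Splitting into the cases $\xi \in A \setminus K_\beta$ and $\xi \in K_\beta \setminus A$, the definition of $<_b$ (with witness $\xi$) produces $K_\gamma <_b A$ in the first case and $A <_b K_\beta$ in the second, each contradicting the chain $K_\beta \le_b A <_b K_\gamma$ since one cannot have both $x \le_b y$ and $y <_b x$. Hence $A \cap \delta = K_\beta \cap \delta$, and therefore $K_\beta \cap \delta = A \cap \delta \subseteq A$, as required.

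The main obstacle I anticipate is the bookkeeping in part (ii): correctly verifying that at the witnessing coordinate $\xi$ the three sets $A$, $K_\beta$, $K_\gamma$ share the same strict initial segment below $\xi$, so that $\xi$ genuinely witnesses the unwanted strict inequality in the appropriate direction. Everything else is a direct unwinding of the definitions of $L_A$, $R_A$ and $<_b$ together with transitivity and strictness of the binary order; no appeal to admissibility or to the $\alpha$-finiteness machinery beyond the definitions is needed.
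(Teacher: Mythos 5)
Your proof is correct: part (i) follows exactly as you say from the subset-ordering fact plus transitivity of $\le_b$, and your least-difference ("sandwiching") argument in part (ii), using $K_\beta \le_b A <_b K_\gamma$ and the agreement of $K_\beta$ and $K_\gamma$ below $\delta$ to force $A \cap \delta = K_\beta \cap \delta$, is sound. The paper states this as a \emph{fact} with no proof at all, so there is nothing to compare against; your argument is the natural direct verification the paper leaves implicit, and it correctly uses only the definitions of $L_A$, $R_A$, $<_b$ and the stated order properties.
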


\begin{lemma}\label{lemma_la_ra_are_sc}
For any $A \subseteq \alpha$ the sets $L_A, R_A$ are $\alpha$-semicomputable.
\end{lemma}

\begin{proof}
$L_A$ is $\alpha$-semicomputable since it has an $\alpha$-computable selector function
$s:= \{(x,y) : K_x \le_b K_y\} \cup \{(y,x) : K_x >_b K_y\}$ by remark \ref{rk_b_ordering_computable}.
\end{proof}

\begin{lemma}\label{lemma_sc_correspondence}
Let $A \subseteq \alpha$ be a quasiregular set, then $A \equiv_\alpha L_A \equiv_\alpha R_A$.
\end{lemma}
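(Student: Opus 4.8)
The plan is to split the claim into the three relations it bundles together, and to observe first that $L_A \equiv_\alpha R_A$ is essentially free. Since $R_A = \overline{L_A}$ and every set is $\alpha$-equivalent to its complement (indeed $X \oplus \overline{X} \le_{\alpha e} \overline{X} \oplus X$ via the $\alpha$-computable swap of the two halves of the join, and symmetrically), we get $L_A \equiv_\alpha R_A$ at once. It therefore suffices to prove $A \equiv_\alpha L_A$, and I would do this by the two reductions $L_A \le_\alpha A$ and $A \le_\alpha L_A$.

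For $L_A \le_\alpha A$ I would exhibit $L_A \oplus R_A \le_{\alpha e} A \oplus \overline{A}$ directly from the first-difference description of $<_b$. Assuming $A \notin L_\alpha$ (otherwise everything is $\alpha$-computable and the lemma is trivial), $x \in L_A$ iff $K_x <_b A$, i.e. iff there is $\beta$ with $\beta \in A$, $\beta \notin K_x$ and $A \cap \beta = K_x \cap \beta$; the last equality is equivalent to the conjunction $K_x \cap \beta \subseteq A$ and $\beta \setminus K_x \subseteq \overline{A}$. Each of these is a condition "this particular $\alpha$-finite set is contained in the oracle", so together with $\beta \in A$ they form exactly the positive clauses an $\alpha$-enumeration operator may use; the mirror description $\beta \in K_x \wedge \beta \in \overline{A}$ handles $R_A$. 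This direction uses no regularity hypothesis whatsoever.

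The substantive direction is $A \le_\alpha L_A$, i.e. $A \oplus \overline{A} \le_{\alpha e} L_A \oplus R_A$, and this is where quasiregularity is spent. The device is to recover, for each $\beta < \sup(A)$, the true initial segment $s_\beta := A \cap \beta$, which by quasiregularity lies in $L_\alpha$. I would certify a candidate $\alpha$-finite $s \subseteq \beta$ as being $s_\beta$ by the positive clause "$s \in L_A$, and $s \cup \{\gamma\} \in R_A$ for every $\gamma \in \beta \setminus s$"; the second part is a single $\alpha$-finite-subset condition on $R_A$, since $\{s \cup \{\gamma\} : \gamma \in \beta \setminus s\}$ is $\alpha$-finite. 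The key sublemma is that these clauses force $s = A \cap \beta$: from $s \le_b A$ together with the fact that $A$ cannot be $\le_b$-pushed past any one-point extension of $s$ inside $\beta$, the first disagreement between $s$ and $A$ is forced to occur at or above $\beta$. Once $s = A \cap \beta$ is pinned down, the bit at $\beta$ is read off by one further comparison: $\beta \in A$ iff $s \cup \{\beta\} \in L_A$, and $\beta \notin A$ iff $s \cup \{\beta\} \in R_A$. This correctly enumerates the membership and non-membership parts of $A \oplus \overline{A}$ for all $\beta < \sup(A)$.

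The one gap left by this device is the tail $[\sup(A), \alpha)$ when $A$ is bounded: for $\beta \ge \sup(A)$ the segment $A \cap \beta$ equals $A$, which is not $\alpha$-finite, so no finite witness certifies it and the clauses above rightly never fire there. I expect this to be the main obstacle. It is resolved by noting that an $\alpha$-enumeration reduction need not be uniform in $A$: its operator is an arbitrary $\Sigma_1(L_\alpha)$ set and may therefore use the parameter $\lambda := \sup(A) \in L_\alpha$. Because the tail is co-$\alpha$-finite, I would add the unconditional clauses throwing every $\beta \ge \lambda$ into the non-membership part of the output; these demand nothing of the oracle and are $\Delta_0(L_\alpha)$ in $\lambda$. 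A symmetric remark — hardcoding the single point $\max(A)$ in the case it exists and is a limit, where $A \cap \max(A)$ again escapes $L_\alpha$ — covers the only other degenerate point. Combining these three families of clauses produces an $\alpha$-c.e. $W$ with $A \oplus \overline{A} = W(L_A \oplus R_A)$, which gives $A \le_\alpha L_A$ and completes the lemma.
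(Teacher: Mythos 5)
There is a genuine gap, and it is in the direction you yourself identify as substantive. Your certification of initial segments in the proof of $A \le_\alpha L_A$ is unsound: the ``key sublemma'' is false. Take $\alpha=\omega$, $A=\{0,2,4,6,\dots\}$, $\beta=3$, and the candidate $s=\{1,2\}$. Then $s <_b A$ (the first disagreement is at $0\in A$), so the index of $s$ lies in $L_A$; the only one-point extension inside $\beta$ is $s\cup\{0\}=\{0,1,2\}$, and $A <_b \{0,1,2\}$ (first disagreement at $1$), so its index lies in $R_A$. Hence $s$ passes your test although $s \neq A\cap 3=\{0,2\}$. Your argument tacitly assumes that restoring the missing element $\delta$ would push $s\cup\{\delta\}$ back to the left of $A$, but that fails when $s$ also contains a later spurious element (here $1\notin A$). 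The error is not harmless: applying your bit-reading rule to this certified $s$ gives $s\cup\{3\}=\{1,2,3\} <_b A$ (again decided by the disagreement at $0$), i.e.\ $s\cup\{3\}\in L_A$, so your operator enumerates ``$3\in A$'' --- which is false. An $\alpha$-enumeration operator must output \emph{exactly} $A\oplus\overline{A}$, so a single false positive destroys the reduction; and since positive-information operators cannot demand unanimity over all certified candidates, the certification itself must change rather than the bit-reading. The paper's device is two-sided: a prefix is certified by exhibiting \emph{both} some $\beta_L \in L_A$ and some $\beta_R \in R_A$ whose sets agree up to the relevant bound; then $K_{\beta_L} \le_b A <_b K_{\beta_R}$ forces $A$ to share that common prefix (\cref{fact_properties_of_left_right_af_sets}ii). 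In your counterexample no member of $R_A$ has prefix $\{1,2\}$, so the sandwich correctly rejects it.

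A second, independent problem: in the direction $L_A \le_\alpha A$ you certify membership one element at a time. By \cref{defn_ae_reducibility}, $\le_{\alpha e}$ requires, for \emph{every} $\alpha$-finite $K_\gamma \subseteq L_A$, a single $\alpha$-finite subset of the oracle serving as certificate; element-wise clauses only establish the weak reducibility $\le_{w\alpha e}$, and the two notions genuinely differ for $\alpha>\omega$. Merging the per-element certificates into one $\alpha$-finite certificate means bounding all the first-difference witnesses and then using an $\alpha$-finite initial segment of $A$ (equivalently of $A\oplus\overline{A}$) that covers them --- which is exactly where quasiregularity is spent in the paper's proof of this direction as well. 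So your remark that this direction ``uses no regularity hypothesis whatsoever'' is itself the $\alpha$-theoretic subtlety being overlooked: it is true only for the weak reducibility, not for the reducibility the lemma asserts.
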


\begin{proof}
If $A \in \Delta_1(L_\alpha)$, then trivially $A \equiv_\alpha L_A \equiv_\alpha R_A$. Hence WLOG assume that $A \not\in L_\alpha$ and use \cref{rk_la_ra_def}. Also WLOG $A \not\in \Delta_1(L_\alpha)$ and so in the proof implicitly use the property:
$\forall x \in A \exists y, z [x < y < \alpha \land x < z < \alpha \land y \not\in A \land z \in A]$.

Note that $\bigcup_{x \in K_\gamma} K_x \in L_\alpha$. Hence for any $\gamma < \alpha$ we have:
$K_\gamma \subseteq L_A \iff \exists \beta < \alpha [K_\beta <_b A \land \forall x \in K_\gamma. K_x <_b K_\beta]$.
Thus $L_A \le_{\alpha e} A$ via $W := \{ \langle \gamma, \delta \rangle : \exists \beta < \alpha [K_\delta = \{\beta\} \land \forall x \in K_\gamma. K_x <_b K_\beta]\} \in \Sigma_1(L_\alpha)$. By symmetry $R_A \le_{\alpha e} A$.
Hence $L_A \oplus R_A \le_{\alpha e} A$.

Let $\widehat{A}$ denote $A$ or $\overline{A}$. Then $K_\gamma \subseteq \widehat{A} \iff \exists \beta_L, \beta_R < \alpha [ \forall x \in K_\gamma \forall y \le x [ y \in K_{\beta_L} \iff y \in K_{\beta_R}] \land K_\gamma \subseteq \widehat{K_{\beta_L}} \land \beta_L \in L_A \land \beta_R \in R_A]$ for any $\gamma < \alpha$ using the quasiregularity of $A$ and \cref{fact_properties_of_left_right_af_sets}ii.
Hence define $W := \{ \langle \gamma, \delta \rangle : \exists \beta_L, \beta_R < \alpha [ \forall x \in K_\gamma \forall y \le x [ y \in K_{\beta_L} \iff y \in K_{\beta_R}] \land K_\gamma \subseteq \widehat{K_{\beta_L}} \land K_\delta = \{\beta_L\} \oplus \{\beta_R\}] \}$.
Note that $W \in \Sigma_1(L_\alpha)$ and so $\widehat{A} \le_{\alpha e} L_A \oplus R_A$ via $W$.
Hence $A \oplus \overline{A} \le_{\alpha e} L_A \oplus R_A$.

Therefore $A \oplus \overline{A} \equiv_{\alpha e} L_A \oplus R_A = L_A \oplus \overline{L_A} = R_A \oplus \overline{R_A}$ and so $A \equiv_\alpha L_A \equiv_\alpha R_A$ as required.
\end{proof}

\begin{lemma}\label{lemma_jockusch_strongly_non-re_semicomputable}\footnote{Adapted from Lemma 5.5 in \cite{jockusch1968semirecursive} for $\alpha=\omega$}
$B \in \Sigma_1(L_\alpha) \land B >_\alpha 0 \implies$\\
$\exists A [A$ regular $\land A \equiv_\alpha B \land L_A \not\in \Pi_1(L_\alpha) \land L_A \not\in \Sigma_1(L_\alpha)]$.
\end{lemma}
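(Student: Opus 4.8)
The plan is to realize the required degree by a set $A$ built from a \emph{splitting} of $B$, coding one half positively and the other negatively, so that an enumeration of $L_A$ from either side would collapse the splitting and produce a forbidden reduction.

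\textbf{Step 1 (regularize and split).} First I would invoke \cref{thm_sacks_regular_set_existence} to replace $B$ by a regular $\alpha$-c.e.\ set of the same $\alpha$-degree, so WLOG $B$ is regular. Applying \cref{thm_shore_splitting} yields $\alpha$-c.e.\ $A_0, A_1$ with $B = A_0 \sqcup A_1$ and $B \not\le_\alpha A_i$; by \cref{lemma_sacks_lemma6} this gives $B \equiv_\alpha A_0 \oplus A_1$. The halves need not be regular, so I would regularize each separately via \cref{thm_sacks_regular_set_existence}, obtaining regular $\alpha$-c.e.\ sets $C_0 \equiv_\alpha A_0$ and $C_1 \equiv_\alpha A_1$. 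These retain exactly what the argument needs: $C_0 \oplus C_1 \equiv_\alpha A_0 \oplus A_1 \equiv_\alpha B$ and $B \not\le_\alpha C_i$, while now $C_0, C_1$ are regular (disjointness and the identity $\sqcup = B$ are no longer required).

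\textbf{Step 2 (the coding).} Using a fixed $\Delta_1(L_\alpha)$ splitting of $\alpha$ into two copies of itself, written $2m$ and $2m+1$, define $A$ by $2m \in A \iff m \in C_0$ and $2m+1 \in A \iff m \notin C_1$. Thus $A$ codes $C_0$ positively and $C_1$ negatively. Reading the bits off both ways gives $A \equiv_\alpha C_0 \oplus C_1 \equiv_\alpha B$, and the regularity of $C_0, C_1$ gives $A \cap \gamma \in L_\alpha$ for every $\gamma < \alpha$, so $A$ is regular. Since $A \equiv_\alpha B >_\alpha 0$ we have $A \notin \Delta_1(L_\alpha)$, so \cref{rk_la_ra_def} applies: $L_A, R_A$ are the $\alpha$-finite sets strictly left/right of $A$, and they are $\alpha$-semicomputable by \cref{lemma_la_ra_are_sc}.

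\textbf{Step 3 (the two reductions --- the crux).} Suppose $L_A \in \Sigma_1(L_\alpha)$; I claim $C_1 \le_\alpha C_0$. By transfinite recursion on $n$, assuming $C_1 \cap n$ already determined, the oracle $C_0$ together with $C_1 \cap n$ produces the initial segment $A \cap (2n+1)$ as an $\alpha$-finite object. Let $G := (A \cap (2n+1)) \cup \{2n+1\}$, i.e.\ that segment with bit $2n+1$ set to $1$ and all higher bits $0$. The single controlled differing bit yields the key equivalence $n \notin C_1 \iff G \le_b A \iff \mathrm{index}(G) \in L_A$, while dually $n \in C_1 \iff \mathrm{index}(G) \in R_A$. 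Since $C_1$ is $\alpha$-c.e.\ and $L_A$ is $\alpha$-c.e.\ by assumption, exactly one of the events ``$n$ enters $C_1$'' and ``$\mathrm{index}(G)$ enters $L_A$'' occurs, and waiting for whichever decides $C_1(n)$ from $C_0$. Hence $C_1 \le_\alpha C_0$, so $B \equiv_\alpha C_0 \oplus C_1 \le_\alpha C_0$, contradicting $B \not\le_\alpha C_0$. The wholly symmetric argument, applied to the bit $2n$ and the set $H := (A \cap 2n) \cup \{2n\}$, shows that $R_A \in \Sigma_1(L_\alpha)$ (equivalently $L_A \in \Pi_1(L_\alpha)$) forces $C_0 \le_\alpha C_1$ and hence $B \le_\alpha C_1$, again impossible. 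Therefore $L_A \notin \Sigma_1(L_\alpha)$ and $L_A \notin \Pi_1(L_\alpha)$, completing the proof.

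\textbf{Main obstacle.} The delicate point is Step 3: turning the two bit-by-bit decision procedures into genuine $\alpha$-reductions. Unlike the case $\alpha = \omega$, the recursion that recovers $A \cap (2n+1)$ must be justified at limit stages --- where $C_1 \cap \lambda \in L_\alpha$ is precisely the regularity secured in Step 1 --- and must be shown to be $\Sigma_1(L_\alpha, C_0)$-uniform, so that the whole characteristic function of $C_1$ is $\alpha$-computable in $C_0$. This is where regularity of the oracles and the $\Sigma_1$-admissibility of $L_\alpha$ carry the argument: the core bit-equivalence is routine from \cref{defn_alpha_rationals_order}, but packaging the stage-by-stage decisions into a single total reduction is the part that genuinely uses the $\alpha$-machinery.
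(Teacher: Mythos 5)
Your Steps 1 and 2 coincide with the paper's own proof: it likewise regularizes $B$ via \cref{thm_sacks_regular_set_existence}, splits it by \cref{thm_shore_splitting}, re-regularizes the two halves, and sets $A := C \oplus \overline{D}$, which is exactly your coding $2m \in A \iff m \in C_0$, $2m+1 \in A \iff m \notin C_1$; your key bit-equivalence ($n \notin C_1$ iff the index of $G$ lies in $L_A$) is also the mechanism underlying the paper's argument. The genuine gap is in Step 3, at precisely the point you flag and then wave through: the inference from ``each bit of $C_1$ is decided by waiting for one of two $\Sigma_1(L_\alpha)$ events'' to ``$C_1 \le_\alpha C_0$''. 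The justification you propose --- make the recursion $n \mapsto C_1 \cap n$ ``$\Sigma_1(L_\alpha, C_0)$-uniform'' so that the characteristic function of $C_1$ is $\alpha$-computable in $C_0$ --- is exactly the step that is not available here. A $\Sigma_1$ transfinite recursion over the expanded structure $\langle L_\alpha, C_0 \rangle$ needs, at each limit stage, the record of earlier search-termination stages to be $\alpha$-finite; that is replacement for $\Sigma_1(L_\alpha, C_0)$-definable functions, i.e.\ \emph{megaregularity} of $C_0$, not admissibility of $L_\alpha$, which does not relativize. A regular $\alpha$-c.e.\ set need not be megaregular (the paper's own hierarchy $\alpha$-computable $\implies$ megaregular $\implies$ regular is strict), and this lemma carries no megaregularity hypothesis --- it must hold for every $\alpha$-c.e.\ $B >_\alpha 0$. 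Positive megaregularity, which $\alpha$-c.e.\ sets do enjoy by \cref{cor_mr_deg_invariance}, does not help, since your procedure consults the oracle \emph{negatively} (it needs the bits $m \notin C_0$ to build $G$). So, as justified, ``hence $C_1 \le_\alpha C_0$'' does not follow.

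The repair --- and this is what the paper's proof actually does --- is to avoid oracle-relative recursion altogether and exhibit $\overline{D} \le_{\alpha e} C \oplus \overline{C}$ by a single $\Sigma_1(L_\alpha)$ enumeration operator. The crucial observation is that your per-bit certificates never need the oracle as a \emph{predicate}, only as \emph{data}: each certificate is a pure $\Sigma_1(L_\alpha)$ fact (a witness that $n \in C_1$, or that some index lies in $L_A$), and $C_0$ enters only through an $\alpha$-finite set $K_\delta$ deciding it on an initial segment, which exists by the regularity of $C_0$. The paper compresses all the bits into one such fact: $\langle \gamma, \delta \rangle$ is enumerated into $W$ iff there exists $\zeta \in L_A$ such that $K_\zeta$ agrees on even positions with the $C$-data recorded in $K_\delta$, contains $2x+1$ for every $x \in K_\gamma$, and has every missing odd position certified in $D$ (a $\Sigma_1$ fact since $D$ is $\alpha$-c.e.). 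These constraints together with $\zeta \in L_A$ force $K_\zeta$ to agree with $A$ on the whole initial segment, giving soundness, while completeness takes $K_\zeta$ to be the true initial segment of $A$, $\alpha$-finite by the regularity of $A$; all unbounded searching is over $L_\alpha$ itself and is tamed by the admissibility of $\alpha$ alone. If you insist on keeping per-bit certificates, you must gather them by $\Sigma_1$-collection over $L_\alpha$ (not over $\langle L_\alpha, C_0\rangle$), which succeeds for the same reason; but this packaging is exactly what your proposal leaves open, and it is the entire content of lifting Jockusch's lemma from $\omega$ to $\alpha$.
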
 

\begin{proof}
By \cref{thm_sacks_regular_set_existence} every $\Sigma_1(L_\alpha)$ set is $\alpha$-equivalent to some regular set, so WLOG assume that $B$ is regular.
By Shore's Splitting Theorem \ref{thm_shore_splitting}, $\exists C_0, D_0 \in \Sigma_1(L_\alpha)[ B = C_0 \sqcup D_0 \land C_0 |_\alpha D_0$ (incomparable wrt $\alpha$-reducibility) $]$.
Using \cref{thm_sacks_regular_set_existence} again, let $C, D$ be $\alpha$-c.e. regular sets st $C \equiv_\alpha C_0$ and $D \equiv_\alpha D_0$.
Define $A := C \oplus \overline{D}$.

Note $A = C \oplus \overline{D} \equiv_\alpha C_0 \oplus \overline{D_0}$.
Hence $A \equiv_\alpha B$ by \cref{lemma_sacks_lemma6} as required.

As $D$ is regular, so $\overline{D}$ is regular.
As $C$ and $\overline{D}$ are regular, so $A = C \oplus \overline{D}$ is regular as required.

Next we prove $L_A \not \in \Pi_1(L_\alpha) \land L_A \not\in \Sigma_1(L_\alpha)$.
For suppose to the contrary that $\neg(L_A \not\in \Pi_1(L_\alpha) \land L_A \not\in \Sigma_1(L_\alpha))$. Then $L_A \in \Sigma_1(L_\alpha) \lor L_A \in \Pi_1(L_\alpha)$.
\begin{itemize}
\item Case $L_A \in \Sigma_1(L_\alpha)$:\\
Note that $\overline{D} \le_{\alpha e} C \oplus \overline{C}$ via\\
$W := \{\langle \gamma, \delta \rangle : \beta = \mathrm{min}\{ \epsilon < \alpha : K_\gamma \cap \epsilon = K_\gamma\} \land \exists \zeta \in L_A \forall x < \beta [$\\
$(2x \in K_\delta \iff 2x+1 \not\in K_\delta \iff 2x \in K_\zeta) \land$\\
$(x \in K_\gamma \implies 2x+1 \in K_\zeta) \land$\\
$(2x+1 \not\in K_\zeta \implies x \in D)]\}$.
The set $W$ is $\alpha$-c.e. since $L_A$ and $D$ are $\alpha$-c.e.
The condition $2x \in K_\delta \iff 2x+1 \not\in K_\delta$ ensures that $K_\delta$ contains the initial segment $C \cap \beta$ of $C$.
The conditions $2x \in K_\delta \iff 2x \in K_\zeta$ and $2x+1 \not\in K_\zeta \implies x \in D$ ensure that $K_\zeta$ contains the initial segment $(C \cap \beta) \oplus (\overline{D} \cap \beta)$ of $C \oplus \overline{D}$.
Finally, the condition $x \in K_\gamma \implies 2x+1 \in K_\zeta$ verifies that $K_\gamma$ is a subset of $\overline{D}$, or more precisely a subset of its initial segment $\overline{D} \cap \beta$.

As $D$ is $\alpha$-c.e., so this gives us $D \le_\alpha C$ which is a contradiction to the case $L_A \in \Sigma_1(L_\alpha)$.

\item Case $L_A \in \Pi_1(L_\alpha)$:\\
Note that $R_A = \overline{L_A} \in \Sigma_1(L_\alpha)$.
Hence similarly $C \le_\alpha D$ using the fact that $R_A$ and $C$ are both $\alpha$-c.e. by applying a symmetric argument to the one above.
This is a contradiction to the case $L_A \in \Pi_1(L_\alpha)$.
\end{itemize}

So by the two cases $L_A \not\in \Pi_1(L_\alpha) \land L_A \not\in \Sigma_1(L_\alpha)$.

Therefore given $B >_\alpha 0$, there is a regular set $A$ st $A \equiv_\alpha B \land L_A \not\in \Pi_1(L_\alpha) \land L_A \not\in \Sigma_1(L_\alpha)$ as required.
\end{proof}

\begin{thm}\label{thm_jockusch_strongly_non-re_semicomputable}
Let $B \subseteq \alpha$ be quasiregular and $B >_\alpha 0$. Then there exists an $\alpha$-semicomputable set $A$ st
$A \equiv_\alpha B \land A \not\in \Sigma_1(L_\alpha) \land A \not\in \Pi_1(L_\alpha)$.
\end{thm}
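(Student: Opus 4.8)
The plan is to reduce the quasiregular case to the $\Sigma_1$ case already settled by \cref{lemma_jockusch_strongly_non-re_semicomputable}, using the left cut $L_B$ as the bridge. Because $B$ is quasiregular, \cref{lemma_sc_correspondence} gives $L_B \equiv_\alpha R_B \equiv_\alpha B$, and \cref{lemma_la_ra_are_sc} guarantees that $L_B$ and $R_B$ are $\alpha$-semicomputable regardless of any hypothesis. Thus $L_B$ is already an $\alpha$-semicomputable set of the correct $\alpha$-degree; the only way it can fail to be the witness we want is by landing in $\Sigma_1(L_\alpha)$ or $\Pi_1(L_\alpha)$. This dichotomy dictates a case split.

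First I would dispose of the easy case. If $L_B \notin \Sigma_1(L_\alpha)$ and $L_B \notin \Pi_1(L_\alpha)$, then $A := L_B$ already meets every requirement: it is $\alpha$-semicomputable by \cref{lemma_la_ra_are_sc}, it is $\alpha$-equivalent to $B$ by \cref{lemma_sc_correspondence}, and it lies outside both $\Sigma_1$ and $\Pi_1$ by assumption. Nothing further is needed.

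In the remaining case $L_B \in \Sigma_1(L_\alpha)$ or $L_B \in \Pi_1(L_\alpha)$. If $L_B \in \Sigma_1(L_\alpha)$ I set $B' := L_B$; otherwise $\overline{L_B} = R_B \in \Sigma_1(L_\alpha)$ and I set $B' := R_B$. In either subcase $B'$ is an $\alpha$-c.e. set with $B' \equiv_\alpha B$, and since $B >_\alpha 0$ we also have $B' >_\alpha 0$. I then feed $B'$ into \cref{lemma_jockusch_strongly_non-re_semicomputable}, obtaining a regular set $A'$ with $A' \equiv_\alpha B' \equiv_\alpha B$ and $L_{A'} \notin \Sigma_1(L_\alpha)$, $L_{A'} \notin \Pi_1(L_\alpha)$. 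The claim is that $A := L_{A'}$ is the desired set: it is $\alpha$-semicomputable by \cref{lemma_la_ra_are_sc}; being regular, $A'$ is in particular quasiregular, so \cref{lemma_sc_correspondence} yields $A = L_{A'} \equiv_\alpha A' \equiv_\alpha B$; and $A = L_{A'}$ is neither $\Sigma_1$ nor $\Pi_1$ by the choice of $A'$.

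Essentially all the conceptual content is already carried by \cref{lemma_jockusch_strongly_non-re_semicomputable}, which absorbs Sacks's regular set theorem and Shore's splitting theorem, so I do not expect a genuinely new obstacle here. The one point requiring care is the reduction from the mere quasiregularity hypothesis on $B$ to a $\Sigma_1$ representative of the same $\alpha$-degree that the previous lemma can consume. This is exactly what the cut sets supply: \cref{lemma_sc_correspondence} needs only quasiregularity of $B$ (not regularity or $\Sigma_1$-definability), so passing through $L_B$ is legitimate, and the case split ensures that whenever $L_B$ itself is not already a witness, one of $L_B, R_B$ is $\alpha$-c.e. and of the same positive $\alpha$-degree, hence eligible for \cref{lemma_jockusch_strongly_non-re_semicomputable}.
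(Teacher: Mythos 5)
Your proposal is correct and takes essentially the same route as the paper's proof: both arguments split into an ``easy'' case where $A := L_B$ itself is already the witness and a ``c.e.'' case where a $\Sigma_1(L_\alpha)$ representative of $\adeg{B}$ is fed into \cref{lemma_jockusch_strongly_non-re_semicomputable}, after which \cref{lemma_sc_correspondence} (via regularity, hence quasiregularity, of the resulting set) and \cref{lemma_la_ra_are_sc} convert its left cut into the desired $\alpha$-semicomputable set of the right degree. The only cosmetic difference is that you phrase the dichotomy as $L_B \in \Sigma_1(L_\alpha) \cup \Pi_1(L_\alpha)$ versus not, using $L_B$ or $R_B$ as the explicit $\alpha$-c.e. representative, whereas the paper splits on whether $\adeg{B}$ is an $\alpha$-c.e. degree and picks a representative by WLOG.
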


\begin{proof}
If $\adeg{B}$ is $\alpha$-c.e. degree, then WLOG let $B \in \Sigma_1(L_\alpha)$. Then by \cref{lemma_jockusch_strongly_non-re_semicomputable} there is $C$ st $C$ is quasiregular, $B \equiv_\alpha C \land L_C \not\in \Sigma_1(L_\alpha) \land L_C \not\in \Pi_1(L_\alpha)$.
By \cref{lemma_sc_correspondence} and quasiregularity of $C$ we have that $C \equiv_\alpha L_C$ and so $B \equiv_\alpha L_C$.
Hence $A := L_C$ is the required $\alpha$-semicomputable set by \cref{lemma_la_ra_are_sc}.

Otherwise $\adeg{B}$ is not an $\alpha$-c.e. degree and so $\forall C \in \adeg{B} [C \not\in \Sigma_1(L_\alpha) \land C \not\in \Pi_1(L_\alpha)]$. Note that $A := L_B \equiv_\alpha B$ by the quasiregularity of $B$ and by \cref{lemma_sc_correspondence} and so $A \not\in \Sigma_1(L_\alpha) \land A \not\in \Pi_1(L_\alpha)$. Finally, $A$ is $\alpha$-semicomputable by \cref{lemma_la_ra_are_sc} as required.
\end{proof}

%%%%%%%%%%%%%%%%%%%%%%%%%%%%%%%
\section{Kalimullin pair}%%%%%%%%%%%%%%%%%%%
%%%%%%%%%%%%%%%%%%%%%%%%%%%%%%%
The goal of this section is to generalize the results of Kalimullin \cite{sh2003definability} on the definability of a Kalimullin pair to a level $\alpha$.

\subsection{Introduction and basic properties}
\begin{defn}Sets $A, B \subseteq \alpha$ are a \emph{$\alpha$-$U$-Kalimullin pair} denoted by $\ukpair{U}{A}{B}$ iff $\exists W \le_{\alpha e} U [A \times B \subseteq W \land \overline{A} \times \overline{B} \subseteq \overline{W} ]$. If clear, we omit the prefix $\alpha$ and say $U$-Kalimullin pair (or just $U$-$\mathcal{K}$-pair) and denote it by $\ukpair{U}{A}{B}$.
Similarly, if $U \in \Sigma_1(L_\alpha)$, then we say that $A,B$ are a Kalimullin pair (or just $\mathcal{K}$-pair) and denote $\kpair{A}{B}$.

The set $W$ is called a \emph{witness} to the U-Kalimullin pair.
\end{defn}

\begin{prop}\label{prop_trivial_u_k_pair}\footnote{Proposition 2.2 in \cite{sh2003definability} for $\alpha=\omega$.}
If $A \le_{\alpha e} U$, then $\forall B \subseteq \alpha. \ukpair{U}{A}{B}$.
\end{prop}

\begin{proof}
Take the witness $W := A \times \alpha$.
\end{proof}

\begin{prop}\label{prop_sc_pair_kalimullin}
If $A$ is $\alpha$-semicomputable, then $\kpair{A}{\overline{A}}$.
\end{prop}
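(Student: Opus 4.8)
The plan is to read off the witness $W$ directly from a selector function for $A$, exploiting that the single defining property of a selector already encodes exactly the orientation demanded by a Kalimullin pair. Here $B=\overline{A}$, so $\overline{B}=A$, and the two conditions to establish are $A\times\overline{A}\subseteq W$ and $\overline{A}\times A\subseteq\overline{W}$.

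Concretely, I would fix a total $\alpha$-computable selector $s_A:\alpha\times\alpha\to\alpha$ witnessing $A\in\asc$ and set $W:=\{\langle x,y\rangle : s_A(x,y)=x\}$. Since $s_A$ is total and single-valued, its graph is $\Delta_1(L_\alpha)$, so $W$ is $\alpha$-computable and in particular $W\in\Sigma_1(L_\alpha)$; taking $U:=W$ then gives an $\alpha$-c.e.\ $U$ with $W\le_{\alpha e}U$ by reflexivity, which is what $\kpair{A}{\overline{A}}$ requires of the witness.

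It then remains to check the two inclusions, each a one-line application of the selector property. For $A\times\overline{A}\subseteq W$: if $x\in A$ and $y\notin A$, then $\{x,y\}\cap A\neq\emptyset$ forces $s_A(x,y)\in A$, and since $s_A(x,y)\in\{x,y\}$ with $y\notin A$ we get $s_A(x,y)=x$, i.e. $\langle x,y\rangle\in W$. For $\overline{A}\times A\subseteq\overline{W}$: if $x\notin A$ and $y\in A$, then again $s_A(x,y)\in A$, and $x\notin A$ forces $s_A(x,y)\neq x$, i.e. $\langle x,y\rangle\notin W$.

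I expect no real obstacle: both inclusions collapse to the same implication, applied with the roles of $A$ and $\overline{A}$ interchanged. The only points needing a moment of care are verifying that $W$ is genuinely $\Delta_1(L_\alpha)$ (so that it is a legitimate $\Sigma_1$ witness), which follows from totality and single-valuedness of $s_A$, and observing that the diagonal $x=y$ never lies in either product set, so no separate case is required.
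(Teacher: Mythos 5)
Your proposal is correct and takes essentially the same approach as the paper: the paper's proof consists precisely of defining the witness $W := \{ (x,y) : s_A(x,y)=x\} \in \Sigma_1(L_\alpha)$ from the selector function, exactly as you do. The inclusion checks and the $\Delta_1$-definability remark you include are just the verifications the paper leaves implicit.
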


\begin{proof}
Define the witness $W \in \Sigma_1(L_\alpha)$ to the Kalimullin pair $\kpair{A}{\overline{A}}$ to be
$W := \{ (x,y) \in \alpha : s_A(x,y)=x\}$ where $s_A$ is an $\alpha$-computable selector function for an $\alpha$-semicomputable set $A$.
\end{proof}

\begin{defn}$A, B \subseteq \alpha$ are a \emph{trivial} Kalimullin pair iff $\kpair{A}{B}$ and $A \in \Sigma_1(L_\alpha) \lor B \in \Sigma_1(L_\alpha)$. 
If $A, B$ are a not a trivial Kalimullin pair, they form a \emph{nontrivial} Kalimullin pair, denoted by $\kpairnt{A}{B}$.
\end{defn}

\begin{defn}(Maximal Kalimullin pair)\\
A Kalimullin pair $\kpair{A}{B}$ is \emph{maximal} denoted by $\kpairmax{A}{B}$ iff\\
$\forall C, D [A \le_{\alpha e} C \land B \le_{\alpha e} D \land \kpair{C}{D} \implies A \equiv_{\alpha e} C \land B \equiv_{\alpha e} D]$.
\end{defn}

\begin{rk}
Note that in the definition of a maximal Kalimullin pair we use $\alpha$-enumeration reducibility instead of a weak $\alpha$-enumeration reducibility since we want that a maximal Kalimullin pair is definable (given that a Kalimullin pair is definable) in the structure $\langle \mathcal{D}_{\alpha e}, \le \rangle$ where $\le$ is induced by $\le_{\alpha e}$.
\end{rk}

%done2, 26.3.2015
\begin{prop}\label{prop_ch2_cai1.8}
\footnote{From \cite{sh2003definability} and proposition 1.8 in \cite{caidefining}.}
Assume $A , B \subseteq \alpha \land A \not\in \Sigma_1(L_\alpha) \land B \not\in \Sigma_1(L_\alpha) \land \kpair{A}{B}$ where the witness of $\kpair{A}{B}$ is $W$. Then:

i) $A = \{a:\exists b[ b \not\in B \land (a,b) \in W]\}$.

ii) $B = \{b :\exists a[ a \not\in A \land (a,b) \in W]\}$.
\end{prop}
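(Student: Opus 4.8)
The plan is to prove only part i), since part ii) follows by the symmetry of the Kalimullin pair condition: replacing $W$ by its transpose $W^{T} := \{(b,a) : (a,b) \in W\}$ turns $A \times B \subseteq W$ into $B \times A \subseteq W^{T}$ and $\overline{A} \times \overline{B} \subseteq \overline{W}$ into $\overline{B} \times \overline{A} \subseteq \overline{W^{T}}$, so i) applied to the pair $(B,A)$ with witness $W^{T}$ yields ii). Write $A' := \{a : \exists b [b \not\in B \land (a,b) \in W]\}$; the goal is to show $A = A'$. Two facts I would extract at the outset: first, since $W \le_{\alpha e} U$ with $U \in \Sigma_1(L_\alpha)$, the witness $W$ is itself $\alpha$-c.e., because an $\alpha$-enumeration operator applied to an $\alpha$-c.e. set yields an $\alpha$-c.e. set (in the notation of \cref{defn_ae_reducibility}, both $\langle \gamma, \delta \rangle \in W$ and $K_\delta \subseteq U$ are $\Sigma_1$ conditions, and boundedness of $K_\delta$ keeps the combination $\Sigma_1$); second, from $\kpair{A}{B}$ the witness satisfies $A \times B \subseteq W$ and $\overline{A} \times \overline{B} \subseteq \overline{W}$.

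For the inclusion $A' \subseteq A$ (the easy direction), I would argue by contradiction. Suppose $a \in A'$, witnessed by some $b \not\in B$ with $(a,b) \in W$, but $a \not\in A$. Then $(a,b) \in \overline{A} \times \overline{B} \subseteq \overline{W}$, contradicting $(a,b) \in W$. Hence $a \in A$, which is all that is needed here.

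The reverse inclusion $A \subseteq A'$ is the crux, and this is where I expect the only real work. Fix $a \in A$ and consider the section $W_a := \{b : (a,b) \in W\}$. Since $a \in A$ and $A \times B \subseteq W$, we get $B \subseteq W_a$. Now suppose for contradiction that $a \not\in A'$, i.e. that no $b \not\in B$ satisfies $(a,b) \in W$; this says exactly $W_a \subseteq B$, and together with the previous inclusion it forces $W_a = B$. But $W_a$ is a section of the $\alpha$-c.e. set $W$ at the fixed parameter $a < \alpha$, hence $\alpha$-c.e., so $B \in \Sigma_1(L_\alpha)$ --- contradicting the hypothesis $B \not\in \Sigma_1(L_\alpha)$. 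Therefore some $b \not\in B$ does satisfy $(a,b) \in W$, i.e. $a \in A'$.

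The main obstacle is precisely recognizing that the failure of $A \subseteq A'$ collapses the section $W_a$ onto $B$ and thereby makes $B$ $\alpha$-c.e.; the hypothesis $B \not\in \Sigma_1(L_\alpha)$ is exactly what closes this off, and the hypothesis $A \not\in \Sigma_1(L_\alpha)$ plays the mirror role in part ii). Everything else is bookkeeping with the two defining inclusions of the Kalimullin pair.
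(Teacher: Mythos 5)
Your proof is correct and takes essentially the same route as the paper: the easy inclusion follows from $\overline{A} \times \overline{B} \subseteq \overline{W}$, and the hard inclusion by observing that if some $a \in A$ had no witness $b \not\in B$, then $B$ would coincide with a set enumerable from the $\alpha$-c.e.\ witness $W$, contradicting $B \not\in \Sigma_1(L_\alpha)$. If anything, your formulation via the section $W_a = \{b : (a,b) \in W\}$ at the fixed $a$ is slightly cleaner than the paper's corresponding step, which writes $B = \{b : \exists a\, (a,b) \in W\}$ with the existential ranging over all of $\alpha$ rather than pinned to the fixed $a$.
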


\begin{proof}
i):\\
0. Assume $A , B \subseteq \alpha \land A \not\in \Sigma_1(L_\alpha) \land B \not\in \Sigma_1(L_\alpha) \land \kpair{A}{B}$.\\
1. Define $A_2 := \{a:\exists b [ b \not\in B \land (a,b) \in W]\}$.\\
2. Assume $a \in A$.\\
3. Assume $a \not\in A_2$.\\
4. $\forall b [\neg(b \not\in B \land (a,b) \in W)]$ by 3.\\
5. $\forall b [b \in B \lor (a,b) \not\in W]$ by 4.\\
6. $\forall b [(a,b) \in W \implies b \in B]$ by 5.\\
7. $A \times B \subseteq W$ by 0.\\
8. $B = \{b: \exists a \in \alpha. (a,b) \in W\}$ by 7.\\
9. $B \le_{\alpha e} W$ by 8.\\
10. $W \in \Sigma_1(L_\alpha)$ by 0.\\
11. $B \in \Sigma_1(L_\alpha)$ by 9, 10.\\
12. $\mathrm{false}$ by 0, 11.\\
13. $a \in A_2$ by 3, 12.\\
14. $A \subseteq A_2$ by 2, 13.\\
15. Assume $a \in A_2$.\\
16. $\exists b [b \not\in B \land (a,b) \in W]$ by 1, 15.\\
17. Assume $a \not\in A$.\\
18. $b \not \in B \land a \not\in A$ by 16, 17.\\
19. $\overline A \times \overline B \subseteq \overline W$ by 0.\\
20. $(a,b) \in \overline{W}$ by 18, 19.\\
21. $\mathrm{false}$ by 16, 20.\\
22. $a \in A$ by 17, 21.\\
23. $A_2 \subseteq A$ by 15, 22.\\
24. $A=A_2$ by 14, 23. QED of i).

The proof of ii) is symmetric.
\end{proof}

\begin{cor}\label{prop_ch2_cai1.8_cor}
Assume $A , B \subseteq \alpha \land A \not\in \Sigma_1(L_\alpha) \land B \not\in \Sigma_1(L_\alpha) \land \kpair{A}{B}$. Then:\\
i) $A \le_{w\alpha e} \overline{B}$ and $B \le_{w\alpha e} \overline{A}$,\\
ii) $A \le_{\alpha e} \overline{B}$ if $B^-$ is megaregular, $B \le_{\alpha e} \overline{A}$ if $A^-$ is megaregular.
\end{cor}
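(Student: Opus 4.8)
The plan is to derive both statements from the explicit set-theoretic description of $A$ and $B$ supplied by Proposition \ref{prop_ch2_cai1.8}, which under the standing hypotheses gives $A = \{a : \exists b [b \not\in B \land (a,b) \in W]\}$ with $W \in \Sigma_1(L_\alpha)$ the witness to $\kpair{A}{B}$ (and symmetrically for $B$). The point is that this characterization already reads as an enumeration of $A$ from an enumeration of $\overline{B}$, so part i) should fall out essentially by unwinding definitions, and part ii) should follow by upgrading the weak reduction to a genuine $\alpha$-enumeration reduction using the megaregularity hypothesis and the correspondence in Proposition \ref{prop_correspondence_wae_ae}.

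For part i), I would argue as follows. Fix the $\Sigma_1(L_\alpha)$ witness $W$ from Proposition \ref{prop_ch2_cai1.8} and treat $\overline{B}$ as the oracle. Since $a \in A \iff \exists b [b \in \overline{B} \land (a,b) \in W]$ and $W$ is $\alpha$-c.e., the right-hand side is a $\Sigma_1(L_\alpha, \overline{B}^+)$ condition: it asks for the existence of a witness $b$ in the enumeration of $\overline{B}$ together with an $\alpha$-c.e. fact $(a,b) \in W$. Thus $A \in \Sigma_1(L_\alpha, \overline{B}^+)$. To convert this into $A \le_{w\alpha e} \overline{B}$, I would invoke the ``always'' edge of Proposition \ref{prop_correspondence_wae_ae}, which states that $A \in \Sigma_1(L_\alpha, \overline{B}^+)$ implies $A \le_{w\alpha e} \overline{B}$ with no regularity assumption. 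The claim $B \le_{w\alpha e} \overline{A}$ is obtained symmetrically from part ii) of Proposition \ref{prop_ch2_cai1.8}.

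For part ii), I would start from $A \le_{w\alpha e} \overline{B}$ established in part i) and strengthen it to $A \le_{\alpha e} \overline{B}$. Here the relevant edge of Proposition \ref{prop_correspondence_wae_ae} is the one labelled ``if $B^+$ megaregular'', which promotes a weak $\alpha$-enumeration reduction to a full $\alpha$-enumeration reduction. The hypothesis of part ii) is that $B^-$ is megaregular, i.e. the enumeration of $\overline{B}$ is megaregular; since the oracle in the reduction $A \le_{w\alpha e} \overline{B}$ is the set $\overline{B}$, the relevant positive-enumeration megaregularity is exactly that of $\overline{B}$, namely $(\overline{B})^+ = B^-$ being megaregular. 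Applying the megaregularity edge therefore yields $A \le_{\alpha e} \overline{B}$, and the symmetric statement $B \le_{\alpha e} \overline{A}$ follows from the megaregularity of $A^-$ by the same reasoning.

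The step I expect to require the most care is matching the megaregularity bookkeeping in part ii): one must verify that the oracle genuinely being enumerated is $\overline{B}$ and that its ``positive'' megaregularity $(\overline{B})^+$ coincides with the hypothesis $B^-$, so that the megaregularity edge of Proposition \ref{prop_correspondence_wae_ae} applies to the correct side. Everything else is a direct substitution into the characterizations of Proposition \ref{prop_ch2_cai1.8} followed by the appropriate edge of the $\Sigma_1$-definability/reducibility correspondence, so no nontrivial construction beyond those already-proved results should be needed.
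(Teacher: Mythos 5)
Your overall route is exactly the paper's intended one: the paper's own proof of this corollary is a one-liner (``Follows from \cref{prop_ch2_cai1.8}''), and you correctly unwind the characterizations $A = \{a : \exists b\,[b \not\in B \land (a,b) \in W]\}$ and its symmetric twin into enumeration reductions from $\overline{B}$ and $\overline{A}$. Your treatment of part ii) is also correct, including the bookkeeping that the oracle being enumerated is $\overline{B}$ and that $(\overline{B})^+ = B^-$, so the edge of \cref{prop_correspondence_wae_ae} labelled ``if $B^+$ megaregular'' applies with $\overline{B}$ in the role of the oracle.

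There is, however, one genuinely wrong step in part i): you invoke an ``always'' edge of \cref{prop_correspondence_wae_ae} from $A \in \Sigma_1(L_\alpha, \overline{B}^+)$ to $A \le_{w\alpha e} \overline{B}$, but no such edge exists. In the paper's diagram, the implication from $\Sigma_1$-definability to weak $\alpha$-enumeration reducibility is labelled ``if $B$ regular''; the ``always'' arrow between those two nodes points in the opposite direction. Nothing in the corollary's hypotheses makes $\overline{B}$ (or $\overline{A}$) regular, so the step as stated is unjustified, and the unconditional implication you appeal to is precisely what fails for non-regular oracles (which is why the paper attaches the proviso). Fortunately the gap is repairable without any regularity, because the $\Sigma_1$ definition you extracted has singleton positive use: $a \in A \iff \exists b\,[(a,b) \in W \land \{b\} \subseteq \overline{B}]$. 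Define $\Phi := \{\langle a, \delta\rangle : \exists b\,[(a,b) \in W \land K_\delta = \{b\}]\}$. Then $\Phi \in \Sigma_1(L_\alpha)$ since $W$ is, and $\Phi(\overline{B}) = A$ directly from the definition of a weak $\alpha$-enumeration operator, giving $A \le_{w\alpha e} \overline{B}$; the symmetric construction gives $B \le_{w\alpha e} \overline{A}$. With that substitution your argument is complete and matches the paper's.
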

\begin{proof}
Follows from \cref{prop_ch2_cai1.8}.
\end{proof}

%done1
%tempting: generalize this for joins and products of an arbitrary even transfinite size.
\begin{lemma}\label{lemma_k_pair_distributivity}(Kalimullin pair distributivity)\\
Suppose that $\bigwedge_{i \in 2} A_i \not=\emptyset$. Then

$\bigwedge_{i \in 2} \kpair{A_i}{B} \iff \kpair{\bigoplus_{i \in 2} A_i}{B} \iff \kpair{\prod_{i \in 2} A_i}{B}$
\end{lemma}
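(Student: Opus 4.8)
The plan is to establish the two equivalences $\bigwedge_{i \in 2} \kpair{A_i}{B} \iff \kpair{A_0 \oplus A_1}{B}$ and $\bigwedge_{i \in 2} \kpair{A_i}{B} \iff \kpair{A_0 \times A_1}{B}$ separately; chaining them yields the full statement. In every direction I would argue directly on witnesses, using that $\kpair{X}{B}$ holds iff there is a witness $W \in \Sigma_1(L_\alpha)$ with $X \times B \subseteq W$ and $\overline{X} \times \overline{B} \subseteq \overline{W}$ (if $U \in \Sigma_1(L_\alpha)$ and $W \le_{\alpha e} U$ then $W \in \Sigma_1(L_\alpha)$, and conversely one may take $U = W$). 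Since all the coordinate transformations below are $\alpha$-computable (via the pairing of \cref{prop_bij_to_n_fold_product}), a $\Sigma_1(L_\alpha)$ witness is always carried to a $\Sigma_1(L_\alpha)$ witness, so in each case I only need to check the two defining inclusions.

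For the join, given witnesses $W_0, W_1$ for $\kpair{A_0}{B}$ and $\kpair{A_1}{B}$, I would set $W := \{(2a, b) : (a,b) \in W_0\} \cup \{(2a+1, b) : (a,b) \in W_1\}$ and verify both inclusions using that an even first coordinate $2a$ lies in $A_0 \oplus A_1$ exactly when $a \in A_0$, and an odd one $2a+1$ exactly when $a \in A_1$; the parity prevents the two halves from interfering. Conversely, from a witness $W$ for $\kpair{A_0 \oplus A_1}{B}$ I would recover $W_i := \{(a,b) : (2a+i, b) \in W\}$ for $i \in 2$. Neither direction needs the nonemptiness hypothesis, and the inclusion checks are routine once the parity bookkeeping is laid out.

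For the product I view $A_0 \times A_1 = \{\langle a_0, a_1\rangle : a_0 \in A_0 \land a_1 \in A_1\} \subseteq \alpha$ under the pairing of \cref{prop_bij_to_n_fold_product}. Forward, I would take $W := \{(\langle a_0, a_1\rangle, b) : (a_0, b) \in W_0 \land (a_1, b) \in W_1\}$; the key observation is that $\overline{A_0 \times A_1}$ is governed by the \emph{disjunction} $a_0 \notin A_0 \lor a_1 \notin A_1$, so if $\langle a_0, a_1\rangle \in \overline{A_0 \times A_1}$ and $b \in \overline{B}$, then at least one of $(a_0, b) \notin W_0$ or $(a_1, b) \notin W_1$ holds, whence $(\langle a_0, a_1\rangle, b) \notin W$. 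This forward direction, like the one for the join, requires no nonemptiness.

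The crux is the converse $\kpair{A_0 \times A_1}{B} \Rightarrow \bigwedge_{i \in 2} \kpair{A_i}{B}$, and this is exactly where $A_0, A_1 \neq \emptyset$ is used. Fixing parameters $a_0^* \in A_0$ and $a_1^* \in A_1$, from a witness $W$ for $\kpair{A_0 \times A_1}{B}$ I would define $W_0 := \{(a_0, b) : (\langle a_0, a_1^*\rangle, b) \in W\}$ and symmetrically $W_1 := \{(a_1, b) : (\langle a_0^*, a_1\rangle, b) \in W\}$; each $a_i^*$ is a single ordinal below $\alpha$, hence a legitimate $\Sigma_1(L_\alpha)$ parameter, so $W_0, W_1 \in \Sigma_1(L_\alpha)$. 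Then $a_0 \in A_0 \land b \in B$ gives $\langle a_0, a_1^*\rangle \in A_0 \times A_1$ and so $(a_0,b) \in W_0$, while $a_0 \notin A_0 \land b \notin B$ gives $\langle a_0, a_1^*\rangle \in \overline{A_0 \times A_1}$ and so $(a_0, b) \notin W_0$; symmetrically for $W_1$. I expect this parameter-fixing step to be the only subtle point. Finally I would remark that the hypothesis is genuinely necessary: if say $A_1 = \emptyset$ then $A_0 \times A_1 = \emptyset$ and $\kpair{\emptyset}{B}$ holds vacuously for every $B$, carrying no information about $\kpair{A_0}{B}$.
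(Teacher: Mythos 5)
Your proof is correct, and it is organized differently from the paper's. The paper does not prove the two biconditionals separately; it establishes a single \emph{cycle} of implications $\bigwedge_i \kpair{A_i}{B} \implies \kpair{\bigoplus_i A_i}{B} \implies \kpair{\prod_i A_i}{B} \implies \bigwedge_i \kpair{A_i}{B}$, which yields all the equivalences with only three witness constructions: your join witness $V$ and (essentially) your product witness appear there, but the paper builds the product witness from the join witness $V$ rather than from the original $U_i$, and it closes the cycle by \emph{projection}, taking $U^*_i := \{(a_i,b) : \exists (a_0,a_1).\ ((a_0,a_1),b) \in W\}$, i.e.\ existentially quantifying over the partner coordinate instead of freezing it at a fixed $a^*_{1-i} \in A_{1-i}$ as your slicing construction does; in both treatments nonemptiness is what supplies the partner element, so the hypothesis is used in exactly one place. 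Your route costs one extra construction (the parity-decoding witness $W_i := \{(a,b) : (2a+i,b)\in W\}$ for the converse of the join equivalence, which the paper gets for free by going around the cycle), but it buys a sharper statement: it makes explicit that the join equivalence holds with no nonemptiness assumption whatsoever, that the hypothesis is needed only to recover the individual pairs from the product, and (via your $A_1 = \emptyset$ remark, which is correct since $\kpair{\emptyset}{B}$ holds with witness $W = \emptyset$) that this need is genuine. Your preliminary observation that one may as well work with $\Sigma_1(L_\alpha)$ witnesses matches the paper's own usage, and is justified by \cref{lemma_join_ce_reduction} together with admissibility.
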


\begin{proof}
Suppose $\bigwedge_{i \in 2} \kpair{A_i}{B}$.
For any $i \in 2$ let
$A_i \times B \subseteq U_i \in \Sigma_1(L_\alpha)$ and $\overline A_i \times \overline B \subseteq \overline U_i$.\\
Define $V := \{(2a+i, b) : (a,b) \in U_i, i \in 2 \}$.\\
Define $W := \{((a_0, a_1),b) : \forall i \in 2. (2a_i+i,b) \in V\}$.\\
Define $U^*_i := \{(a_i,b) : \exists (a_0,a_1). ((a_0, a_1),b) \in W\}$.\\
Then $\bigwedge_{i \in 2} A_i \times B \subseteq U_i \in \Sigma_1(L_\alpha) \land \overline A_i \times \overline B \subseteq \overline U_i \implies$\\
$(\bigoplus_{i \in 2} A_i) \times B \subseteq V \in \Sigma_1(L_\alpha) \land \overline{\bigoplus_{i \in 2} A_i} \times \overline B \subseteq \overline V \implies$\\
$(\prod_{i \in 2} A_i) \times B \subseteq W \in \Sigma_1(L_\alpha) \land \overline{\prod_{i \in 2} A_i} \times \overline B \subseteq \overline W \implies$ (by $\bigwedge_{i \in 2} A_i \not=\emptyset$)\\
$\bigwedge_{i \in 2} A_i \times B \subseteq U^*_i \in \Sigma_1(L_\alpha) \land \overline A_i \times \overline B \subseteq \overline U^*_i$.
\end{proof}

%done2, 12.3.2015
\begin{lemma}\label{lemma_max_k_trivial}
$\kpairmax{A}{B} \implies \kpairnt{A}{B}$
\end{lemma}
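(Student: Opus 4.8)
The plan is to argue by contradiction: assume $\kpairmax{A}{B}$ holds but the pair is \emph{trivial}, so that (up to the evident symmetry of the Kalimullin pair relation, obtained by transposing the witness $W$ to $W' := \{(y,x) : (x,y) \in W\}$) we may suppose $A \in \Sigma_1(L_\alpha)$. The key observation I would exploit is that an $\alpha$-c.e.\ set forms a Kalimullin pair with \emph{every} subset of $\alpha$: since $A \in \Sigma_1(L_\alpha)$ and $A \le_{\alpha e} A$, \cref{prop_trivial_u_k_pair} applied with $U := A$ yields $\ukpair{A}{A}{D}$ for all $D \subseteq \alpha$, and because $A \in \Sigma_1(L_\alpha)$ this is by definition exactly $\kpair{A}{D}$. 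Thus the $A$-side of the pair places no constraint whatsoever on its partner.

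Next I would invoke the unboundedness of the $\alpha$-enumeration degrees. By \cref{cor_unboundedness_of_ae_degs} there is a set $D \subseteq \alpha$ with $B <_{\alpha e} D$; in particular $B \le_{\alpha e} D$. Setting $C := A$ and combining with the previous paragraph, the pair $C, D$ satisfies $A \le_{\alpha e} C$, $B \le_{\alpha e} D$ and $\kpair{C}{D}$, which are precisely the hypotheses in the definition of $\kpairmax{A}{B}$. Maximality then forces $B \equiv_{\alpha e} D$, directly contradicting $B <_{\alpha e} D$.

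This contradiction shows that no component of a maximal Kalimullin pair can be $\alpha$-c.e., which is exactly the statement $\kpairmax{A}{B} \implies \kpairnt{A}{B}$. I do not anticipate any substantive obstacle; the only points requiring care are the reduction to the case $A \in \Sigma_1(L_\alpha)$, which rests on the symmetry of $\mathcal{K}$-pairs under transposition of the witness, and the verification that \cref{prop_trivial_u_k_pair} applies with the $\alpha$-c.e.\ set itself in the role of $U$.
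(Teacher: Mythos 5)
Your proof is correct, but it follows a genuinely different and more economical route than the paper's. The paper derives a contradiction from $B \equiv_{\alpha e} 0$ by a case analysis on the degree of $A$: when also $A \equiv_{\alpha e} 0$ it picks $C >_{\alpha e} A$ by \cref{cor_unboundedness_of_ae_degs} and contradicts maximality using the semicomputable pair $L_C, R_C$ (\cref{prop_sc_pair_kalimullin}, \cref{lemma_sc_correspondence}); when $A >_{\alpha e} 0$ it manufactures $\overline{K_A} >_{\alpha e} A$ from the weak halting set and uses the observation that a $\Sigma_1(L_\alpha)$ set forms a $\mathcal{K}$-pair with every set. Your argument isolates that second mechanism and shows it suffices on its own: once WLOG $A \in \Sigma_1(L_\alpha)$, the pair $\kpair{A}{D}$ holds for \emph{every} $D$ via the witness $A \times \alpha$ (\cref{prop_trivial_u_k_pair} with $U := A$), so instead of producing a degree strictly above the c.e.\ side you blow up the partner side, taking $D >_{\alpha e} B$ by \cref{cor_unboundedness_of_ae_degs}, and maximality forces the contradiction $B \equiv_{\alpha e} D$. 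This removes the paper's case analysis, the splitting machinery, and the jump-theoretic claims in its steps 12--16 ($A \equiv_{\alpha e} K_A$, $\overline{K_A} \equiv_{\alpha e} K_A \oplus \overline{K_A}$, $\overline{K_A} >_{\alpha e} K_A$), which are asserted there without proof and would need genuine justification over an arbitrary admissible $\alpha$. The small price, which you correctly flag, is the WLOG reduction: you need both $\kpair{A}{B}$ and $\kpairmax{A}{B}$ to be symmetric under transposing the witness, the latter following from the former applied inside the universal quantifier of the maximality definition, together with reflexivity of $\le_{\alpha e}$; all routine. On balance your proof is shorter, relies on fewer unproved prerequisites, and is the one I would keep.
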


\begin{proof}(Of \cref{lemma_max_k_trivial})\\
1. Assume $\kpairmax{A}{B}$.\\
2. Assume $B \equiv_{\alpha e} 0$.\\
3. Assume $A \equiv_{\alpha e} 0$.\\
4. $\exists C \subseteq \alpha \land C >_{\alpha e} A$ by unboudedness of $\mathcal{D}_{\alpha e}$ \ref{cor_unboundedness_of_ae_degs}.\\
5. $L_C \oplus R_C \equiv_{\alpha e} C$ by definition \ref{defn_sc_segments}.\\
6. Assume WLOG $L_C >_{\alpha e} A$ by 4, 5.\\
7. $\kpairmax{L_C}{R_C}$ by $L_C$ $\alpha$-semicomputable and $R_C = \overline{L_A}$.\\
8. $L_C \ge_{\alpha e} A \land R_C \ge_{\alpha e} B$ by 2, 3.\\
9. $L_C \equiv_{\alpha e} A$ by 1, 8.\\
10. $\mathrm{false}$ by 6, 9.\\
11. $A >_{\alpha e} 0$ by 3, 10.\\
12. $K_A := \{x \in \alpha : \Phi^A_x(x) \downarrow\}$.\\
13. $\overline{K_A} \equiv_{\alpha e} K_A \oplus \overline{K}_A$ by 12.\\
14. $A \equiv_{\alpha e} K_A$ by 12.\\
15. $\overline{K_A} >_{\alpha e} K_A$ by 13, 14.\\
16. $\overline{K_A} >_{\alpha e} A$ by 14, 15.\\
17. $\kpair{\overline{K_A}}{B}$ by 2.\\
18. $\overline{K_A} \equiv_{\alpha e} A$ by 1, 17.\\
19. $\mathrm{false}$ by 16, 18.\\
20. $B >_{\alpha e} 0$ by 2, 19.\\
21. $\kpairnt{A}{B}$ by 11, 20.
\end{proof}

%%%%%%%%%%%%%%%%%%%%%%%%%%%%%%%%%%%%%%%
\subsection{Definability of an $\alpha$-Kalimullin pair}\label{section_k_pair_definability}%%%%%%%%%%%%%%
%%%%%%%%%%%%%%%%%%%%%%%%%%%%%%%%%%%%%%%

For this section let $D_x, E_x$ be a pair of $\alpha$-finite sets indexed by $x < \alpha$ according to \cref{lemma_numbering_of_pairs_of_alpha_fin_sets}.
For any $x < \alpha$ define
\begin{gather*}
V_x := \{y < \alpha : D_x \subseteq D_y \land E_x \subseteq E_y\}
\end{gather*}

\begin{lemma}\label{lemma_y_ae_from_x_join_a}
Assume that $x \in Y \iff x \in X \land D_x \subseteq A$ where $D_x$ is an $\alpha$-finite set with an $\alpha$-computable index $x$. Then $Y \le_{\alpha e} X \oplus A$.
\end{lemma}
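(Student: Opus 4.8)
The plan is to construct an $\alpha$-enumeration operator directly from the defining condition $x \in Y \iff x \in X \land D_x \subseteq A$. The key observation is that membership in $Y$ factors into two positive conditions on the oracle $X \oplus A$: the condition $x \in X$, which is read off the even part of the oracle, and the condition $D_x \subseteq A$, which asks that the $\alpha$-finite set $D_x$ (whose canonical index is $\alpha$-computably recoverable from $x$ by \cref{lemma_numbering_of_pairs_of_alpha_fin_sets}) be contained in the odd part of the oracle. Both are genuinely positive (enumeration-style) requests on $X \oplus A$, so no complementation is needed and an $\alpha$-enumeration reduction should exist.

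First I would recall the precise form required by \cref{defn_ae_reducibility}: to show $Y \le_{\alpha e} X \oplus A$ I must exhibit a set $W \in \Sigma_1(L_\alpha)$ such that for all $\gamma < \alpha$, $K_\gamma \subseteq Y \iff \exists \delta < \alpha[\langle \gamma, \delta \rangle \in W \land K_\delta \subseteq X \oplus A]$. So I need to handle not just single elements but arbitrary $\alpha$-finite sets $K_\gamma$ of candidate elements at once. Given $K_\gamma = \{x_0, x_1, \ldots\}$, the requirement $K_\gamma \subseteq Y$ unfolds to: every $x \in K_\gamma$ lies in $X$ and satisfies $D_x \subseteq A$. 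Using the even/odd coding of the join, $x \in X$ becomes $2x \in X \oplus A$, and $D_x \subseteq A$ becomes $\{2d+1 : d \in D_x\} \subseteq X \oplus A$. I would therefore define, for each $\gamma$, the single $\alpha$-finite witness set
\[
K_\delta := \{2x : x \in K_\gamma\} \cup \{2d+1 : x \in K_\gamma,\ d \in D_x\},
\]
which is $\alpha$-finite by \cref{prop_alpha_finite_union} (an $\alpha$-finite union of the $\alpha$-finite sets $D_x$) together with the $\alpha$-computability of the pair-indexing. Then I would set $W := \{\langle \gamma, \delta\rangle : \delta \text{ is the canonical index of this } K_\delta\}$, which is $\Sigma_1(L_\alpha)$ because the map $\gamma \mapsto \delta$ is $\alpha$-computable.

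The verification then splits into the two directions of the biconditional in \cref{defn_ae_reducibility}. For the forward direction, if $K_\gamma \subseteq Y$ then every $x \in K_\gamma$ satisfies $x \in X$ and $D_x \subseteq A$, so the constructed $K_\delta \subseteq X \oplus A$ and $\langle \gamma, \delta\rangle \in W$ witnesses the reduction. For the reverse direction, if $K_\delta \subseteq X \oplus A$ for the $\delta$ paired with $\gamma$, then decoding gives $2x \in X \oplus A$ hence $x \in X$ for each $x \in K_\gamma$, and $\{2d+1 : d \in D_x\} \subseteq X \oplus A$ hence $D_x \subseteq A$, so $x \in Y$ by the defining hypothesis and thus $K_\gamma \subseteq Y$.

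The main obstacle, such as it is, is bookkeeping rather than mathematics: I must make sure the witness set $K_\delta$ is genuinely $\alpha$-finite and that its canonical index is produced $\alpha$-computably and uniformly in $\gamma$, so that $W$ lands in $\Sigma_1(L_\alpha)$. This is exactly where \cref{lemma_numbering_of_pairs_of_alpha_fin_sets} (to recover $D_x$ from $x$), \cref{prop_alpha_finite_union} (to collect the $D_x$ for $x$ ranging over the $\alpha$-finite set $K_\gamma$ into a single $\alpha$-finite set), and \cref{lemma_alpha_computable_indexing_of_computed_sets} (to perform the even/odd coding and produce the index $\alpha$-computably) come together. I would conclude by remarking that the argument is entirely positive in the oracle, so the reduction is a full $\alpha$-enumeration reduction and not merely a weak one.
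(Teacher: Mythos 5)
Your proposal is correct and is essentially the paper's own proof: both reduce $K_\gamma \subseteq Y$ to $K_\gamma \subseteq X \land \bigcup_{x \in K_\gamma} D_x \subseteq A$, package the single witness $K_\delta = K_\gamma \oplus \bigcup_{x \in K_\gamma} D_x$ via the $\alpha$-computable union and join indexing functions of \cref{lemma_alpha_computable_indexing_of_computed_sets} (you write the even/odd coding out explicitly where the paper invokes the function $v$), and take $W$ to be the graph of the resulting $\alpha$-computable map $\gamma \mapsto \delta$, which is $\Sigma_1(L_\alpha)$.
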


\begin{proof}
Recall $Y \le_{\alpha e} X \oplus A \iff \exists W \in \Sigma_1(L_\alpha) \forall \gamma < \alpha [K_\gamma \subseteq Y \iff \exists \langle \gamma, \delta\rangle \in W. K_\delta \subseteq X \oplus A]$.
Note $K_\gamma \subseteq Y \iff \forall x \in K_\gamma. x \in Y \iff \forall x \in K_\gamma [x\in X \land D_x \subseteq A] \iff K_\gamma \subseteq X \land \bigcup_{x \in K_\gamma} D_x \subseteq A \iff$
(By \cref{lemma_alpha_computable_indexing_of_computed_sets}i)
$K_\gamma \subseteq X \land K_{u(\gamma)} \subseteq A \iff$
(By \cref{lemma_alpha_computable_indexing_of_computed_sets}ii)
$K_{v(\gamma, u(\gamma))} \subseteq X \oplus A$.
Hence define $W := \{\langle \gamma, \delta\rangle < \alpha : \delta=v(\gamma, u(\gamma)) \}$.
As $u,v \in \Sigma_1(L_\alpha)$, so $W \in \Sigma_1(L_\alpha)$.
Moreover, $K_\gamma \subseteq Y \iff \exists \langle \gamma, \delta \rangle \in W. K_\delta \subseteq X \oplus A$.
Therefore $Y \le_{\alpha e} X \oplus A$.
\end{proof}

\begin{lemma}\label{lemma2_thm_kalimullin_thm2.5}
Assume $M_s \in \Sigma_1(L_\alpha)$ and $X_s \in L_\alpha$.
Let $W := \{\langle a,b \rangle : \exists x \in M_s [a \in D_x \land b \in E_x \land x \in \aeop{e}((X_s \cup (M_s \cap V_x)) \oplus U)]\}$.
Assume $U^+$ is megaregular.
Then $W \le_{\alpha e} U$.
\end{lemma}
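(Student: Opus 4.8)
The plan is to show $W \le_{\alpha e} U$ by exhibiting an explicit $\Sigma_1(L_\alpha)$ reduction witness and invoking the megaregularity of $U^+$ to pass from weak reducibility to full $\alpha$-enumeration reducibility. First I would unwind the definition: a pair $\langle a,b\rangle$ lands in $W$ exactly when there is an index $x \in M_s$ with $a \in D_x$, $b \in E_x$, and $x$ enumerated into $\aeop{e}\bigl((X_s \cup (M_s \cap V_x)) \oplus U\bigr)$. The key observation is that $X_s \in L_\alpha$ is $\alpha$-finite and $M_s \in \Sigma_1(L_\alpha)$ is $\alpha$-c.e., so the only genuinely oracle-dependent part of the enumeration argument $(X_s \cup (M_s \cap V_x)) \oplus U$ is the $U$-component; by \cref{lemma_join_ce_reduction} the $\alpha$-c.e. pieces contribute nothing beyond what $U$ already gives.

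\textbf{Reducing to an enumeration from $U$.}
Next I would apply the witness property (\cref{prop_enum_op_witness}): if $x \in \aeop{e}\bigl((X_s \cup (M_s \cap V_x)) \oplus U\bigr)$, then the membership is already witnessed by some $\alpha$-finite subset $K \in L_\alpha$ of the oracle, i.e. $x \in \aeop{e}(K)$ with $K \subseteq (X_s \cup (M_s \cap V_x)) \oplus U$. Since $V_x$ is $\alpha$-computable in $x$ and $M_s$ is $\alpha$-c.e., the set $M_s \cap V_x$ is itself $\alpha$-c.e. uniformly in $x$, and its intersection with the $\alpha$-finite witnessing set $K$ is captured by a bounded, $\alpha$-finitely verifiable condition. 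So membership of $\langle a,b\rangle$ in $W$ can be rewritten as: there exist an index $x$ and an $\alpha$-finite $K_\delta \subseteq U$ such that $a \in D_x$, $b \in E_x$, and the purely $\Sigma_1$ side-conditions (that $x \in M_s$, that the $X_s$-part and $M_s\cap V_x$-part needed by the enumeration lie in their respective $\alpha$-c.e. sets, and that $K_\delta$ supplies the $U$-part) all hold. I would then package this as a weak $\alpha$-enumeration operator $\Phi$ with $W = \Phi(U)$, establishing $W \le_{w\alpha e} U$.

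\textbf{From weak to full reducibility.}
Finally, to upgrade $W \le_{w\alpha e} U$ to $W \le_{\alpha e} U$, I would invoke \cref{prop_correspondence_wae_ae}: since $U^+$ is assumed megaregular, weak $\alpha$-enumeration reducibility to $U$ implies full $\alpha$-enumeration reducibility to $U$. This is exactly where the megaregularity hypothesis on $U^+$ is used and cannot be dropped. Concretely, megaregularity guarantees that the $\Sigma_1$-collection needed to bundle the unboundedly-many single-element enumerations into a single $\alpha$-finite reduct goes through, so the witness $W$ can be recast in the form required by \cref{defn_ae_reducibility}.

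\textbf{The main obstacle.}
The hard part will be the bookkeeping in the middle step, specifically verifying that the oracle $(X_s \cup (M_s \cap V_x)) \oplus U$ interacts with the enumeration operator $\aeop{e}$ in a way that isolates a \emph{genuinely $\alpha$-finite} $U$-dependence uniform in $x$. The dependence of the oracle on $x$ through $M_s \cap V_x$ is what complicates matters, since $x$ ranges over the $\alpha$-c.e. set $M_s$ rather than a fixed bound; one must be careful that the witnessing $\alpha$-finite set $K$ delivered by \cref{prop_enum_op_witness} can be incorporated into a single $\Sigma_1$ predicate without smuggling in more of $U$ than a bounded positive enumeration allows. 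Once this localization is established, the appeal to megaregularity via \cref{prop_correspondence_wae_ae} closes the argument cleanly.
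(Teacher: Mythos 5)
Your proposal is correct, and it takes a genuinely more streamlined route than the paper. The paper works through the intermediate set $S_e := \aeop{e}((X_s \cup (M_s \cap V_x)) \oplus U)$: it first proves $W \le_{w\alpha e} S_e$ by an explicit collection argument (bundling, for each $\alpha$-finite $K_\gamma \subseteq W$, the witnessing indices $x \in M_s \cap S_e$ into a single $\alpha$-finite set, with the uniformities handled by \cref{lemma_alpha_computable_indexing_of_computed_sets}), then proves $S_e \le_{w\alpha e} (X_s \cup (M_s \cap V_x)) \oplus U \le_{\alpha e} U$ by \cref{lemma_enum_from_aeop} and \cref{lemma_join_ce_reduction}, and finally chains these via \cref{fact_wae_ae_implies_wae}, invoking the megaregularity of $U^+$ \emph{twice} --- once to upgrade $S_e \le_{w\alpha e} U$ to $S_e \le_{\alpha e} U$, and once more to upgrade $W \le_{w\alpha e} U$ at the end. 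You bypass $S_e$ entirely: by the witness property (\cref{prop_enum_op_witness}) together with monotonicity (\cref{prop_enum_op_monotonicity}, which is what the converse inclusion $\Phi(U) \subseteq W$ needs), an $\alpha$-finite oracle witness for $x \in \aeop{e}((X_s \cup (M_s \cap V_x)) \oplus U)$ splits into a part $K_\gamma \subseteq X_s \cup (M_s \cap V_x)$ --- a $\Sigma_1(L_\alpha)$ condition in $\gamma$ and $x$, since bounded universal quantification over a $\Sigma_1$ matrix stays $\Sigma_1$ by admissibility, which is the precise content behind your ``bounded, $\alpha$-finitely verifiable'' remark --- and a part $K_\delta \subseteq U$, which becomes the oracle axiom of a single weak $\alpha$-enumeration operator $\Phi$ with $W = \Phi(U)$; megaregularity is then used exactly once, via \cref{prop_correspondence_wae_ae}. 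Your route buys a cleaner quarantine of the oracle: all $U$-dependent collection sits inside the single appeal to \cref{prop_correspondence_wae_ae}, whereas the paper's inline collection step (justified there only ``by the admissibility of $\alpha$'', even though the witnesses are drawn from the $U$-dependent set $S_e$) tacitly relies on the megaregularity of $U^+$ as well. What the paper's route makes explicit, and what a complete write-up of your sketch would still have to supply, is the uniformity bookkeeping: that the maps $x \mapsto D_x$, $x \mapsto E_x$, $x \mapsto V_x$ are uniformly $\alpha$-computable, so that your operator $\Phi$ is genuinely $\Sigma_1(L_\alpha)$.
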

\begin{proof}
Let $S_e:=\aeop{e}((X_s \cup (M_s \cap V_x)) \oplus U)$.
We first prove $W \le_{\alpha e} S_e$.
Note $K_\gamma \subseteq W \iff$
$\forall \langle a,b \rangle \in K_\gamma. \langle a, b \rangle \in W \iff$
$\forall \langle a,b \rangle \in K_\gamma. \exists x \in M_s [a \in D_x \land b \in E_x \land x \in S_e] \iff$
$\forall \langle a,b \rangle \in K_\gamma. \exists x \in M_s[\langle a,b \rangle \in P_x \land x \in S_e]$ where $i_P:\alpha \to \alpha \in \Sigma_1(L_\alpha)$ is a function of \cref{lemma_alpha_computable_indexing_of_computed_sets}iv and $P_x := K_{i_P(x)}$.
Define $\phi(\gamma, \delta) :\iff \forall y \in K_\gamma \exists x \in K_\delta. y\in P_x$.
Define $V := \{\langle \gamma, \delta \rangle : K_\delta \subseteq M_s \land \phi(\gamma, \delta)\}$.
Then continuing $K_\gamma \subseteq W \iff$
$\forall y \in K_\gamma \exists x \in M_s [y \in P_x \land x \in S_e] \iff$
$\exists \delta [K_\delta \subseteq M_s \land K_\delta \subseteq S_e \land \phi(\gamma, \delta)] $
(Where $K_\delta \in L_\alpha$ has to exist as an image of an $\alpha$-computable function restricted to an $K_\gamma \in L_\alpha$ by the admissibility of $\alpha$.)
$\iff$
$\exists \langle \gamma, \delta \rangle \in V. K_\delta \subseteq S_e$.
Note $\phi(\gamma,\delta) \iff \exists H [H=w(\gamma, \delta) \land \forall y \in K_\gamma \exists x \in K_\delta. \langle x, y \rangle \in H]$ where $w:\alpha \times \alpha \to \alpha \in \Sigma_1(L_\alpha)$ with $K_{w(\gamma, \delta)} := \{\langle x, y \rangle : x \in K_\delta \land y \in K_\gamma \land y \in P_x\}$ is a function of \cref{lemma_alpha_computable_indexing_of_computed_sets}v.
Hence $\phi(\gamma, \delta) \in \Sigma_1(L_\alpha)$.
As $M_s \in \Delta_1(L_\alpha)$ by $M_s \in \Delta_1(L_\alpha)$, so $V \in \Sigma_1(L_\alpha)$.
Therefore $W \le_{w\alpha e} S_e$.

Note $V_x \in \Sigma_1(L_\alpha)$. By the assumptions $M_s \in \Sigma_1(L_\alpha)$ and $X_s \in L_s$ it is true that $M_s \in \Sigma_1(L_\alpha)$ and $X_s \in \Sigma_1(L_\alpha)$.
Thus $(X_s \cup (M_s \cap V_x)) \in \Sigma_1(L_\alpha)$.
Hence $S_e \le_{w\alpha e} (X_s \cup (M_s \cap V_x)) \oplus U \le_{\alpha e} U$ by \cref{lemma_enum_from_aeop} and \cref{lemma_join_ce_reduction} respectively. Hence $S_e \le_{w\alpha e} U$ by \cref{fact_wae_ae_implies_wae}.

As $U^+$ is megaregular, so $S_e \le_{\alpha e} U$ by \cref{prop_correspondence_wae_ae}.
Hence $W \le_{w\alpha e} U$ by \cref{fact_wae_ae_implies_wae}.
Finally, $W \le_{\alpha e} U$ by the megaregularity of $U^+$ again.
\end{proof}

\begin{lemma}\label{lemma3_thm_intersection_of_vx_alpha_fin}
Let $I \in L_\alpha$. Then exists an index $z < \alpha$  which is $\alpha$-computable from $I$ st $V_z = \bigcap_{x \in I} V_x$.
\end{lemma}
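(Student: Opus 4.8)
The plan is to reduce the intersection of the sets $V_x$ to a single membership condition phrased in terms of unions of the indexing sets $D_x$ and $E_x$. First I would unfold the definition: for any $y < \alpha$,
\[
y \in \bigcap_{x \in I} V_x \iff \forall x \in I [D_x \subseteq D_y \land E_x \subseteq E_y] \iff \Bigl(\bigcup_{x \in I} D_x\Bigr) \subseteq D_y \land \Bigl(\bigcup_{x \in I} E_x\Bigr) \subseteq E_y.
\]
Write $\hat{D} := \bigcup_{x \in I} D_x$ and $\hat{E} := \bigcup_{x \in I} E_x$. The crucial observation is that this reformulation is \emph{exactly} the membership condition defining $V_z$ for any index $z$ satisfying $D_z = \hat{D}$ and $E_z = \hat{E}$, so the whole problem collapses to producing such a $z$.

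The next step is to check that $\hat{D}, \hat{E}$ are legitimately $\alpha$-finite and that the required index exists. Since $I \in L_\alpha$ and $x \mapsto D_x$ is $\alpha$-computable by \cref{lemma_numbering_of_pairs_of_alpha_fin_sets}, the family $\{D_x : x \in I\}$ is the image of an $\alpha$-finite set under an $\alpha$-computable function, hence $\alpha$-finite by the admissibility of $\alpha$; consequently $\hat{D}$ is an $\alpha$-finite union of $\alpha$-finite sets and so $\hat{D} \in L_\alpha$ by \cref{prop_alpha_finite_union}, and symmetrically $\hat{E} \in L_\alpha$. Because $(\hat{D}, \hat{E})$ is then a pair of $\alpha$-finite subsets of $\alpha$, \cref{lemma_numbering_of_pairs_of_alpha_fin_sets} furnishes an index $z < \alpha$ with $D_z = \hat{D}$ and $E_z = \hat{E}$, whence $V_z = \bigcap_{x \in I} V_x$ by the equivalence above.

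Finally I would verify the $\alpha$-computability of $z$ from $I$. The map $I \mapsto (\hat{D}, \hat{E})$ is $\alpha$-computable via the union operation of \cref{lemma_alpha_computable_indexing_of_computed_sets}i applied to the two numberings $x \mapsto D_x$ and $x \mapsto E_x$, and the passage from the pair $(\hat{D}, \hat{E})$ to its index $z$ is $\alpha$-computable through the explicit bijections $j : \alpha \to L_\alpha$ and $f : \alpha \to \alpha \times \alpha$ occurring in the proof of \cref{lemma_numbering_of_pairs_of_alpha_fin_sets}, whose inverses are again $\alpha$-computable; composing these yields $z$ as an $\alpha$-computable function of $I$. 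I do not anticipate a genuine obstacle here. The only real subtlety is ensuring that the two unions remain bounded below $\alpha$, which is precisely what the admissibility of $\alpha$ (through \cref{prop_alpha_finite_union}) guarantees, so the bulk of the work is routine bookkeeping with the indexing lemmas rather than any substantive difficulty.
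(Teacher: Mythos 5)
Your proposal is correct and follows essentially the same route as the paper: take $\hat{D} = \bigcup_{x \in I} D_x$ and $\hat{E} = \bigcup_{x \in I} E_x$, observe that $\bigcap_{x \in I} V_x = \{y < \alpha : \hat{D} \subseteq D_y \land \hat{E} \subseteq E_y\} = V_z$ for any $z$ indexing the pair $(\hat{D},\hat{E})$, and obtain $z$ $\alpha$-computably from $I$ via \cref{lemma_alpha_computable_indexing_of_computed_sets}i. The paper compresses your admissibility and index-computation bookkeeping into a single appeal to that lemma, but the argument is the same.
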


\begin{proof}
Define $f(I)=z :\iff D_z = \bigcup_{x \in I} D_x \land E_z = \bigcup_{x \in I} E_x$.
By \cref{lemma_alpha_computable_indexing_of_computed_sets}i the function $f$ is total and $\alpha$-computable.
Also $\bigcap_{x \in I} V_x = \{y < \alpha : \bigcup_{x \in I} D_x \subseteq D_y \land \bigcup_{x \in I} E_x \subseteq E_y\} = V_{f(I)}=V_z$ as required.
\end{proof}

\begin{lemma}\label{lemma4_thm_z_unbounded}
Let $D \subseteq A \subseteq \alpha$ and $E \subseteq B \subseteq \alpha$ satisfying $A, B \not\in \Sigma_1(L_\alpha)$ and $D, E \in L_\alpha$.
Define $Z:=Z_{D,E}:=\{ x < \alpha : D \subseteq D_x \subseteq A \land E \subseteq E_x \subseteq B \}$.
Then:\\
i) $Z \equiv_{\alpha e} A \oplus B$,\\
ii) $\overline{Z} \le_{w\alpha e} \overline{A} \oplus \overline{B}$,\\
iii) $Z \not\in \Sigma_1(L_\alpha)$,\\
iv) $Z$ is unbounded if $A \oplus B$ is megaregular.
\end{lemma}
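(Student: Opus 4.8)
The four parts are naturally proved in the stated order, since (iii) uses (i) and (iv) uses (i)--(iii). The engine throughout is that membership $x \in Z$ splits into the two $\alpha$-computable conditions $D \subseteq D_x$ and $E \subseteq E_x$ together with the two positive-oracle conditions $D_x \subseteq A$ and $E_x \subseteq B$.

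For (i), to obtain $Z \le_{\alpha e} A \oplus B$ I would fold the two positive conditions into one by setting $F_x := \{2a : a \in D_x\} \cup \{2b+1 : b \in E_x\}$, an $\alpha$-finite set whose index is $\alpha$-computable from $x$ by \cref{lemma_alpha_computable_indexing_of_computed_sets}. Then $x \in Z \iff x \in X \land F_x \subseteq A \oplus B$, where $X := \{x < \alpha : D \subseteq D_x \land E \subseteq E_x\}$ is $\alpha$-computable, so \cref{lemma_y_ae_from_x_join_a} (applied to the family $F_x$ and oracle $A \oplus B$) gives $Z \le_{\alpha e} X \oplus (A \oplus B)$, whence $Z \le_{\alpha e} A \oplus B$ by \cref{lemma_join_ce_reduction}. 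For the converse $A \oplus B \le_{\alpha e} Z$, I would exploit that the pairing of \cref{lemma_numbering_of_pairs_of_alpha_fin_sets} realizes \emph{every} $\alpha$-finite pair: given $K_\gamma$, one has $K_\gamma \subseteq A \iff \exists x \in Z.\, K_\gamma \subseteq D_x$, the forward direction witnessed by the index of the pair $(D \cup K_\gamma, E)$, which is legitimate precisely because $D \subseteq A$ and $E \subseteq B$; symmetrically $K_\gamma \subseteq B \iff \exists x \in Z.\, K_\gamma \subseteq E_x$. These two enumeration reductions combine to $A \oplus B \le_{\alpha e} Z$, giving $Z \equiv_{\alpha e} A \oplus B$.

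For (ii), I would write $x \in \overline{Z}$ as the disjunction ``$D \not\subseteq D_x$ or $E \not\subseteq E_x$ or $(\exists a \in D_x.\, a \in \overline{A})$ or $(\exists b \in E_x.\, b \in \overline{B})$'' and read off a weak enumeration operator from $\overline{A} \oplus \overline{B}$: the first two disjuncts are $\alpha$-computable and get an empty axiom, while each $a \in D_x$ contributes an axiom querying $2a$ and each $b \in E_x$ an axiom querying $2b+1$ of $\overline{A} \oplus \overline{B}$. This operator is $\Sigma_1(L_\alpha)$, so $\overline{Z} \le_{w\alpha e} \overline{A} \oplus \overline{B}$. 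For (iii), I would suppose $Z \in \Sigma_1(L_\alpha)$; then the reduction from (i) exhibits $A = \{a : \exists x \in Z.\, a \in D_x\}$ as $\Sigma_1(L_\alpha)$, contradicting $A \not\in \Sigma_1(L_\alpha)$.

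Part (iv) is where the real work lies and is the step I expect to be the main obstacle: a direct unboundedness argument is hopeless, because the index assigned by \cref{lemma_numbering_of_pairs_of_alpha_fin_sets} is arbitrary and the indices of the (many) admissible pairs $(\hat{D},\hat{E})$ with $D \subseteq \hat{D} \subseteq A$, $E \subseteq \hat{E} \subseteq B$ need not be cofinal in $\alpha$. Instead I would argue by contradiction through relativized definability. Suppose $Z$ is bounded. From (i), $Z \le_{\alpha e} A \oplus B$, so $Z \in \Sigma_1(L_\alpha, (A \oplus B)^+)$; from (ii) together with $\overline{A} \oplus \overline{B} = \overline{A \oplus B}$, we get $\overline{Z} \le_{w\alpha e} \overline{A \oplus B}$, so $\overline{Z} \in \Sigma_1(L_\alpha, (A \oplus B)^-)$; both inferences are the ``always'' arrows of \cref{prop_correspondence_wae_ae}. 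Hence $Z \in \Delta_1(L_\alpha, A \oplus B)$. Since $A \oplus B$ is megaregular and $Z$ is bounded, \cref{prop_alpha_fin_iff_bounded_and_alpha_comp_with_oracle} forces $Z \in L_\alpha$, so $Z \in \Sigma_1(L_\alpha)$, contradicting (iii). Therefore $Z$ is unbounded, completing the proof.
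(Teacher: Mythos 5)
Your proof is correct and takes essentially the same route as the paper's: the same use of the surjectivity of the pair-numbering (witness pairs like $(D \cup K_\gamma, E)$) for $A \oplus B \le_{\alpha e} Z$, the same counterexample-enumerating weak operator for (ii), the same ``$Z \in \Sigma_1(L_\alpha)$ would force $A \in \Sigma_1(L_\alpha)$'' argument for (iii), and the identical argument for (iv) via $Z \in \Delta_1(L_\alpha, A \oplus B)$, boundedness, megaregularity and \cref{prop_alpha_fin_iff_bounded_and_alpha_comp_with_oracle}. The only cosmetic difference is in the forward reduction of (i), where you fold the pair into $F_x = D_x \oplus E_x$ and invoke \cref{lemma_y_ae_from_x_join_a}, while the paper splits $Z = I_{D,A} \cap I_{E,B}$ and reduces each piece by a directly constructed operator.
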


\begin{proof}
i)
First note that for all $\alpha$-finite sets $K_\gamma, K_\delta$ there is some $x < \alpha$ st $D_x = K_\gamma, E_x = K_\delta$. Hence if we require that $D_x$ (or $E_x$) is fixed to some $\alpha$-finite set $K \in L_\alpha$, still the remaining sets $E_x$ (or $D_x$) include all $\alpha$-finite sets.
Note $A \le_{\alpha e} Z$ via $W:=\{\langle \gamma, \delta \rangle : \exists x < \alpha [D \cup K_\gamma \subseteq D_x \land K_\delta = \{x\} ]\} \in \Sigma_1(L_\alpha)$. Similarly, $B \le_{\alpha e} Z$.
Consequently, $A \oplus B \le_{\alpha e} Z$.
Define $I_{D,A} := \{x < \alpha : D \subseteq D_x \subseteq A\}$.
Define $I_{E,B} := \{x < \alpha : E \subseteq E_x \subseteq B\}$.
$I_{D,A} \le_{\alpha e} A$ via $W_A := \{ \langle \gamma, \delta \rangle : \forall x \in K_\gamma. D \subseteq D_x \land \bigcup_{x \in K_\gamma} D_x = K_\delta\} \in \Sigma_1(L_\alpha)$.
Similarly $I_{E,B} \le_{\alpha e} B$.
Note that $Z = I_{D,A} \cap I_{E,B}$.
Thus $Z \le_{\alpha e} I_{D,A} \oplus I_{E,B} \le_{\alpha e} A \oplus B$.
Therefore $A \oplus B \equiv_{\alpha e} Z$.

ii) Note that $\overline{I_{D,A}} \le_{w\alpha e} \overline{A}$ via $\Phi^{vw} := \{\langle x, \delta \rangle : \exists y < \alpha [y \not\in D_x \land y \in D \land K_\delta = \emptyset \lor y \in D_x \land K_\delta = \{y\}\}$.
Similarly, $\overline{I_{E,B}} \le_{w\alpha e} \overline{B}$.
Hence $\overline{Z} = \overline{I_{D,A}} \cup \overline{I_{E,B}} \le_{w\alpha e} \overline{A} \oplus \overline{B}$ as required.

iii)
If $Z \in \Sigma_1(L_\alpha)$, then $Z \in \Sigma_1(L_\alpha)$ and $A \in \Sigma_1(L_\alpha)$, $B \in \Sigma_1(L_\alpha)$ which contradicts the assumption. Hence $Z \not \in \Sigma_1(L_\alpha)$.

iv) From ii) and megaregularity of $A \oplus B$, we have $\overline{Z} \le_{\alpha e} \overline{A} \oplus \overline{B}$. Note $\overline{A \oplus B} = \overline{A} \oplus \overline{B}$. Combining this with i) it yields $Z \le_\alpha A \oplus B$. Hence $Z \in \Delta_1(L_\alpha, A, B)$. If $Z$ was bounded, then by \cref{prop_alpha_fin_iff_bounded_and_alpha_comp_with_oracle} using the megaregularity of $A \oplus B$, $Z$ is $\alpha$-finite. This contradicts iii). Hence $Z$ has to be unbounded.
\end{proof}

\begin{defn}(Weak halting set)\\
The \emph{weak halting set} is defined as $K(A):=\{x < \alpha : x \in \Phi_x(A)\}$.
\end{defn}

\begin{thm}\label{thm_kalimullin_thm2.5}\footnote{Theorem 2.5 in \cite{sh2003definability} for $\alpha=\omega$.}
Let $A, B, U \subseteq \alpha$. Let one of the conditions hold:\\
i) the projectum of $\alpha$ is $\alpha^* = \omega$ and $U^+$ is megaregular.\\
ii) $A \oplus B \oplus K(U)$ is megaregular.

Suppose $\neg \ukpair{U}{A}{B}$.
Then $\exists X, Y \subseteq \alpha [Y \le_{\alpha e} X \oplus A \land Y \le_{\alpha e} X \oplus B \land Y \not\le_{w \alpha e} X \oplus U]$.
%Moreover, sets $X, Y$ may be taken to be computable in $A \oplus B \oplus K(U)$.
\end{thm}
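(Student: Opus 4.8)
The plan is to prove the contrapositive in spirit: assuming $\neg\ukpair{U}{A}{B}$, I will build sets $X$ and $Y$ witnessing the conclusion. The natural candidate for $Y$ comes from the machinery already assembled in Lemmas \ref{lemma2_thm_kalimullin_thm2.5}--\ref{lemma4_thm_z_unbounded}: I would take $Y$ to be (essentially) a set of indices $x$ such that $D_x \subseteq A$ and $E_x \subseteq B$, so that by \cref{lemma_y_ae_from_x_join_a} the reductions $Y \le_{\alpha e} X \oplus A$ and $Y \le_{\alpha e} X \oplus B$ hold almost for free once $X$ encodes the membership conditions $x \in X$. The real content is to choose $X$ so that $Y \not\le_{w\alpha e} X \oplus U$.

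The strategy for the non-reduction is a diagonalization against all weak $\alpha$-enumeration operators $\Phi_e$. The failure of the Kalimullin pair means that for \emph{every} witness candidate $W \le_{\alpha e} U$ we have $A \times B \not\subseteq W$ or $\overline{A}\times\overline{B}\not\subseteq\overline{W}$; equivalently, there is no $W \le_{\alpha e} U$ separating $A \times B$ from $\overline{A}\times\overline{B}$. I would feed each operator $\Phi_e$ into the construction of a candidate witness of the shape appearing in \cref{lemma2_thm_kalimullin_thm2.5}, namely $W_e := \{\langle a,b\rangle : \exists x\,[a \in D_x \land b \in E_x \land x \in \Phi_e((X_s \cup (M_s \cap V_x)) \oplus U)]\}$. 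By that lemma, under the megaregularity hypothesis on $U^+$ (case i) each such $W_e$ satisfies $W_e \le_{\alpha e} U$, so it is a legitimate witness candidate; since $\neg\ukpair{U}{A}{B}$, no $W_e$ can be a genuine witness. I would exploit this failure at stage $e$ to diagonalize: find a pair $(a,b)\in A\times B$ escaping $W_e$, or a pair in $\overline A\times\overline B$ captured by $W_e$, and use \cref{lemma3_thm_intersection_of_vx_alpha_fin} to collapse the finitely many constraints $V_x$ into a single $V_z$, thereby enumerating the right index into (or keeping it out of) $X$ so as to defeat $\Phi_e$ as a reduction of $Y$ to $X \oplus U$.

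The construction is a stagewise build of $X = \bigcup_s X_s$ together with an auxiliary $\Sigma_1$ set $M_s$ tracking which indices remain active, exactly matching the $X_s \in L_\alpha$, $M_s \in \Sigma_1(L_\alpha)$ hypotheses of \cref{lemma2_thm_kalimullin_thm2.5}. At each stage I attend to a single requirement $R_e : \Phi_e(X \oplus U) \neq Y$, using \cref{lemma4_thm_z_unbounded} to guarantee that the set $Z_{D,E}$ of available extensions $x$ with $D \subseteq D_x \subseteq A$, $E \subseteq E_x \subseteq B$ is unbounded (this is where the megaregularity of $A \oplus B$ is invoked, and under case ii it is packaged together with $K(U)$), so there is always room to act. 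The two hypothesis regimes are handled uniformly: case i uses $\alpha^*=\omega$ to reindex the requirements by $\omega$-many indices via \cref{prop_indexing_ace_sets_with_proj}, keeping the priority ordering of order type $\omega$ so the injury/verification is classical; case ii instead leans on the stronger megaregularity of $A\oplus B\oplus K(U)$ to certify directly that the relevant diagonalizing set is unbounded and that $Y$ is genuinely not reducible.

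The main obstacle will be the verification that the diagonalization succeeds \emph{globally} rather than at a single stage — i.e. that the final $Y$ genuinely fails $Y \le_{w\alpha e} X \oplus U$ for every $e$ simultaneously, despite the transfinite length of the construction. The delicate point is controlling the interaction between the megaregularity hypothesis and the unboundedness supplied by \cref{lemma4_thm_z_unbounded}: I must ensure that acting for $R_e$ only enumerates $\alpha$-finitely much into $X_s$ at each stage (so that $X_s \in L_\alpha$ is preserved and \cref{lemma2_thm_kalimullin_thm2.5} keeps applying), while the cumulative diagonalization witnesses avoid being re-absorbed by later stages. I expect the cleanest route is to argue that if $Y \le_{w\alpha e} X \oplus U$ via some $\Phi_e$, then the associated $W_e$ from \cref{lemma2_thm_kalimullin_thm2.5} would in fact witness $\ukpair{U}{A}{B}$, contradicting the hypothesis; the bulk of the work is making this implication precise by checking that the inclusions $A \times B \subseteq W_e$ and $\overline A \times \overline B \subseteq \overline{W_e}$ follow from the supposed reduction, using \cref{prop_enum_op_witness} to pull back membership to an $\alpha$-finite use.
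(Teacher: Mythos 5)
Your proposal matches the paper's own proof in all essentials: the same stagewise construction of $X$ with the auxiliary $\Sigma_1$ sets $M_s$, the invariant $x \in Y \iff x \in X \land D_x \subseteq A \iff x \in X \land E_x \subseteq B$ feeding \cref{lemma_y_ae_from_x_join_a}, the candidate witness $W_e$ of \cref{lemma2_thm_kalimullin_thm2.5} whose failure (forced by $\neg\ukpair{U}{A}{B}$) yields at each stage either a pair in $\overline{A}\times\overline{B}$ captured by $W_e$ or an index satisfying the diagonalizing condition, with \cref{lemma3_thm_intersection_of_vx_alpha_fin} and \cref{lemma4_thm_z_unbounded} plus megaregularity (or $\alpha^*=\omega$) handling the bookkeeping and limit stages. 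The paper carries out exactly the plan you outline, including your identified key verification point that witnesses are not re-absorbed, which it secures via the invariants $X_s \subseteq X_{s+1}$, $M_{s+1}\subseteq M_s$, $X_{s+1}-X_s \subseteq M_{s+1}$ giving $X \subseteq X_s \cup M_{s+1}$ and then applying monotonicity.
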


The following proof is a generalization of the proof for the case when $\alpha=\omega$ in \cite{sh2003definability}.

\begin{proof}
We perform a construction in $\alpha^*$ stages and define sets $X, Y$ st $\forall x < \alpha$:\\
\begin{instat}\label{k_t2.5_s2.1}
x \in Y \iff x \in X \land D_x \subseteq A \iff x \in X \land E_x \subseteq B
\end{instat}\\
which guarantees $Y \le_{\alpha e} X \oplus A$ and $Y \le_{\alpha e} X \oplus B$ by \cref{lemma_y_ae_from_x_join_a}.

Index the requirements and $\alpha$-enumeration operators by indices in $\alpha^*$ using \cref{prop_indexing_ace_sets_with_proj}.
Aim to meet for all $e <\alpha^*$ the requirements
\begin{align*}R_e: Y \not=\aeop{e}(X \oplus U).\end{align*}
At each stage $s < \alpha^*$ of the construction aim to define an $\alpha$-finite set $X_s$ and an $\alpha$-computable set $M_s$ so that for all $s < \alpha^*$ they satisfy:

\begin{gather}
X_s \subseteq X_{s+1}\label{k_t2.5_s2.2}\\
M_{s+1} \subseteq M_s\label{k_t2.5_s2.3}\\
X_{s+1} - X_s \subseteq M_{s+1}\label{k_t2.5_s2.4}\\
\forall D, E \in L_\alpha [D \subseteq A \land E \subseteq B \implies \exists x \in M_s [D \subseteq D_x \subseteq A \land E \subseteq E_x \subseteq B]]\label{k_t2.5_s2.5}\\
X_s \in L_\alpha\label{k_t2.5_s_x_s}\\
N_s \in L_\alpha\label{k_t2.5_s_n_s}\\
I_s \in L_\alpha\label{k_t2.5_s_i_s}\\
M_s := (\bigcap_{x \in I_s} V_x) - N_s = V_z - N_s\label{k_t2.5_m_s_eq}\\
M_s \in \Delta_1(L_\alpha)\label{k_t2.5_s_m_s}
\end{gather}

\subsubsection{Pre-construction}

By \cref{k_t2.5_m_s_eq}, the set $M_s$ is defined at every stage $s < \alpha^*$ by the sets $N_s$ and $I_s$.
Since the set $I_s$ is $\alpha$-finite at the stage $s$ by \cref{k_t2.5_s_i_s}, so by \cref{lemma3_thm_intersection_of_vx_alpha_fin} there is an index $z$ which is $\alpha$-computable from $I_s$ and $V_z = \bigcap_{x \in I_s} V_x$. Hence the equality $(\bigcap_{x \in I_s} V_x) - N_s=V_z - N_s$ holds at every stage $s$ where $I_s \in L_\alpha$. Consequently also the set $V_z$ is $\alpha$-computable at such stage $s$.

Since the set $N_s$ is $\alpha$-finite by \cref{k_t2.5_s_n_s} and $V_z$ is $\alpha$-computable at the stage $s$, so the set $M_s$ has to be $\alpha$-computable at the stage $s$, hence \cref{k_t2.5_s_m_s} holds.

When proving at the stage $s < \alpha^*$ that \cref{k_t2.5_s2.5} holds, we use the fact that $A$ and $B$ are not $\alpha$-finite by \cref{prop_trivial_u_k_pair} since $\neg \ukpair{U}{A}{B}$. This given $\alpha$-finite sets $D, E$, enables us to find arbitrarily large $\alpha$-finite supersets of $D, E$ contained in $A$ and $B$ respectively.

\subsubsection{Constructing $X$}
The set $X$ will be constructed in $\alpha^*$-many stages.
\begin{itemize}
\item Stage $s=0$.
Set $X_0 := \emptyset$, $N_0 := \emptyset$, $I_0 := \emptyset$.
Observe \cref{k_t2.5_s2.5} is true for $M_0=\alpha$.
Clearly, \cref{k_t2.5_s_x_s,k_t2.5_s_n_s,k_t2.5_s_i_s} are satisfied.

\item Stage $s+1=3e > 0$, $3e$ is a successor ordinal.
Define $X_{s+1} := X_s$, $N_{s+1} := N_s$, $I_{s+1} := I_s$.
Since the sets $X_{s+1}$, $N_{s+1}$, $I_{s+1}$ are the same as the sets $X_{s}$, $N_{s}$, $I_s$ and \cref{k_t2.5_s2.2,k_t2.5_s2.3,k_t2.5_s2.4,k_t2.5_s2.5,k_t2.5_s_x_s,k_t2.5_s_n_s,k_t2.5_s_i_s} hold at the stage $s$ by IH, they hold at the stage $s+1$ too.

\item Stage $s+1=3e+1$.
By induction hypothesis let $X_s$, $N_s$, $I_s$ be given and $\alpha$-finite by \cref{k_t2.5_s_x_s,k_t2.5_s_n_s,k_t2.5_s_i_s}.
Define $X_{s+1}:=X_s$, $N_{s+1} := N_s \cup \{e\}$, $I_{s+1} := I_s$.
Trivially, \cref{k_t2.5_s_x_s,k_t2.5_s_n_s,k_t2.5_s_i_s} hold at the stage $s+1$ by IH at the stage $s$.

Note $M_{s+1} = M_s - \{e\}$ by \cref{k_t2.5_m_s_eq}.
We claim that the set $M_{s+1}$ satisfies \cref{k_t2.5_s2.5}.
Let $D, E \in L_\alpha \land D \subseteq A \land E \subseteq B$.
By IH on $M_s$ there is $x \in M_s$ st $[D \subseteq D_x \subseteq A \land E \subseteq E_x \subseteq B]$.
Note $D_x \in L_\alpha$, but by \cref{prop_trivial_u_k_pair} $A \not\in L_\alpha$, hence $D_x \subset A$.
Let $z \in A - D_x$.
Then $\hat{D}:=D_x \cup \{z\} \in L_\alpha$.
By IH on $M_s$ there is $y \in M_s$ st $\hat{D} \subseteq D_y \subseteq A \land E \subseteq E_y \subseteq B$.
If $x \not=e$, then $x \in M_{s+1} := M_s - \{e\}$.
Otherwise $x = e \not= y$ and $y \in M_{s+1} \land D \subseteq D_x \subset \hat{D} \subseteq D_y \subseteq A \land E \subseteq E_y \subseteq B$.
Therefore in any case the set $M_{s+1}$ satisfies \cref{k_t2.5_s2.5}.

\item Stage $s+1=3e+2$.
Aim to find $x\in M_s$ st one of the two following statements is true:\\
1: $D_x \not\subseteq A \land E_x \not\subseteq B \land x \in \aeop{e}((X_s \cup (M_s \cap V_x)) \oplus U)$,\\
2: $D_x \subseteq A \land E_x \subseteq B \land x \not\in \aeop{e}((X_s \cup (M_s \cap V_x)) \oplus U)$.\\
First we prove the existence of such $x \in M_s$.
Assume that $\forall x \in M_s$ the statement 2 is false.
Define
\begin{align*}
W := \{\langle a,b \rangle : \exists x \in M_s [a \in D_x \land b \in E_x \land x \in \aeop{e}((X_s \cup (M_s \cap V_x)) \oplus U)]\}.
\end{align*}

Then $W \le_{\alpha e} U$ by the regularity of $U^+$, \cref{lemma2_thm_kalimullin_thm2.5}, \cref{k_t2.5_s_m_s} and \cref{k_t2.5_s_x_s}.

We prove $A \times B \subseteq W$.
Let $(a,b) \in A \times B$.
By \cref{k_t2.5_s2.5} for $M_s$ it follows $\exists x \in M_s [a \in D_x \subseteq A \land b \in 	E_x \subseteq B]$.
Since statement 2 is false, we have $x \in \aeop{e}((X_s \cup (M_s \cap V_x)) \oplus U)$.
Thus $(a,b) \in W$.
Since $\neg \ukpair{U}{A}{B}$, there is a pair $(a,b) \in \overline A \times \overline B$ st $(a,b) \in W$.
Thus there is $x \in M_s$ st $a \in D_x, b\in E_x$ and $x \in \aeop{e}((X_s \cap (M_s \cap V_x)) \oplus U)$.
Hence $D_x \not\subseteq A$, $E_x \not\subseteq B$ and statement 1 is true for $x \in M_s$.
Therefore there is $x \in M_s$ st statement 1 or statement 2 is true.
Choose such an element $x \in M_s$ using the oracle $A \oplus B \oplus K(U)$.

Case 1: If statement 1 is true for $x$, then $x \in \aeop{e}((X_s \cup (M_s \cap V_x) \oplus U)$.
By \cref{prop_enum_op_witness} and \cref{prop_enum_op_monotonicity} there is $F \subseteq X_s \cup (M_s \cap V_x)$ st $F \in L_\alpha \land x \in \aeop{e}(F \oplus U)$.
Thus define $X_{s+1} := X_s \cup F$, $N_{s+1} := N_s$, $I_{s+1}:=I_s$. Note that$M_{s+1} := M_s$.
The set $F$ is $\alpha$-finite, by IH $X_s$ is $\alpha$-finite and so the union $X_{s+1} = X_s \cup F$ is $\alpha$-finite satisfying \cref{k_t2.5_s_x_s}.
\Cref{k_t2.5_s_n_s,k_t2.5_s_i_s} are true by IH.

Case 2: Otherwise if statement 2 is true for $x$, then define $X_{s+1} := X_s \cup \{x\}$, $N_{s+1} := N_s$, $I_{s+1} := I_s \cup \{x\}$.
Trivially, the sets $X_{s+1}, N_{s+1}, I_{s+1}$ are $\alpha$-finite using IH, hence satisfying \cref{k_t2.5_s_x_s,k_t2.5_s_n_s,k_t2.5_s_i_s}.
Note $M_{s+1} = M_s \cap V_x$ by \cref{k_t2.5_m_s_eq}.
$M_{s+1}$ satisfies \cref{k_t2.5_s2.5}: if $D \subseteq A, E\subseteq B, D \in L_\alpha, E \in L_\alpha$, then by the hypothesis on $M_s$, there is $y \in M_s$ st $D \cup D_x \subseteq D_y \subseteq A$ and $E \cup E_x \subseteq E_y \subseteq B$.
Therefore $y \in M_s \cap V_x = M_{s+1}$.

Note in both cases $X_{s+1}-X_s \subseteq M_{s+1}$ \cref{k_t2.5_s2.4} being satisfied.

%TODO prove that 2.5 holds in the limit case so that the requirement $\alpha^*=\omega$ could be removed.
\item Stage $s=3e > 0$, $3e$ is a limit ordinal.
If $\alpha^*=\omega$, then this stage does not arise. Hence assume that $A \oplus B \oplus K(U)$ is megaregular.

Define $X_s := \bigcup_{r < s}X_r,  N_s := \bigcup_{r < s}N_r, I_s := \bigcup_{r < s}I_r$.
We claim that these sets are $\alpha$-finite.

Define a partial function $f:\alpha \rightharpoonup \alpha$ on the ordinals smaller than $s$ by $f(r)=\{\gamma < \alpha : K_\gamma = X_r\}$.
Note that by IH for all $r < s$, the set $X_r$ is $\alpha$-finite using \cref{k_t2.5_s_x_s}.
Also during the construction we only use the oracle $A \oplus B \oplus K(U)$. Thus the index $f(r)$ of an $\alpha$-finite set $X_r$ is also $A \oplus B \oplus K(U)$-computable. Consequently, the function $f$ is $\Sigma_1(L_\alpha, A \oplus B \oplus K(U))$ definable. As $s < \alpha^*$, so $s$ as a limit ordinal is an $\alpha$-finite set. Therefore by the megaregularity of $A \oplus B \oplus K(U)$, the set $f[s]$ is also $\alpha$-finite. But then $X_s = \bigcup_{\gamma \in f[s]} K_\gamma$ is $\alpha$-finite by \cref{prop_alpha_finite_union}. So \cref{k_t2.5_s_x_s} holds at the stage $s$ as required.
Applying similar reasoning, using the veracity of \cref{k_t2.5_s_n_s,k_t2.5_s_i_s} for all $r < s$ by IH, we conclude that \cref{k_t2.5_s_n_s,k_t2.5_s_i_s} hold at the stage $s$ too.

Note $M_s := \bigcap_{r < s}M_r$ by \cref{k_t2.5_m_s_eq}. We prove that \cref{k_t2.5_s2.5} holds at the stage $s$.
Note that $M_s = V_z - N_s$ by \cref{k_t2.5_m_s_eq} for some $z < \alpha$ satisfying both $D_z \subseteq A$ and $E_z \subseteq B$.
Fix $\alpha$-finite sets $D$ and $E$ st $D \subseteq A$ and $E \subseteq B$.
WLOG let $D_z \subseteq D$ and $E_z \subseteq E$.
Define $Z:=\{ x < \alpha : D \subseteq D_x \subseteq A \land E \subseteq E_x \subseteq B \}$.
As $\neg \kpair{A}{B}$ by the assumption, so $A \not\in \Sigma_1(L_\alpha)$ and $B \not\in \Sigma_1(L_\alpha)$ by \cref{prop_trivial_u_k_pair}.
Note that $A \oplus B$ is megaregular.
Hence $Z$ is unbounded by \cref{lemma4_thm_z_unbounded}.
On the other hand $N_s \subseteq s$. Thus $Z - N_s \not= \emptyset$.
Note $\{x \in M_s :D \subseteq D_x \subseteq A \land E \subseteq E_x \subseteq B\}=\{x \in V_z - N_s :D \subseteq D_x \subseteq A \land E \subseteq E_x \subseteq B\} = Z - N_s \not= \emptyset$.
Therefore $\forall D, E \in L_\alpha [D \subseteq A \land E \subseteq B \implies \exists x \in M_s [D \subseteq D_x \subseteq A \land E \subseteq E_x \subseteq B]]$ and so the statement \cref{k_t2.5_s2.5} is satisfied at the limit stage $s$.

\end{itemize}

Finally, define $X := \bigcup_{s < \alpha^*} X_s$.

\subsubsection{Defining $Y$}
To define $Y$ first prove $\forall z \in X [D_z \subseteq A \iff E_z \subseteq B]$:
Let $z \in X$.
Then there is a stage $s+1=3e+2$ st $z \in X_{s+1} - X_s$.
In case 2 $D_z \subseteq A$ and $E_z \subseteq B$.
In case 1 there is $x$ st $X_{s+1} - X_s \subseteq V_x, D_x \not\subseteq A$ and $E_x \not\subseteq B$.
As $z \in X_{s+1} - X_s \subseteq V_x$, so $D_x \subseteq D_z$ and $E_x \subseteq E_z$.
Thus $D_z \not\subseteq A$ and $E_z \not\subseteq B$.
Define the set
\begin{center}
$Y := \{z \in X : D_z \subseteq A\}=\{z \in X : E_z \subseteq B\}$.
\end{center}

\subsubsection{Final verification}
Note $Y \le_{\alpha e} X \oplus A$ and $Y \le_{\alpha e} X \oplus B$ as proved under \cref{k_t2.5_s2.1}.

We prove $Y \not\le_{w\alpha e} X \oplus U$ by showing $Y \not=\aeop{e}(X \oplus U)$ for an arbitrary $e < \alpha^*$.
Consider a stage $s+1=3e+2$.
In case 1 $X_{s+1}=X_s \cup F$ and there is $x$ st $x \in \aeop{e}(F \oplus U), D_x \not\subseteq A$ and $E_x \not\subseteq B$.
Hence $x \in \aeop{e}(X \oplus U) - Y$.
In case 2 there is $x$ st $X_{s+1}=X_s \cup \{x\}, M_{s+1}=M_s \cap V_x, D_x \subseteq A, E_x \subseteq B$ and $x \not\in \aeop{e}((X_s \cup M_{s+1}) \oplus U)$.

Let $z \in X$.
Then $\exists t. z \in X_{t+1} - X_t \subseteq M_{t+1}$ by \cref{k_t2.5_s2.4}.
If $t \ge s$, then $z \in M_{s+1}$ by \cref{k_t2.5_s2.3}.
If $t < s$, then $z \in X_s$ by \cref{k_t2.5_s2.2}.
Hence $z \in X_s \cup M_{s+1}$ and thus $X \subseteq X_s \cup M_{s+1}$.

Hence $x \in Y - \aeop{e}(X \oplus U)$ by \cref{prop_enum_op_monotonicity}.
Therefore in both cases $Y \not= \aeop{e}(X \oplus U)$ and so $Y \not\le_{w\alpha e} X \oplus U$.
\end{proof}

%done3, 27.3.2015
%TODO
%Needs to be verified that the enumerations can by done not only element-wise as int the proofs underneath, but with the alpha-finite subsets.
\begin{thm}\label{thm_k_pair_correspondence_ae}\footnote{From theorem 2.6 in \cite{sh2003definability} for $\mathcal{D}_T$.}
The statements i) - iv) are equivalent and imply v).
Moreover if the projectum of $\alpha$ is $\alpha^*=\omega$ and $U^+$ is megaregular or $A \oplus B \oplus K(U)$ is megaregular, then all the statements i) - v) are equivalent.

i) $\ukpair{U}{A}{B}$, i.e. $\exists W \le_{\alpha e} U. A \times B \subseteq W \land \overline{A} \times \overline{B} \subseteq \overline{W}$,

ii) $\exists f(x,y) \in \Delta_1(L_\alpha). \forall X \subseteq \alpha.$
$\forall x, y \in \alpha. \aeop{x}(A \oplus X) \cap \aeop{y}(B \oplus X)$ 
$\subseteq \aeop{f(x,y)}(X \oplus U)$
$\subseteq \aeop{x}(A \oplus X) \cup \aeop{y}(B \oplus X)$,

iii) $\exists f(x,y) \in \Delta_1(L_\alpha) \forall x, y < \alpha [\aeop{x}(A)=\aeop{y}(B) \implies \aeop{f(x,y)}(U) = \aeop{x}(A)]$,

iv) $\forall V_1, V_2 [V_1 \le_{\alpha e} A \land V_2 \le_{\alpha e} B \implies \exists W \le_{\alpha e} U. V_1 \cap V_2 \subseteq W \subseteq V_1 \cup V_2]$,

v) $\forall X \subseteq \alpha. \aedeg{X \oplus U} = \aedeg{A \oplus X \oplus U} \wedge \aedeg{B \oplus X \oplus U}$.
\end{thm}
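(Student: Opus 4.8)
The plan is to establish the equivalence of (i)--(iv) by the cycle (i)$\Rightarrow$(ii)$\Rightarrow$(iii)$\Rightarrow$(iv)$\Rightarrow$(i), then to derive (v) from this block, and finally, under the extra hypotheses, to close the loop with (v)$\Rightarrow$(i) via the diagonalisation of \cref{thm_kalimullin_thm2.5}. Throughout, the recurring bookkeeping point is the gap between $\le_{\alpha e}$ and $\le_{w\alpha e}$: a bare enumeration operator only yields $\aeop{e}(C) \le_{w\alpha e} C$ by \cref{lemma_enum_from_aeop}, whereas (i) and (iv) demand a genuine $\le_{\alpha e}$ witness. I would handle this by building the operators in (ii) and (iii) as \emph{compositions} with the fixed reduction $W \le_{\alpha e} U$, so that each value is an honest $\alpha$-enumeration reduction to its oracle; this is exactly why megaregularity of $U^+$ resurfaces in the upgrade steps.

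For (i)$\Rightarrow$(ii) I would use the witness $W \le_{\alpha e} U$ to define $\aeop{f(x,y)}(X \oplus U)$ to enumerate $z$ whenever there are $\alpha$-finite axioms $D \oplus G$ for $\aeop{x}$ and $E \oplus H$ for $\aeop{y}$ with $G,H \subseteq X$ and $D \times E \subseteq W$; the index $f$ is $\Delta_1(L_\alpha)$ by the $\alpha$-finite axiom indexing of \cref{lemma_alpha_computable_indexing_of_computed_sets}. The lower inclusion uses $A \times B \subseteq W$ (matching axioms $D \subseteq A$, $E \subseteq B$ force $z$ in), and the upper inclusion uses $\overline{A} \times \overline{B} \subseteq \overline{W}$: if both $D \not\subseteq A$ and $E \not\subseteq B$ there is a pair in $D \times E \cap \overline{W}$, a contradiction, so $D \subseteq A$ or $E \subseteq B$. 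For (ii)$\Rightarrow$(iii) I would set $X = \emptyset$ and absorb the uniform translation $A \oplus \emptyset \equiv_{\alpha e} A$ into $f$; when $\aeop{x}(A) = \aeop{y}(B)$ the sandwich collapses to equality. For (iv)$\Rightarrow$(i) I would instantiate (iv) at $V_1 = A \times \alpha \le_{\alpha e} A$ and $V_2 = \alpha \times B \le_{\alpha e} B$: then $V_1 \cap V_2 = A \times B$ and $V_1 \cup V_2 = \overline{\overline{A} \times \overline{B}}$, so the interpolant $W$ is exactly a Kalimullin witness.

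The delicate unconditional step is (iii)$\Rightarrow$(iv). Given $V_1 = \aeop{a}(A)$ and $V_2 = \aeop{b}(B)$, the difficulty is that the common value forced by (iii) must be simultaneously $\le_{\alpha e} A$ and $\le_{\alpha e} B$, whereas a generic interpolant is neither, so (iii) cannot be applied in one shot to the pair $(V_1,V_2)$. The plan is instead to apply the uniform $f$ across the family of matched operators — for each $\alpha$-finite condition, the operator testing membership of a coded element in $V_1$ against the corresponding membership in $V_2$ — and to assemble $W$ from the resulting $U$-enumerations, using \cref{lemma_alpha_computable_indexing_of_computed_sets} to keep the index manipulation $\alpha$-computable; this is the $\alpha$-analogue of the interpolation argument of Kalimullin and Cai and is where the $\alpha$-finite indexing machinery does the real work. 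Once (i)--(iv) are equivalent, (v) follows: $X \oplus U$ is trivially a lower bound of $A \oplus X \oplus U$ and $B \oplus X \oplus U$, and for the greatest-lower-bound property any common lower bound $C$ satisfies $C = \aeop{x}(A \oplus X') = \aeop{y}(B \oplus X')$ with $X' := X \oplus U$, whence (ii) applied at $X'$ forces $C = \aeop{f(x,y)}(X' \oplus U) \le_{\alpha e} X \oplus U$ (the composed $f$ gives the strong reduction, and $X' \oplus U \equiv_{\alpha e} X \oplus U$).

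The main obstacle is the converse (v)$\Rightarrow$(i), which is where the hypotheses of the ``moreover'' clause are consumed. I would argue by contraposition: assuming $\neg\ukpair{U}{A}{B}$, \cref{thm_kalimullin_thm2.5} (invoked under either $\alpha^* = \omega$ with $U^+$ megaregular, or $A \oplus B \oplus K(U)$ megaregular) produces $X, Y$ with $Y \le_{\alpha e} X \oplus A$, $Y \le_{\alpha e} X \oplus B$ and $Y \not\le_{w\alpha e} X \oplus U$. Then $Y$ is a common lower bound of $A \oplus X \oplus U$ and $B \oplus X \oplus U$, so (v) gives $Y \le_{\alpha e} X \oplus U$, hence $Y \le_{w\alpha e} X \oplus U$ by the unconditional arrow of \cref{prop_correspondence_wae_ae}, contradicting $Y \not\le_{w\alpha e} X \oplus U$. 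Thus the entire strength of the ``moreover'' clause is precisely the strength needed to run \cref{thm_kalimullin_thm2.5}, and no further hypotheses are required for (v)$\Rightarrow$(i) itself.
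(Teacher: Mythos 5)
Your overall architecture matches the paper's almost link for link: the same construction for (i)$\Rightarrow$(ii) (enumerate $z$ from matched $\alpha$-finite axioms $D \oplus G$, $E \oplus H$ with $D \times E$ inside the fixed witness), the same rectangles $V_1 = A \times \alpha$, $V_2 = \alpha \times B$ for (iv)$\Rightarrow$(i), the same derivation of (v) from the equivalence block, and the same use of \cref{thm_kalimullin_thm2.5} in contrapositive for (v)$\Rightarrow$(i), consuming exactly the hypotheses of the ``moreover'' clause. The one genuine departure is how the loop through (iii) is closed. The paper never proves (iii)$\Rightarrow$(iv): it proves (iii)$\Rightarrow$(i) directly, by an explicit device --- a computable $g$ with $\aeop{g(y)}(Y) = \alpha$ if $y \in Y$ and $\aeop{g(y)}(Y) = \emptyset$ otherwise, so that $\aeop{g(m)}(A) = \aeop{g(n)}(B)$ holds precisely when $m \in A \land n \in B$ or $m \notin A \land n \notin B$, whence $W := \{(m,n) : \aeop{f(g(m),g(n))}(U) \neq \emptyset\}$ is a Kalimullin witness --- and then (iv) comes for free from (i)$\Rightarrow$(ii)$\Rightarrow$(iv).

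Your direct (iii)$\Rightarrow$(iv) is where the gap sits. To invoke (iii) at all you must manufacture pairs of operators whose values on $A$ and on $B$ are provably \emph{equal}; for arbitrary $V_1 = \aeop{a}(A)$, $V_2 = \aeop{b}(B)$ this forces exactly the singleton-test operators above (now testing membership in $V_1$, $V_2$ by composing with $\aeop{a}$, $\aeop{b}$), followed by a diagonal step to turn the resulting pair-witness into an interpolant $V_1 \cap V_2 \subseteq W \subseteq V_1 \cup V_2$. None of this mechanism appears in your sketch; ``assemble $W$ from the resulting $U$-enumerations'' is precisely where the work lies. Moreover, the $W$ so assembled is prima facie only $\le_{w\alpha e} U$, while (iv) demands $\le_{\alpha e} U$, and your own framing concedes that megaregularity of $U^+$ would be used for this upgrade; but megaregularity is \emph{not} available in the unconditional part of the theorem, so your route as written yields only the conditional equivalence of (i)--(iv), which is weaker than what is claimed. (You have in fact put your finger on a subtlety the paper itself glosses over: its two-line proof of (iii)$\Rightarrow$(i) asserts without comment that the displayed $W$ satisfies $W \le_{\alpha e} U$, and the same weak-versus-strong issue lurks in its ``trivial'' (ii)$\Rightarrow$(iv) and (ii)$\Rightarrow$(v). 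But routing through (iv) amplifies this difficulty rather than avoiding it, since an interpolant for arbitrary $V_1, V_2$ must cover arbitrary $\alpha$-finite subsets of $V_1 \cap V_2$ by single axioms over $U$.) The repair is to adopt the paper's decomposition: prove (iii)$\Rightarrow$(i) with the $g$-trick and recover (iv) from (ii).
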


\begin{proof}
The implications ii) $\implies$ iii), ii) $\implies$ iv), ii) $\implies$ v), iv) $\implies$ i) are trivial.
It remains to prove the implications i) $\implies$ ii) and iii) $\implies$ i).

i) $\implies$ ii):

Assume $\exists W \le_{\alpha e} U. A \times B \subseteq W \land \overline{A} \times \overline{B} \subseteq \overline{W}$ and let $W = \aeop{}(U)$ for some $\alpha$-enumeration operator $\aeop{}$.

Define $f$ st for any $X \subseteq \alpha$, $x,y \in \alpha$:\\
$\aeop{f(x,y)}(X \oplus V) := \{z \in \alpha : \exists D, E \in L_\alpha [z \in \aeop{x}(D \oplus X) \cap \aeop{y}(E \oplus X) \land D \times E \subseteq \aeop{}(V)\}$.

Then $f$ is $\alpha$-computable and satisfies the condition ii).

iii) $\implies$ i):
Suppose that $A$ and $B$ satisfy the condition iii) with $f$ being computable.
Define a computable function $g$ st for every $Y \subseteq \alpha$ and $y < \alpha$:

$\aeop{g(y)}(Y)=
\begin{cases}
\alpha & $if $y \in Y,\\
\emptyset & $otherwise.$
\end{cases}
$

Then $A, B$ are a $U$-Kalimullin pair with a witness $W=\{(m,n) : \aeop{f(g(m),g(n))}(U) \not=\emptyset\}$.

Hence i) $\iff$ ii) $\iff$ iii) $\iff$ iv) $\implies$ v) for any $\alpha$. 
Note v) $\implies$ i) is the contrapositive of \cref{thm_kalimullin_thm2.5}.
Therefore i) $\iff$ ii) $\iff$ iii) $\iff$ iv) $\iff$ v) if $\alpha^*=\omega$ and $U^+$ is megaregular or $A \oplus B \oplus K(\emptyset)$ is megaregular.
\end{proof}

The statement i) iff v) establishes the definability of a $U$-Kalimullin pair.

%done1, 27.3.2015
\begin{prop}\label{prop_kalimullin_pair_closure}\footnote{Proposition 1.7 in \cite{caidefining} for $\alpha=\omega$.}
Let $B \subseteq \alpha$.
The set of all $A$ st $\kpair{A}{B}$ is closed downwards under $\alpha$-enumeration reducibility as well as closed under join.
\end{prop}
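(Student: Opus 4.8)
The plan is to prove two closure properties for the collection $\{A : \kpair{A}{B}\}$: closure downward under $\le_{\alpha e}$ and closure under the join $\oplus$. For the downward closure, I would start with a set $A$ such that $\kpair{A}{B}$ holds via some witness $W \in \Sigma_1(L_\alpha)$, and a set $A' \le_{\alpha e} A$ via some $\alpha$-enumeration operator, say $A' = \Phi(A)$. The aim is to build a new witness $W'$ exhibiting $\kpair{A'}{B}$. The natural construction is to put a pair $(a', b)$ into $W'$ precisely when there is an $\alpha$-finite set $K \subseteq \alpha$ that enumerates $a'$ into $A'$ (i.e. $a' \in \Phi(K)$) together with the requirement that every element of $K$ pairs with $b$ inside $W$, so that $K \times \{b\} \subseteq W$. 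I would then verify the two defining inclusions: $A' \times B \subseteq W'$ follows because for $a' \in A'$ the witnessing $\alpha$-finite set $K$ lies inside $A$ (by \cref{prop_enum_op_witness}) and hence $K \times \{b\} \subseteq A \times B \subseteq W$; and $\overline{A'} \times \overline{B} \subseteq \overline{W'}$ requires showing that if $a' \notin A'$ and $b \notin B$ then no such $K$ can exist, which uses that a witnessing $K$ forcing $a' \in \Phi(K)$ would have to meet $A$ on an element paired with $b \notin B$, contradicting $\overline{A} \times \overline{B} \subseteq \overline{W}$.

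For closure under join, I would take $A_0$ and $A_1$ both forming a $\mathcal{K}$-pair with $B$ and show $\kpair{A_0 \oplus A_1}{B}$. This is essentially \cref{lemma_k_pair_distributivity} (Kalimullin pair distributivity), which already gives $\bigwedge_{i \in 2} \kpair{A_i}{B} \iff \kpair{\bigoplus_{i \in 2} A_i}{B}$ whenever $A_0, A_1 \ne \emptyset$. So the join case reduces almost immediately to citing that lemma, with only the degenerate empty-set cases to dispatch separately (if some $A_i = \emptyset$ then it is $\alpha$-computable and the pair is trivially a $\mathcal{K}$-pair, and the join with an empty summand is $\alpha$-enumeration equivalent to the other summand).

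I expect the main obstacle to be the reverse inclusion $\overline{A'} \times \overline{B} \subseteq \overline{W'}$ in the downward closure argument. The subtlety is that ``$a' \notin A'$'' does not by itself prevent some $\alpha$-finite $K$ from enumerating $a'$ via $\Phi$ — rather, any such $K$ must fail to be contained in $A$. The argument must carefully combine the witness property of $\alpha$-enumeration operators with the negative clause $K \times \{b\} \subseteq W$ in the definition of $W'$: if $(a',b) \in W'$ were witnessed by $K$, then some $x \in K$ has $(x,b) \in W$; and for the pair to testify against membership I must extract that such an $x$ lies in $A$, forcing $a' \in \Phi(A) = A'$. Making this extraction precise — in particular ensuring the witness $K$ interacts correctly with both $W$ and $\Phi$ while only using $\Sigma_1(L_\alpha)$ definability so that $W' \in \Sigma_1(L_\alpha)$ — is where the care is needed. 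Everything else is routine bookkeeping with the indexing lemmas (\cref{lemma_alpha_computable_indexing_of_computed_sets}) to guarantee $W' \in \Sigma_1(L_\alpha)$.
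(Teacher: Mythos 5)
Your proposal is correct, but the downward-closure half takes a genuinely different route from the paper. The paper gets downward closure almost for free from \cref{thm_k_pair_correspondence_ae}: given $\kpair{A_0}{B}$ and $A_1 \le_{\alpha e} A_0$, it applies the implication (i) $\implies$ (iv) of that theorem to $V_1 := A_1 \times \alpha \le_{\alpha e} A_0$ and $V_2 := \alpha \times B \le_{\alpha e} B$, and the resulting $W_1 \in \Sigma_1(L_\alpha)$ with $V_1 \cap V_2 \subseteq W_1 \subseteq V_1 \cup V_2$ is precisely a witness for $\kpair{A_1}{B}$, since $V_1 \cap V_2 = A_1 \times B$ and $\overline{V_1} \cap \overline{V_2} = \overline{A_1} \times \overline{B}$. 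You instead build the witness by hand, $W' := \{(a',b) : \exists K \in L_\alpha\, [a' \in \Phi(K) \land K \times \{b\} \subseteq W]\}$, and verify it via the witness property (\cref{prop_enum_op_witness}) and monotonicity (\cref{prop_enum_op_monotonicity}); your key step --- if $b \notin B$ then $K \times \{b\} \subseteq W$ forces $K \subseteq A$ because $\overline{A} \times \overline{B} \subseteq \overline{W}$, hence $a' \in \Phi(K) \subseteq \Phi(A) = A'$, contradicting $a' \notin A'$ --- is exactly right. What the paper's route buys is brevity and reuse of machinery it has already established; what yours buys is self-containedness (no appeal to the equivalence theorem) and a formally stronger conclusion: since you only use the operator form $A' = \Phi(A)$, i.e.\ $A' \le_{w\alpha e} A$, your argument shows closure downwards under \emph{weak} $\alpha$-enumeration reducibility, which implies the stated closure under $\le_{\alpha e}$. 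One point to fix in your write-up: the fact that $W' \in \Sigma_1(L_\alpha)$ rests on the bounded universal quantifier $\forall x \in K\,[(x,b) \in W]$ remaining $\Sigma_1$, which is a consequence of admissibility ($\Sigma_1$-collection), not of the indexing lemmas you cite. The join half of your proof coincides with the paper's: both reduce the empty-summand case to $A_0 \oplus A_1 \equiv_{\alpha e} A_{1-i}$ together with the downward closure just proved, and otherwise cite \cref{lemma_k_pair_distributivity}.
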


\begin{proof}
Suppose $\kpair{A_0}{B}$ and $A_1 \le_{\alpha e} A_0$.
Hence $\exists W_0 \in \Sigma_1(L_\alpha). A_0 \times B \subseteq W_0 \land \overline A_0 \times \overline B \subseteq \overline W_0$. Let $V_1 := A_1 \times \alpha$, $V_2 := \alpha \times B$.
As $A_1 \le_{\alpha e} A_0$, so $V_1 \le_{\alpha e} A_0 \land V_2 \le_{\alpha e} B$. Hence by theorem \ref{thm_k_pair_correspondence_ae} (i implies iv), $\exists W_1 \in \Sigma_1(L_\alpha)$ st $V_1 \cap V_2 \subseteq W_1 \subseteq V_1 \cup V_2$.
Therefore $V_1 \cap V_2 = A_1 \times B \subseteq W_1$.
Also $W_1 \subseteq V_1 \cup V_2 \iff \overline V_1 \cap \overline V_2 \subseteq \overline W_1$ and so
$\overline V_1 \cap \overline V_2 = (\overline A_1 \times \alpha) \cap (\alpha \times \overline B) = \overline A_1 \times \overline B \subseteq \overline W_1$.
Hence $\kpair{A_1}{B}$.

Let $\kpair{A_0}{B} \land \kpair{A_1}{B}$. If $A_i = \emptyset$ for $i \in 2$ then $A_0 \oplus A_1 \equiv_{\alpha e} A_{1-i}$ and so $\kpair{A_0 \oplus A_1}{B}$. Otherwise $\kpair{A_0 \oplus A_1}{B}$ by lemma \ref{lemma_k_pair_distributivity}.
\end{proof}

%done1, 27.3.2015
\begin{cor}\label{cor_k_pair_definability}(Definability of a Kalimullin Pair)\footnote{The case for $\alpha=\omega$ proved in \cite{sh2003definability}.}\\
Let $\alpha^* = \omega$ or assume $V=L$ and let $\alpha$ be an infinite regular cardinal. Then:\\
$\forall a, b \in \mathcal{D}_{\alpha e} [\kpair{a}{b} \iff
\forall x \in \mathcal{D}_{\alpha e}. (a \vee x) \wedge (b \vee x) = x]$.
\end{cor}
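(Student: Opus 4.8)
The plan is to obtain this corollary as the degree-theoretic reformulation of the equivalence i) $\iff$ v) in \cref{thm_k_pair_correspondence_ae}, specialised to $U = \emptyset$. First I would note that a Kalimullin pair $\kpair{A}{B}$ is literally an $\emptyset$-Kalimullin pair $\ukpair{\emptyset}{A}{B}$: the witness requirement $W \in \Sigma_1(L_\alpha)$ is exactly the condition $W \le_{\alpha e} \emptyset$. Hence setting $U = \emptyset$ in \cref{thm_k_pair_correspondence_ae}, statement i) becomes precisely $\kpair{A}{B}$, and $K(U) = K(\emptyset)$ is $\Sigma_1(L_\alpha)$.

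Next I would translate statement v) into the language of the corollary. Since $\emptyset \in \Sigma_1(L_\alpha)$ we have $X \oplus \emptyset \equiv_{\alpha e} X$ for every $X$, so writing $a = \aedeg{A}$, $b = \aedeg{B}$, $x = \aedeg{X}$ gives $\aedeg{A \oplus X \oplus U} = a \vee x$, $\aedeg{B \oplus X \oplus U} = b \vee x$, and $\aedeg{X \oplus U} = x$. Thus v) reads $x = (a \vee x) \wedge (b \vee x)$ for all $x \in \mathcal{D}_{\alpha e}$, which is exactly the right-hand side of the corollary. The relation $\kpair{a}{b}$ is well-defined on degrees because by \cref{prop_kalimullin_pair_closure} the class of $A$ with $\kpair{A}{B}$ is closed downward under $\le_{\alpha e}$, hence invariant under $\equiv_{\alpha e}$; together with the symmetry of the $\mathcal{K}$-pair relation this gives invariance in both arguments, and v) is manifestly degree-invariant.

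It then remains to verify, in each of the two hypotheses of the corollary, the megaregularity side-conditions that secure the full equivalence i) $\iff$ v) in \cref{thm_k_pair_correspondence_ae}. If $\alpha^* = \omega$, then since $U = \emptyset \in \Sigma_1(L_\alpha)$, \cref{cor_mr_deg_invariance}v gives that $U^+$ is megaregular, so the first disjunct of the theorem's hypothesis holds and i) $\iff$ v) follows. If instead $\alpha$ is an infinite regular cardinal under $V = L$, I would instead secure the second disjunct by showing that \emph{every} subset of $\alpha$ is megaregular, so that in particular $A \oplus B \oplus K(\emptyset)$ is.

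The hard part will be this last verification, which I expect to be the main obstacle; it rests on two standard facts about $L$ under $V=L$: every subset of an ordinal $\gamma$ lies in $L_{|\gamma|^+}$ by condensation, and $|\gamma|^+ \le \alpha$ whenever $\gamma < \alpha$ and $\alpha$ is a cardinal. For regularity of an arbitrary $A \subseteq \alpha$, each initial segment $A \cap \gamma$ is a subset of $\gamma < \alpha$ and so lies in $L_{|\gamma|^+} \subseteq L_\alpha$. For hyperregularity, given a $\Sigma_1(L_\alpha, \mathcal{B})$-definable function $f$ and $K \in L_\alpha$, the image $f[K]$ has cardinality at most $|K| < \alpha$, so by regularity of the cardinal $\alpha$ its supremum is some $\beta < \alpha$; being a subset of $\beta$ it lies in $L_{|\beta|^+} \subseteq L_\alpha$, which is precisely the replacement condition defining megaregularity. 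Hence every set, and in particular $A \oplus B \oplus K(\emptyset)$, is megaregular, the second disjunct holds, and i) $\iff$ v) follows, completing the proof in both cases.
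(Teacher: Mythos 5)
Your proposal is correct and follows essentially the same route as the paper: specialise the equivalence (i) $\iff$ (v) of \cref{thm_k_pair_correspondence_ae} to $U=\emptyset$, and use \cref{prop_kalimullin_pair_closure} to see that being a $\mathcal{K}$-pair is invariant under $\equiv_{\alpha e}$, hence a degree-theoretic property. You are in fact more careful than the paper's own two-line proof, which only records that $U^+=\emptyset^+$ is megaregular (sufficient only for the $\alpha^*=\omega$ disjunct of the theorem's hypothesis); your condensation argument that under $V=L$ every subset of an infinite regular cardinal $\alpha$ is megaregular is precisely the fact the paper leaves implicit for the second disjunct (it is asserted without proof only in the proof of \cref{cor_uk_pair_definability}), so your write-up closes that gap correctly.
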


\begin{proof}
Note that if $U \in \Sigma_1(L_\alpha)$, then $U^+$ is megaregular by \cref{cor_mr_deg_invariance}.
Thus the statement above follows from $(i \iff v)$ in \cref{thm_k_pair_correspondence_ae} and from the $\mathcal{K}$-pair being a degree theoretic property by its invariance under the $\alpha e$-reducibility by proposition \ref{prop_kalimullin_pair_closure}.
\end{proof}

\begin{cor}\label{cor_uk_pair_definability}(Definability of an $U$-Kalimullin Pair)\\
Assume $V=L$ and let $\alpha$ be an infinite regular cardinal. Then:\\
$\forall a, b, u \in \mathcal{D}_{\alpha e} [\ukpair{u}{a}{b} \iff
\forall x \in \mathcal{D}_{\alpha e}. (a \vee x \vee u) \wedge (b \vee x \vee u) = x \vee u]$.
\end{cor}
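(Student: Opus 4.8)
The plan is to mirror the proof of \cref{cor_k_pair_definability} and reduce everything to the equivalence (i) $\iff$ (v) of \cref{thm_k_pair_correspondence_ae}. First I would translate the degree-theoretic right-hand side into its set-theoretic form. Choosing representatives $A \in a$, $B \in b$, $U \in u$ and recalling that the join $\vee$ on $\mathcal{D}_{\alpha e}$ is induced by $\oplus$, the condition $\forall x \in \mathcal{D}_{\alpha e}.\ (a \vee x \vee u) \wedge (b \vee x \vee u) = x \vee u$ is, term by term, the assertion $\forall X \subseteq \alpha.\ \aedeg{X \oplus U} = \aedeg{A \oplus X \oplus U} \wedge \aedeg{B \oplus X \oplus U}$, which is exactly statement (v) of \cref{thm_k_pair_correspondence_ae}. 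Hence it suffices to show that, under the present hypotheses, the equivalence (i) $\iff$ (v) applies to these representatives.

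The obstacle, and the reason the statement is restricted to regular cardinals rather than also covering $\alpha^* = \omega$ as in \cref{cor_k_pair_definability}, is that $u$ is now an arbitrary degree, so $U^+$ need not be megaregular and the first hypothesis of \cref{thm_k_pair_correspondence_ae} is unavailable. I would therefore aim to verify the second hypothesis, that $A \oplus B \oplus K(U)$ is megaregular. The key step is the lemma that under $V=L$, if $\alpha$ is an infinite regular cardinal then \emph{every} $\mathcal{B} \subseteq \alpha$ (and each of $\mathcal{B}^+,\mathcal{B}^-$) is megaregular. To see this, take a $\Sigma_1(L_\alpha, \mathcal{B})$-definable function $f$ and $K \in L_\alpha$; since $K \in L_\alpha$ and $\alpha$ is a cardinal, $|K| < \alpha$, so the image $f[K]$ has cardinality $< \alpha$ and, as $\alpha$ is a regular cardinal, $f[K]$ is bounded by some $\beta < \alpha$. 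Under $V=L$ every bounded subset of $\alpha$ is $\alpha$-finite, because $\mathcal{P}(\gamma) = \mathcal{P}(\gamma) \cap L \subseteq L_{|\gamma|^{+}} \subseteq L_\alpha$ for each $\gamma < \alpha$; hence $f[K] \in L_\alpha$, which is precisely the replacement condition defining megaregularity. I expect this bounded-subset computation in $L$ to be the main point to get right, as it is where the $V=L$ and regular-cardinal assumptions are genuinely used.

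With this lemma in hand the conclusion is immediate: $A \oplus B \oplus K(U)$ is megaregular, so by \cref{thm_k_pair_correspondence_ae} the statements (i)--(v) are all equivalent for the chosen representatives, giving $\ukpair{U}{A}{B} \iff$ (v). Finally I would observe that (v) is manifestly invariant under $\equiv_{\alpha e}$ in each of $A$, $B$, $U$, so the truth value of $\ukpair{U}{A}{B}$ depends only on $a,b,u$; this both shows that $\ukpair{u}{a}{b}$ is a well-defined degree-theoretic property (playing the role that \cref{prop_kalimullin_pair_closure} plays in the unrelativised case) and lets the equivalence descend to $\mathcal{D}_{\alpha e}$, yielding the displayed biconditional.
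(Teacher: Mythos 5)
Your proposal is correct and takes essentially the same route as the paper: the paper's proof also reduces the corollary to the equivalence (i) $\iff$ (v) of \cref{thm_k_pair_correspondence_ae} by noting that $A \oplus B \oplus K(U)$ is megaregular when $\alpha$ is an infinite regular cardinal (a fact it asserts without proof, and for which you supply exactly the intended $V=L$ bounded-subset argument). The only cosmetic difference is that the paper invokes \cref{prop_kalimullin_pair_closure} for degree-invariance, whereas you obtain it from the manifest $\equiv_{\alpha e}$-invariance of statement (v); both are adequate.
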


\begin{proof}
Note that since $\alpha$ is an infinite regular cardinal, so $A \oplus B \oplus K(U)$ is megaregular.
Thus the statement above follows from $(i \iff v)$ in \cref{thm_k_pair_correspondence_ae} and from the $\mathcal{K}$-pair being a degree theoretic property by its invariance under the $\alpha e$-reducibility by proposition \ref{prop_kalimullin_pair_closure}.
\end{proof}

%%%%%%%%%%%%%%%%%%%%%%%%%%%%%%%
\subsection{Maximal Kalimullin pair and total degrees}%%%%%%
%%%%%%%%%%%%%%%%%%%%%%%%%%%%%%%

%done1, 26.3.2015
%relies on unverified prop_kalimullin_pair_closure, prop_ch2_cai1.8.
\begin{prop}\label{lemma_sc_max_k_pair}(Maximality of semicomputable megaregular Kalimullin pairs)\footnote{Generalized from Maximal $\mathcal{K}$-pairs in \cite{caidefining} for $\alpha=\omega$.}\\
Let $A \subseteq \alpha$ and let $A^+$ and $A^-$ be both megaregular.
If $\kpair{A}{\overline A} \land A \not\in \Sigma_1(L_\alpha) \land A \not\in \Pi_1(L_\alpha)$, then $\kpairmax{A}{\overline{A}}$.
\end{prop}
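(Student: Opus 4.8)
The plan is to unfold the definition of maximality directly. Writing $B:=\overline{A}$, it suffices to fix arbitrary $C, D \subseteq \alpha$ with $A \le_{\alpha e} C$, $\overline{A} \le_{\alpha e} D$ and $\kpair{C}{D}$, and to show $C \equiv_{\alpha e} A$ and $D \equiv_{\alpha e} \overline{A}$. The forward inequalities $A \le_{\alpha e} C$ and $\overline{A} \le_{\alpha e} D$ are handed to us, so the entire content is to produce the reverse reductions $C \le_{\alpha e} A$ and $D \le_{\alpha e} \overline{A}$. First I would record that neither $C$ nor $D$ is $\alpha$-c.e.: since $\Sigma_1(L_\alpha)$ is closed downwards under $\le_{\alpha e}$ (if $C \in \Sigma_1(L_\alpha)$ then ``$K_\delta \subseteq C$'' is a bounded quantification over the $\alpha$-finite use, hence $\Sigma_1$, so any $A \le_{\alpha e} C$ is $\Sigma_1$), the hypothesis $A \not\in \Sigma_1(L_\alpha)$ together with $A \le_{\alpha e} C$ forces $C \not\in \Sigma_1(L_\alpha)$, and $\overline{A} \not\in \Sigma_1(L_\alpha)$ (which is exactly $A \not\in \Pi_1(L_\alpha)$) together with $\overline{A} \le_{\alpha e} D$ forces $D \not\in \Sigma_1(L_\alpha)$.

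The next step is to manufacture two mixed Kalimullin pairs by replacing one coordinate of $\kpair{C}{D}$ at a time. By \cref{prop_kalimullin_pair_closure} the class $\{A' : \kpair{A'}{D}\}$ is closed downwards under $\le_{\alpha e}$; since $\kpair{C}{D}$ holds and $A \le_{\alpha e} C$, this yields $\kpair{A}{D}$. Symmetrically, using that the Kalimullin-pair relation is symmetric in its two arguments (immediate from its definition, transposing the witness) and that $\{B' : \kpair{C}{B'}\}$ is closed downwards, from $\kpair{C}{D}$ and $\overline{A} \le_{\alpha e} D$ I obtain $\kpair{C}{\overline{A}}$. Both mixed pairs consist of non-$\alpha$-c.e. sets by the previous paragraph together with the hypotheses $A \not\in \Sigma_1(L_\alpha)$ and $\overline{A} \not\in \Sigma_1(L_\alpha)$, so \cref{prop_ch2_cai1.8_cor}ii is applicable to each.

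Finally I would invoke \cref{prop_ch2_cai1.8_cor}ii on the two mixed pairs, and this is precisely where the two megaregularity hypotheses get consumed, one apiece. Applied to $\kpair{A}{D}$ (substituting the corollary's first and second arguments by $A$ and $D$), the clause ``the second member $\le_{\alpha e}$ the complement of the first, provided the first member's negative enumeration is megaregular'' gives $D \le_{\alpha e} \overline{A}$, using megaregularity of $A^-$. Applied to $\kpair{C}{\overline{A}}$ (substituting by $C$ and $\overline{A}$), the clause ``the first member $\le_{\alpha e}$ the complement of the second, provided the second member's negative enumeration is megaregular'' gives $C \le_{\alpha e} \overline{\overline{A}} = A$, since here the relevant negative enumeration $(\overline{A})^-$ is exactly $A^+$, which is megaregular by hypothesis. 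Combining these reverse reductions with the given forward ones yields $C \equiv_{\alpha e} A$ and $D \equiv_{\alpha e} \overline{A}$, which is the definition of $\kpairmax{A}{\overline{A}}$. I expect the only real obstacle to be bookkeeping: the upgrade from the always-available weak reductions of \cref{prop_ch2_cai1.8_cor}i to genuine $\le_{\alpha e}$ reductions is licensed solely by megaregularity, and one must verify that $A^-$ is what governs the $D$-side while $A^+ = (\overline{A})^-$ governs the $C$-side, so that each of the two assumed megaregularities is matched to the correct coordinate.
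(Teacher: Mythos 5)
Your proposal is correct and follows essentially the same route as the paper's own proof: form the mixed pairs $\kpair{A}{D}$ and $\kpair{C}{\overline{A}}$ via the downward closure of \cref{prop_kalimullin_pair_closure}, then apply \cref{prop_ch2_cai1.8_cor}ii twice, consuming megaregularity of $A^-$ for $D \le_{\alpha e} \overline{A}$ and of $A^+ = (\overline{A})^-$ for $C \le_{\alpha e} A$. In fact you are slightly more careful than the paper, which omits the (needed, and easily supplied) verification that $C$ and $D$ are not $\Sigma_1(L_\alpha)$ before invoking the corollary.
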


\begin{proof}
Suppose $\kpair{A}{\overline A}$ and $\kpair{C}{D}$, $A \le_{\alpha e} C$, $\overline A \le_{\alpha e} D$. By \cref{prop_kalimullin_pair_closure} $\kpair{A}{D}$. By \cref{prop_ch2_cai1.8_cor} and megaregularity of $A^-$ we have $D \le_{\alpha e} \overline A$. Similarly, $\kpair{\overline A}{C}$ and thus $C \le_{\alpha e} \overline{\overline A} = A$ by the megaregularity of $A^+$.
\end{proof}

%done2
\begin{cor}\label{thm_total_deg_join_of_max_k}
Let $\alpha^* = \omega$ or assume $V=L$ and let $\alpha$ be an infinite regular cardinal.
Then every nontrivial total megaregular degree is a join of a maximal $\mathcal{K}$-pair, i.e.\\
$\forall a \in \mrtotalaedegs - \{0\} \exists b, c \in \mathcal{D}_{\alpha e} [(a = b \vee c) \land \kpairmax{b}{c}]$.
\end{cor}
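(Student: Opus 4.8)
The plan is to realize $a$ as the join of an $\alpha$-semicomputable set and its complement, and then to upgrade the resulting Kalimullin pair to a maximal one via \cref{lemma_sc_max_k_pair}. First I would fix a representative: since $a$ is total, choose $B \subseteq \alpha$ with $a = \aedeg{B \oplus \overline{B}}$. Because $a$ is megaregular, the set $B \oplus \overline{B}$ is megaregular, whence $B$ is megaregular by \cref{cor_mr_deg_invariance}, and in particular quasiregular. Nontriviality of $a$ forces $B >_\alpha 0$, since $B \equiv_\alpha 0$ would give $a = 0$.

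Next I would extract a semicomputable set of the same $\alpha$-degree. Applying \cref{thm_jockusch_strongly_non-re_semicomputable} to the quasiregular set $B >_\alpha 0$ produces an $\alpha$-semicomputable $A$ with $A \equiv_\alpha B$, $A \not\in \Sigma_1(L_\alpha)$ and $A \not\in \Pi_1(L_\alpha)$. Since $A$ is $\alpha$-semicomputable, \cref{prop_sc_pair_kalimullin} gives $\kpair{A}{\overline{A}}$.

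The core step is to invoke \cref{lemma_sc_max_k_pair}, whose hypotheses are $A \not\in \Sigma_1(L_\alpha)$ and $A \not\in \Pi_1(L_\alpha)$ (already secured) together with megaregularity of both $A^+$ and $A^-$. To obtain the latter, note that $A \equiv_\alpha B$ with $B$ megaregular gives, by \cref{cor_mr_deg_invariance}, that $A$ is megaregular, i.e. $\langle L_\alpha, A \rangle$ is admissible. Since every $\Sigma_1(L_\alpha, A^+)$ formula (and likewise every $\Sigma_1(L_\alpha, A^-)$ formula) is a special case of a $\Sigma_1(L_\alpha, A)$ formula, the admissibility of $\langle L_\alpha, A \rangle$ descends to $\langle L_\alpha, A^+ \rangle$ and $\langle L_\alpha, A^- \rangle$, so both $A^+$ and $A^-$ are megaregular. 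Hence \cref{lemma_sc_max_k_pair} yields $\kpairmax{A}{\overline{A}}$.

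Finally I would set $b := \aedeg{A}$ and $c := \aedeg{\overline{A}}$. The join in $\mathcal{D}_{\alpha e}$ is $b \vee c = \aedeg{A \oplus \overline{A}}$, and $A \equiv_\alpha B$ gives $A \oplus \overline{A} \equiv_{\alpha e} B \oplus \overline{B}$, so $b \vee c = a$. Because the maximal $\mathcal{K}$-pair property is defined through $\le_{\alpha e}$ and $\equiv_{\alpha e}$ and is therefore invariant under $\alpha e$-equivalence (cf. the closure properties of \cref{prop_kalimullin_pair_closure}), the set-level statement $\kpairmax{A}{\overline{A}}$ descends to the degree-level statement $\kpairmax{b}{c}$, completing the argument. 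I expect the main obstacle to be the megaregularity bookkeeping of the third paragraph: one must track that the abstract megaregularity carried by the total degree $a$ transfers, through the $\alpha$-semicomputable representative $A$, into the two separate one-sided hypotheses on $A^+$ and $A^-$ demanded by \cref{lemma_sc_max_k_pair}. The standing hypothesis on $\alpha^*$ or on $\alpha$ being an infinite regular cardinal serves precisely to keep these megaregularity conditions available.
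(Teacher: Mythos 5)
Your proof is correct and takes essentially the same route as the paper's: \cref{thm_jockusch_strongly_non-re_semicomputable} to produce an $\alpha$-semicomputable representative $A$ with $A \oplus \overline{A} \in a$, \cref{prop_sc_pair_kalimullin} to get $\kpair{A}{\overline{A}}$, and \cref{lemma_sc_max_k_pair} for maximality; indeed you check the hypotheses (quasiregularity of $B$, $B >_\alpha 0$, and the descent of megaregularity from $A$ to $A^+$ and $A^-$) more explicitly than the paper does. The one slip is your closing remark on the role of the standing hypothesis: the megaregularity conditions come entirely from the assumption that $a$ is a megaregular degree, and the hypothesis that $\alpha^* = \omega$ or $\alpha$ is an infinite regular cardinal is instead used by the paper (via \cref{cor_k_pair_definability}) to ensure the (maximal) Kalimullin pair is definable in $\mathcal{D}_{\alpha e}$ --- a point about the significance of the corollary rather than a step needed for its literal statement.
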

\begin{proof}
Since $\alpha^* = \omega$ or $\alpha$ is an infinite regular cardinal, thus a (maximal) Kalimullin pair is definable by \cref{cor_k_pair_definability}.

Suppose $a \in \mathcal{TOT}_{\alpha e} - \{0\}$ and $a$ is a megaregular degree (at least one or equivalently every set in $a$ is megaregular).
Then by theorem \ref{thm_jockusch_strongly_non-re_semicomputable}, there is $A \subseteq \alpha$ st $A$ $\alpha$-semicomputable, $A \not\in \Sigma_1(L_\alpha)$, $\overline A \not\in \Sigma_1(L_\alpha)$ and $A \oplus \overline{A} \in a$ by the totality of $a$.
As $A$ $\alpha$-semicomputable, so $\kpair{A}{\overline A}$ by proposition \ref{prop_sc_pair_kalimullin}.
$\kpair{A}{\overline{A}}$ is nontrivial since $A \not\in \Sigma_1(L_\alpha)$ and $\overline{A} \not\in \Sigma_1(L_\alpha)$.
Thus by \cref{lemma_sc_max_k_pair} and the megaregularity of $A$ we have $\kpairmax{A}{\overline A}$.
\end{proof}

By inspecting whether a degree which is not quasiregular could be a join of a maximal Kalimullin pair, one may establish the following:

\begin{prop}
If $\adeg{B}$ is not a quasiregular degree, then there is $C$ st $0 <_\alpha C <_\alpha B$ and $\kpairmax{C}{\overline{C}}$.
\end{prop}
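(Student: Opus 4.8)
The plan is to produce, strictly below $\adeg{B}$, a nonzero set $C$ that is $\alpha$-semicomputable, megaregular on both sides, and neither $\Sigma_1(L_\alpha)$ nor $\Pi_1(L_\alpha)$; then $\kpair{C}{\overline{C}}$ holds by \cref{prop_sc_pair_kalimullin} and \cref{lemma_sc_max_k_pair} upgrades it to $\kpairmax{C}{\overline{C}}$. First I would record that a non-quasiregular degree is nonzero: an $\alpha$-computable set is megaregular by \cref{cor_mr_deg_invariance} and hence quasiregular, so $B \equiv_\alpha 0$ is impossible and $B >_\alpha 0$. Moreover every set in $\adeg{B}$, in particular $B$ itself, fails quasiregularity, so there is some $\gamma < \mathrm{sup}(B)$ with $B \cap \gamma \notin L_\alpha$.

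Next I would extract a nonzero \emph{quasiregular} set strictly below $B$ as a stepping stone. Let $\gamma_0$ be the least ordinal with $B \cap \gamma_0 \notin L_\alpha$ and put $D := B \cap \gamma_0$. A successor value of $\gamma_0$ is impossible (adjoining at most a singleton to an $\alpha$-finite initial segment stays $\alpha$-finite), so $\gamma_0$ is a limit, every proper initial segment $D \cap \gamma = B \cap \gamma$ with $\gamma < \gamma_0$ lies in $L_\alpha$, and $\mathrm{sup}(D) = \gamma_0$; hence $D$ is quasiregular. Since $D$ is bounded by $\gamma_0 < \alpha$ yet $D \notin L_\alpha$, \cref{prop_alpha_fin_iff_bounded_and_alpha_comp_with_oracle} gives $D \notin \Delta_1(L_\alpha)$, i.e. $D >_\alpha 0$. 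Plainly $D \le_\alpha B$, and this is strict: were $B \equiv_\alpha D$, the degree $\adeg{B} = \adeg{D}$ would contain the quasiregular set $D$, contradicting the hypothesis. Thus $0 <_\alpha D <_\alpha B$.

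The hard part is upgrading from a quasiregular witness to a maximal Kalimullin pair, since \cref{lemma_sc_max_k_pair} requires $C^+$ and $C^-$ megaregular, whereas any $C \equiv_\alpha D$ inherits the non-megaregularity of the bounded set $D$: the increasing enumeration of $D$ is a $\Sigma_1(L_\alpha, D)$ map with $\alpha$-finite domain $\mathrm{ot}(D) < \alpha$ and image $D \notin L_\alpha$, so $\langle L_\alpha, D\rangle$ is inadmissible. Hence a bounded witness cannot serve, and the genuine task is to produce a nonzero megaregular—necessarily unbounded—set $E$ with $0 <_\alpha E <_\alpha B$. Given such an $E$, \cref{cor_mr_deg_invariance} preserves megaregularity along $\equiv_\alpha$, \cref{thm_jockusch_strongly_non-re_semicomputable} supplies an $\alpha$-semicomputable $C \equiv_\alpha E$ that is neither $\Sigma_1(L_\alpha)$ nor $\Pi_1(L_\alpha)$, and \cref{prop_sc_pair_kalimullin} with \cref{lemma_sc_max_k_pair} deliver $\kpairmax{C}{\overline{C}}$ with $0 <_\alpha C <_\alpha B$. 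I expect the existence of this unbounded megaregular $E$ below a non-quasiregular degree to be the main obstacle: it is exactly where the weak reductions $C' \le_{w\alpha e} C$ and $D' \le_{w\alpha e} \overline{C}$ arising from \cref{prop_ch2_cai1.8_cor} must be promoted to honest $\le_{\alpha e}$ reductions. I would settle it either by a direct construction of $E$ below $B$, or—when $\alpha^* = \omega$—by locating a nontrivial total megaregular degree below $\adeg{B}$ and invoking \cref{thm_total_deg_join_of_max_k}, whose proof already manufactures a complementary maximal pair of the required form.
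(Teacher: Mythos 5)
Your opening two paragraphs are precisely the paper's own proof. The paper likewise passes from ``$\adeg{B}$ is not quasiregular'' to ``$B$ is not quasiregular'', takes the least $\beta$ with $B \cap \beta \notin L_\alpha$, sets $A := B \cap \beta$, and observes that $A$ is quasiregular, bounded but not $\alpha$-finite (hence $0 <_\alpha A$), and not of the same degree as $B$ because $\adeg{B}$ contains no quasiregular set; if anything, you are more explicit than the paper about the strict inequality $A <_\alpha B$. The divergence is at the last step: the paper simply applies \cref{thm_jockusch_strongly_non-re_semicomputable} to $A$, obtaining a semicomputable $C \equiv_\alpha A$ with $C \notin \Sigma_1(L_\alpha)$ and $C \notin \Pi_1(L_\alpha)$, and then cites \cref{prop_sc_pair_kalimullin} and \cref{lemma_sc_max_k_pair} to conclude $\kpairmax{C}{\overline{C}}$ --- without ever addressing the megaregularity hypotheses of \cref{lemma_sc_max_k_pair}. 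Your objection at this point is substantive, not a misunderstanding: $A$ is bounded and not $\alpha$-finite, hence not regular and so not megaregular, and by \cref{cor_mr_deg_invariance}(iv) no $C \equiv_\alpha A$ is megaregular either. (Strictly speaking, \cref{lemma_sc_max_k_pair} asks only that $C^+$ and $C^-$ be megaregular, a condition formally weaker than megaregularity of $C$, so your claim that a bounded witness \emph{cannot} serve is itself a slight overstatement; but the paper makes no attempt to verify even this weaker condition, and its parallel application in \cref{thm_total_deg_join_of_max_k} explicitly routes through full megaregularity.) So you have correctly located the soft spot in the paper's argument rather than missed its idea.

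The genuine gap is that your proposal then stops. Everything is reduced to producing a nonzero megaregular set $E \le_\alpha B$ (megaregular sets being quasiregular, such an $E$ would automatically be strictly below $B$), or alternatively a nontrivial total megaregular degree below $\adeg{B}$ to feed into \cref{thm_total_deg_join_of_max_k}; you offer no construction of either and flag this yourself as the main obstacle. Nothing in the paper supplies such an $E$: the only set the argument ever produces below a non-quasiregular $B$ is the bounded set $A$, whose entire $\alpha$-degree consists of non-megaregular sets by the very invariance you invoke, and it is not at all clear that an arbitrary non-quasiregular degree bounds any nonzero megaregular degree. So as written, your proposal proves the proposition only modulo an unproven (and dubious) existence claim, i.e.\ it is incomplete --- with the mitigating remark that the paper's own three-line conclusion is unjustified at exactly the point where you halted, so a fully rigorous proof would require either verifying the signed megaregularity hypotheses for the specific witness $C$ produced from $A$, or carrying out the construction you left open.
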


\begin{proof}
Since $\adeg{B}$ is not a quasiregular degree, then $D$ is not quasiregular for any $D \equiv_\alpha B$.
So $B$ is not quasiregular.

Let $\beta < \alpha$ be the least ordinal st $B \cap \beta \not\in L_\alpha$.
Define $A := B \cap \beta$.
Then $A \subset B$ by $B$ not being quasiregular.
By the minimality of $\beta$, the set $A$ is quasiregular.
$A$ is bounded, but not $\alpha$-finite, hence $A$ cannot be $\alpha$-computable.
Thus $A >_\alpha \emptyset$.
By \cref{thm_jockusch_strongly_non-re_semicomputable} there is $\alpha$-semicomputable set $C$ st $A \equiv_\alpha C$, $C \not\in \Sigma_1(L_\alpha)$ and $C \not\in \Pi_1(L_\alpha)$.
As $C$ is $\alpha$-semicomputable, so $\kpair{C}{\overline{C}}$.
By \cref{lemma_sc_max_k_pair} we have that $\kpairmax{C}{\overline{C}}$.
\end{proof}

\section{Acknowledgements}
The author would like to thank Mariya Soskova and Hristo Ganchev for the explanation of the proof of the Kalimullin pair definability in the classical case, i.e. $\alpha = \omega$.

The author was supported by Hausdorff Research Institute for Mathematics during Hausdorff Trimester Program \emph{Types, Sets and Constructions}.

\bibliographystyle{plain}
\bibliography{References/references} % References file

\begin{thebibliography}{10}

\bibitem{caidefining}
Mingzhong Cai, Hristo~A Ganchev, Steffen Lempp, Joseph~S. Miller, and Mariya~I.
  Soskova.
\newblock Defining totality in the enumeration degrees.

\bibitem{chong1984techniques}
C-T Chong.
\newblock Techniques of admissible recursion theory.
\newblock 1984.

\bibitem{di1983basic}
Robert~A Di~Paola.
\newblock The basic theory of partial $\alpha$-recursive operators.
\newblock {\em Annali di Matematica Pura ed Applicata}, 134(1):169--199, 1983.

\bibitem{jockusch1968semirecursive}
Carl~G Jockusch.
\newblock Semirecursive sets and positive reducibility.
\newblock {\em Transactions of the American Mathematical Society}, pages
  420--436, 1968.

\bibitem{koepke2005turing}
Peter Koepke.
\newblock Turing computations on ordinals.
\newblock {\em Bulletin of Symbolic Logic}, 11(03):377--397, 2005.

\bibitem{koepke2007alpha}
Peter Koepke.
\newblock $\alpha$-recursion theory and ordinal computability.
\newblock {\em BIWOC}, page~48, 2007.

\bibitem{koepke2009ordinal}
Peter Koepke.
\newblock Ordinal computability.
\newblock In {\em Mathematical Theory and Computational Practice}, pages
  280--289. Springer, 2009.

\bibitem{koepke_seyfferth2009ordinal}
Peter Koepke and Benjamin Seyfferth.
\newblock Ordinal machines and admissible recursion theory.
\newblock {\em Annals of Pure and Applied Logic}, 160(3):310--318, 2009.

\bibitem{koepke2008register}
Peter Koepke and Ryan Siders.
\newblock Register computations on ordinals.
\newblock {\em Archive for Mathematical Logic}, 47(6):529--548, 2008.

\bibitem{maass1978contributions}
Wolfgang Maass.
\newblock Contributions to [alpha]-and [beta]-recursion theory.
\newblock 1978.

\bibitem{sacks1963degrees}
Gerald~E Sacks.
\newblock {\em Degrees of unsolvability}.
\newblock Number~55. Princeton University Press, 1963.

\bibitem{sacks1990higher}
Gerald~E Sacks.
\newblock Higher recursion theory.
\newblock 1990.

\bibitem{selman1971arithmetical}
Alan~L Selman.
\newblock Arithmetical reducibilities i.
\newblock {\em Mathematical Logic Quarterly}, 17(1):335--350, 1971.

\bibitem{sh2003definability}
I~Sh.~Kalimullin.
\newblock Definability of the jump operator in the enumeration degrees.
\newblock {\em Journal of Mathematical Logic}, 3(02):257--267, 2003.

\bibitem{shore1975splitting}
Richard~A Shore.
\newblock Splitting an 𝛼-recursively enumerable set.
\newblock {\em Transactions of the American Mathematical Society}, 204:65--77,
  1975.

\bibitem{soare1987recursively}
Robert~I Soare.
\newblock {\em Recursively enumerable sets and degrees: A study of computable
  functions and computably generated sets}.
\newblock Springer Science \& Business Media, 1987.

\bibitem{takeuti1965recursive}
GAISI Takeuti.
\newblock {\em Recursive functions and arithmetical functions of ordinal
  numbers}.
\newblock North-Holland, 1965.

\end{thebibliography}
\addcontentsline{toc}{chapter}{References} % Adds References to contents page

\end{document}